\theoremstyle{theorem}
\newtheorem{theorem}{Theorem}[section]
\newtheorem{lemma}[theorem]{Lemma}
\newtheorem{proposition}[theorem]{Proposition}
\newtheorem{corollary}[theorem]{Corollary}
\newtheorem*{thmA}{Theorem \ref{thm:adj}}
\newtheorem*{thmB}{Theorem \ref{thm:hearing_angles}}
\newtheorem*{thmC}{Theorem \ref{thm:no_finite}}
\theoremstyle{definition}
\newtheorem{observation}[theorem]{Observation}
\theoremstyle{definition}
\theoremstyle{definition}
\newtheorem{remark}[theorem]{Remark}
\theoremstyle{definition}
\newtheorem{example}[theorem]{Example}
\theoremstyle{definition}
\newtheorem{definition}[theorem]{Definition}
\theoremstyle{definition}
\theoremstyle{question}
\newtheorem{question}[theorem]{Question}
\theoremstyle{question}
\theoremstyle{definition}
\def\L{\mathcal{L}}
\def\B{{\sf B}}
\def\N{{\mathbb N}}
\def\C{{\mathbb C}}
\def\R{{\mathbb R}}
\def\Z{{\mathbb Z}}
\def\D{{\mathcal D}}
\def\A{{\mathcal A}}
\def\ins{\operatorname{in}}
\newcommand{\set}[1]{ { \{ #1 \} } }
\let\@fnsymbol\@arabic
\title{How to hear the shape of a billiard table}
\author{Aaron Calderon\footnote{Department of Mathematics, Yale University, 10 Hillhouse Avenue, New Haven, CT 06511, \textsf{aaron.calderon@yale.edu}} , Solly Coles\footnote{School of Mathematics, University of Bristol, Senate House, Tyndall Avenue, Bristol, BS8 1TH, \textsf{sc14367.2014@my.bristol.ac.uk }} , Diana Davis\footnote{Department of Mathematics and Statistics, Swarthmore College, 500 College Avenue, Swarthmore, PA 19081, \textsf{ddavis3@swarthmore.edu}} , Justin Lanier\footnote{School of Mathematics, Georgia Institute of Technology, 686 Cherry Street, Atlanta, GA 30332,  \textsf{jlanier8@gatech.edu}} , Andre Oliveira\footnote{Department of Mathematics and Computer Science, Wesleyan University, 45 Wyllys Avenue, Middletown, CT 06459, \textsf{aoliveira@wesleyan.edu }}}
\begin{document}

\maketitle

\begin{abstract}
The bounce spectrum of a polygonal billiard table is the collection of all bi-infinite sequences of edge labels corresponding to billiard trajectories on the table. We give methods for reconstructing from the bounce spectrum of a polygonal billiard table both the cyclic ordering of its edge labels and the sizes of its angles. We also show that it is impossible to reconstruct the exact shape of a polygonal billiard table from any finite collection of finite words from its bounce spectrum.
\end{abstract}

\tableofcontents


\section{Introduction}

In this paper we show how to recover geometric information about a polygonal billiard table from the symbolic dynamics of its billiard flow. This can be interpreted as a spectral rigidity result, in the same spirit as the question “Can one hear the shape of a drum?” asked by Kac in a classic 1966 paper \cite{drum}. We give a selection of related results within this tradition in \S 1.1.

Let $P$ be a polygonal billiard table whose edges are labeled by an alphabet $\A$. Given a bi-infinite nonsingular basepointed billiard trajectory $\tau$, let the corresponding {\em bounce sequence} $\B(\tau)$ be the $\Z$-indexed sequence of labels of the edges that the trajectory hits. The {\em bounce spectrum} $\B(P)$ of the polygon $P$ is the set of all sequences $\B(\tau)$ where $\tau$ is such a trajectory on $P$. The set of finite subwords appearing $\B(P)$ is the {\em bounce language} $\L_P$ of the polygon. Additional details about this setup are given in \S 2.

The motivating questions of this paper are: to what extent does $\B(P)$ determine $P$? What geometric information about $P$ can be reconstructed from $\B(P)$? Or more fancifully: what properties of $P$ can we ``hear" in $\B(P)$, and how can we go about doing so?

Our first two main results are that the adjacency of edges in $P$ and the sizes of the angles of $P$ can be reconstructed from $\B(P)$.

\begin{thmA} The adjacency of edges in a polygonal billiard table $P$ can be reconstructed from $\B(P)$.
\end{thmA}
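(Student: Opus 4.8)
The plan is to recover edge adjacency from the combinatorics of how bounce sequences can transition between labels. The key observation is that if two edges $a$ and $b$ are adjacent in $P$, sharing a vertex $v$, then there exist billiard trajectories that bounce off $a$ and then immediately off $b$ (hitting very close to the shared vertex), so the two-letter word $ab$ (and $ba$) appears in $\L_P$. The converse is the crux: I want to show that if the word $ab$ appears in $\L_P$, then the edges $a$ and $b$ must in fact be adjacent. Thus adjacency would be characterized purely language-theoretically as the set of length-two words occurring in $\L_P \subseteq \B(P)$.

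First I would set up the easy direction carefully: for each vertex $v$ of $P$ with incident edges $a,b$, construct an explicit family of trajectories near $v$ realizing consecutive bounces $\dots ab \dots$. One must be slightly careful to use nonsingular trajectories (avoiding the vertex itself) and to confirm such trajectories extend bi-infinitely, but since nearly-grazing trajectories into a corner are generic this should be routine. This shows every adjacency produces a length-two subword.

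Second, and this is where I expect the main difficulty, I would prove that non-adjacent edges never appear consecutively. Naively one hopes that a straight-line billiard segment from edge $a$ to edge $b$ forces $a$ and $b$ to be ``visible'' to each other across the interior, but consecutive bounces off non-adjacent edges certainly do occur in most polygons — a trajectory can cross the interior and hit a far edge. So the simple statement ``$ab \in \L_P \iff a,b$ adjacent'' is false, and the real task is to distinguish \emph{adjacency} from mere \emph{consecutive-visibility} using finer language data. The natural fix is to detect adjacency not through a single pair of labels but through the local branching structure: at a genuine shared vertex, as the impact point slides toward the vertex, the subsequent itinerary changes in a controlled, ``continuous'' way and one can realize bounces arbitrarily close to the vertex, producing a characteristic one-parameter family of words $a b w$ for which the tail $w$ stabilizes. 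By contrast, a chord crossing the interior from $a$ to a non-adjacent $b$ has no such degenerating family. So the plan is to characterize adjacency by the existence of a sequence of words $a b w_n \in \L_P$ whose geometry forces the impact points to accumulate at a common vertex, which I would detect combinatorially via the eventual agreement of the $w_n$ on longer and longer prefixes.

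The hard part, then, is making this combinatorial criterion both necessary and sufficient without reference to the geometry one is trying to reconstruct. I anticipate the argument hinges on a lemma relating convergence of bounce words to convergence of trajectories (a shadowing/continuity statement for the billiard coding), and on ruling out ``accidental'' degenerating families that mimic a corner without one being present. I would isolate that continuity lemma first, then show that accumulation of impact points can only occur at a vertex shared by the two edges, which pins down adjacency exactly. Once adjacency of pairs is known, assembling the full adjacency data of $P$ from the collection of realized length-two words (together with this degeneration criterion) is then bookkeeping.
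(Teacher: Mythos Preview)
Your instinct to detect adjacency via families of words with converging tails is exactly the engine the paper uses, but the specific criterion you propose has a genuine gap. Looking for words of the form $abw_n$ with stabilizing tails breaks down at reflex vertices: if $A$ and $B$ meet at an angle $\ge \pi$, then no billiard segment goes directly from $A$ to $B$, so $AB \notin \L_P$ at all, and your criterion cannot even get started. The paper's fix is to decouple the two edges: rather than asking for $ABw_n$, it asks whether $A$ and $B$ can each be \emph{prepended} to the \emph{same} forward tail, i.e., whether $(A,E_1,E_2,\ldots)$ and $(B,E_1,E_2,\ldots)$ both lie in $\overline{\B_+(P)}$ for some $(E_i)$. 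The geometric picture is a nonsingular ray emanating from the shared vertex, approximated from either side by trajectories starting on $A$ and on $B$; this works regardless of the angle at the vertex. The collinearity lemma you anticipate (convergence of words forces convergence of trajectories) then shows that such ``common prefixes'' force the limiting basepoints to lie on a common line through the interior.

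Even with this corrected criterion, your worry about ``accidental degenerating families'' is real and requires more than you sketch. In a non-convex polygon, a ray from a reflex vertex $p$ can be approximated from one side by trajectories starting on an edge $B$ incident to $p$, and from the other side by trajectories that first hit some far-away edge $A$ before tracking the ray; then $\{A,B\}$ are common prefixes but not adjacent. The paper isolates exactly these spurious cases with a combinatorial notion it calls \emph{grazing}: roughly, the limiting ideal trajectory extends backwards through the vertex and picks up an extra edge label on the other side. Adjacency is then characterized as ``$\{A,B\}$ are common prefixes for some \emph{non-grazing} sequence,'' and the proof is a case analysis on the angle of the second edge at the candidate vertex. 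Finally, since all of this is phrased in terms of one-sided tails in $\overline{\B_+(P)}$, the paper must (and does) show that $\B(P)$ determines $\B_+(P)$; this is a separate argument you have not addressed.
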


\begin{thmB} The angles of a polygonal billiard table $P$ can be reconstructed from $\B(P)$.
\end{thmB}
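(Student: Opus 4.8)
The plan is to localize the problem to a single corner and then read off the angle from the combinatorics of trajectories that graze that corner. First I would invoke Theorem~\ref{thm:adj} to recover the cyclic ordering of the edge labels; this tells me exactly which pairs of labels $\{a,b\}$ are adjacent, i.e.\ meet at a common vertex $v$, and it lets me recognize, inside any word of $\L_P$, the maximal runs $abab\cdots$ in which the trajectory bounces only between the two edges of a single corner. The angle $\theta$ at $v$ will then be extracted entirely from the statistics of these runs.

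The geometric input is the standard unfolding of billiards in a wedge: reflecting the corner repeatedly across its walls sends the bouncing trajectory to a straight line crossing a fan of rays emanating from $v$ at the angles $0,\theta,2\theta,\dots$. A trajectory that makes $j$ consecutive bounces between $a$ and $b$ corresponds to a line whose direction-interval of length $\pi$ contains exactly $j$ of these rays. A short computation then shows that $j$ can take only the two values $\lfloor \pi/\theta\rfloor$ and $\lceil \pi/\theta\rceil$, that the maximum $M:=\lceil \pi/\theta\rceil$ is realized, and --- crucially --- that the set of entry directions producing the maximal run $M$ is a subinterval of proportion exactly $\{\pi/\theta\}$ of all entry directions into the corner. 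Thus the maximal run-length, visible in $\L_P$ as the longest admissible alternating word in $a,b$, already recovers the integer $\lfloor\pi/\theta\rfloor = M-1$ and pins $\theta$ down to an interval.

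The main obstacle is upgrading this integer bound to the exact value of $\theta$, i.e.\ recovering the fractional part $\{\pi/\theta\}$. The idea is that this fraction is recorded as a word frequency. Along a billiard orbit that equidistributes in direction --- which exists for almost every direction --- the proportion of corner-visits that achieve the maximal run $M$ equals the length-proportion $\{\pi/\theta\}$ of the favorable entry directions computed above. Such frequencies of subwords are an invariant of the subshift generated by $\B(P)$ and can be extracted from $\L_P$ itself; in the model case where the return map acts on the circle of entry directions by a single irrational rotation, the sequence of run lengths is Sturmian and $\{\pi/\theta\}$ is literally its rotation number, determined by its language. Reading off $\{\pi/\theta\}$ together with $\lfloor\pi/\theta\rfloor$ yields $\pi/\theta$, and hence $\theta$, exactly; doing this at every vertex recovers all the angles, which as a consistency check must sum to $(n-2)\pi$. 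The points I expect to require the most care are establishing that the favorable-direction frequency is faithfully visible in $\B(P)$ (rather than merely as an ambient measure I have no a priori access to), and the degenerate case $\pi/\theta\in\Q$, where the run-length sequence is eventually periodic and the proportion must instead be recovered from the period of the corner sublanguage.
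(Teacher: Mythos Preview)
Your opening moves match the paper's: use Theorem~\ref{thm:adj} to find adjacencies, then unfold the corner into a fan of rays at multiples of $\theta$ so that a straight segment crossing the fan meets either $\lfloor\pi/\theta\rfloor$ or $\lceil\pi/\theta\rceil$ rays. But your mechanism for recovering the fractional part $\{\pi/\theta\}$ has a genuine gap.

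The problem is the claim that ``frequencies of subwords are an invariant of the subshift generated by $\B(P)$ and can be extracted from $\L_P$ itself.'' The language $\L_P$ records only \emph{which} finite words occur, not how often; word frequencies are data attached to an invariant \emph{measure}, and $\overline{\B(P)}$ is never uniquely ergodic --- it supports many periodic orbits with wildly different subword statistics. To run your argument you would have to single out, from $\B(P)$ alone, a trajectory whose entry directions into the corner equidistribute on the circle; but for rational tables a single trajectory visits only finitely many directions, and for irrational tables such equidistribution is a major open problem. Your fallback model, that the first-return map to the corner acts on entry directions by a fixed rotation and hence the run-length sequence is Sturmian, is also unavailable: the return direction after leaving the corner depends on everything the trajectory does in the rest of the polygon, so consecutive entry angles are not related by any rotation determined by $\theta$ alone.

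The paper avoids any appeal to measure by a purely existential criterion. Rather than following one orbit's successive corner visits, it manufactures a family of $2p$ \emph{parallel} grazing trajectories that together wind $p$ times around the vertex, alternating sides at each half-turn (Proposition~\ref{prop:hear_3pi/2_gen}, Theorem~\ref{thm:hearangles}). Their combinatorial signature is a system of ``matching sequences'': words in $\L_P$ whose heads and tails agree in reverse, separated by alternating $A/B$ insertions. The Sturmian structure you anticipated is genuinely present --- the insertion lengths are exactly the cutting sequence of a line of slope $p/q$ on the integer grid --- but it lives in this winding construction, not along a single orbit. The angle is then $\pi$ times the ratio of the number of matching sequences to the total length of insertions, a quantity computed from which words \emph{exist} in $\L_P$ for all $n$, not from how often any word occurs.
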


In other words, we show how these pieces of geometric data are encoded within the uncountable collection of symbolic information recorded in $\B(P)$.

Our results are complementary to those in the recent paper by Duchin, Erlandsson, Leininger, and Sadanand \cite{DELS}. They show that $\B(P)$ is a complete invariant for $P$, up to elementary qualifications, as long as a cyclic labeling of the edges of $P$ is fixed. (A shared cyclic labeling is a standing assumption throughout their paper.)

\begin{theorem}[Duchin--Erlandsson--Leininger--Sadanand, \cite{DELS}, Bounce Theorem]\label{thm:DELS}
  If two simply connected polygons have the same bounce spectrum, then either they are similar such that the similarity respects the edge labeling, or they are both right-angled and related by an affine transformation.
\end{theorem}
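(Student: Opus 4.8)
The plan is to use Theorems~\ref{thm:adj} and \ref{thm:hearing_angles} to reduce everything to recovering edge lengths. Suppose $\B(P_1)=\B(P_2)$, with the cyclic labeling shared. By Theorem~\ref{thm:hearing_angles} the two tables have the same angles (matched up by the labeling), and the adjacencies are identified by the shared labeling (and are in any case recoverable by Theorem~\ref{thm:adj}), so $P_1$ and $P_2$ are the same combinatorial polygon with the same angles and can differ only in their edge lengths. Once the angles are fixed, the edge directions are determined up to a global rotation, and the admissible length vectors form a convex cone cut out by the two linear equations expressing that the boundary closes up. Moreover, since right-angledness is visible in the angle data, either both tables are right-angled or neither is, which is how I would split into cases.

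I would treat the right-angled case first, to see where the affine ambiguity comes from. Rotate so that every edge is horizontal or vertical. The reflection in a horizontal edge is $(v_x,v_y)\mapsto(v_x,-v_y)$ and in a vertical edge $(v_x,v_y)\mapsto(-v_x,v_y)$; both commute with every diagonal map $\mathrm{diag}(a,b)$. Such a map sends lines to lines and labeled edges to labeled edges, so it carries billiard trajectories to billiard trajectories with identical bounce sequences. Thus the images of a fixed right-angled table under the diagonal group form a two-parameter family sharing one bounce spectrum, which is precisely the affine ambiguity in the statement. What remains in this case is to rule out any further coincidence, which will come from the rigidity step below.

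For the core of the argument I would pass to the unfolded flat geometry. Unfolding the billiard yields a flat cone surface $S_i$ associated to $P_i$, on which billiard trajectories become bi-infinite geodesics and the bounce sequence becomes the labeled itinerary of a geodesic across the copies of the edges. The hypothesis $\B(P_1)=\B(P_2)$ says that a finite word is realized by a geodesic of $S_1$ if and only if it is realized by a geodesic of $S_2$. The key step is to turn this combinatorial matching into a geometric one. Concretely, I would show that for each admissible word the open set of direction--position pairs realizing it has the same combinatorial incidence pattern on both surfaces, and that this pattern determines the developing map, hence the flat cone metric; equivalently, one encodes the trajectory data as a Liouville current and shows the bounce spectrum determines it up to scale, after which marked-length-spectrum-style rigidity for flat and nonpositively curved metrics forces $S_1$ and $S_2$ to agree up to scaling. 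Together with the matched angles and combinatorics this gives a label-respecting similarity when the tables are not right-angled, and pins the right-angled family to exactly the diagonal affine orbit.

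The hard part is exactly this passage from symbolic to metric data. A polygon with irrational angles unfolds to a flat surface that is not a translation surface, so the well-developed theory of cutting sequences on translation surfaces does not apply directly, and one must instead control how the apertures of admissible words shrink as trajectories approach cone points, i.e.\ along the generalized diagonals. Proving that these apertures are faithfully recorded by the combinatorics---so that no deformation of the edge lengths can escape detection except the horizontal--vertical shears of the right-angled case---is where I expect essentially all of the difficulty to sit, and where the right-angled exception must be carved out by hand.
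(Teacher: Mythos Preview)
This theorem is not proved in the present paper at all: it is quoted from \cite{DELS} as background, and the paper explicitly states that producing an independent proof would require a method for recovering edge lengths from $\B(P)$, which it leaves as an open problem (\S\ref{sec:lengths}). So there is no ``paper's own proof'' to compare against. What the paper does say about the \cite{DELS} argument is that it proceeds by unfolding the table to a flat cone surface and then invoking the marked-length-spectral rigidity of Bankovic--Leininger \cite{BL}; this is broadly the route you sketch in your third paragraph.

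That said, what you have written is an outline, not a proof, and you acknowledge as much: the passage from ``same bounce spectrum'' to ``same Liouville current'' (or ``same marked length spectrum'') on the unfolded surface is the entire content of the theorem, and you have not supplied it. Your first paragraph also mixes levels in a way that is not quite right. Theorems~\ref{thm:adj} and \ref{thm:hearing_angles} are results of \emph{this} paper, proved independently of \cite{DELS}; invoking them does reduce the problem to edge lengths, but the \cite{DELS} proof does not go through them (it assumes the shared cyclic labeling from the outset and works directly with the flat geometry). Finally, in the right-angled case you have shown only one direction---that the diagonal affine orbit shares a bounce spectrum---which is the easy half already recorded here as Proposition~\ref{prop:rectilinear}; the substantive claim is that \emph{no further} coincidences occur, and that again sits inside the rigidity step you have not carried out.
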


Our Theorem 4.10 eliminates the need for their assumption that the polygons have a shared cyclic edge labeling, since it shows that this information can be derived from $\B(P)$. Additionally, while their result ensures that $\B(P)$ faithfully encodes the geometry of $P$, our results draw out exactly how the adjacency and angle information is encoded in $\B(P)$.

Our results about adjacency and angles give a method for reconstructing $P$ from $\B(P)$ up to a {\em parallel family} of polygons, an example of which is illustrated in Figure \ref{fig:parallel-pent}. It remains an open problem to produce a method for recovering the edge lengths of $P$ from $\B(P)$. We briefly discuss this problem in \S\ref{sec:lengths}. Producing a method for recovering lengths would provide, when combined with our Theorems \ref{thm:adj} and \ref{thm:hearing_angles}, an independent and constructive proof of Theorem \ref{thm:DELS}. Since triangles are determined up to similarity by their angles, our results do provide an independent and constructive proof of Theorem \ref{thm:DELS} in the case of triangles.

\begin{figure}[h]
  \centering
  \includegraphics[width=2in]{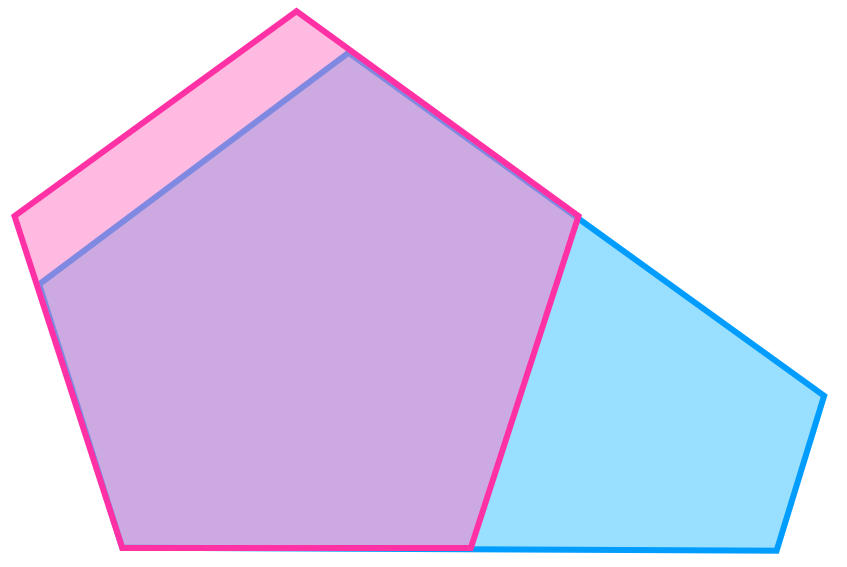}
  \caption{Overlapping polygons whose angles are all $3\pi/5$. They are in the same parallel family, so we need to know their edge lengths to distinguish between them.}
  \label{fig:parallel-pent}
\end{figure}

Our proofs of Theorems \ref{thm:adj} and \ref{thm:hearing_angles} crucially leverage the existence of certain bounce words of arbitrary length to reconstruct adjacency of edges and sizes of angles. Our third main result shows that one can never fully recover the shape of a polygon from a set of bounce words of bounded length.

\begin{thmC} A polygon $P$ cannot be reconstructed from any finite subset of its bounce language $\L_P$.
\end{thmC}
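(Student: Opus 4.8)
The plan is to exhibit, for any given finite subset of words $W \subset \L_P$, a second polygon $P'$ that is genuinely different from $P$ (not similar, not an affine image) yet whose bounce language contains all of $W$. Since the words in $W$ have bounded length, I only need $P'$ to agree with $P$ on trajectory behavior up to a bounded combinatorial complexity. The key intuition is that a finite collection of finite bounce words constrains only finitely many ``short'' trajectories, and short trajectories cannot detect fine features of the table that require long trajectories to probe. This is precisely the feature that Theorems~\ref{thm:adj} and~\ref{thm:hearing_angles} exploit in the opposite direction: there, arbitrarily long words are needed to pin down adjacency and angles, so suppressing long words should destroy rigidity.

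\medskip

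\textbf{First}, I would set up a perturbation framework. Starting from $P$ with its finite word set $W$, I want to produce a one-parameter family $P_t$ of polygons with $P_0 = P$, such that for all sufficiently small $t$ the inclusion $W \subset \L_{P_t}$ persists. The mechanism is a continuity/stability statement: each word $w \in W$ arises from some trajectory $\tau_w$ on $P$, and because $w$ is a \emph{finite} word, $\tau_w$ is determined by hitting a finite sequence of edges in a combinatorially stable way. Realizing a finite bounce word is an \emph{open} condition on the shape of the polygon — a trajectory that hits edges $e_1, \dots, e_n$ transversally in order will continue to do so under a small enough perturbation of the edges, by the openness of transversal intersection and continuous dependence of billiard trajectories on the table and initial conditions. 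Taking the minimum over the finitely many words in $W$ of the allowable perturbation size gives a uniform $\epsilon > 0$ within which all of $W$ survives.

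\medskip

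\textbf{Second}, I must ensure the perturbed polygon $P_t$ is genuinely \emph{not} reconstructible-equivalent to $P$ — that is, I need a perturbation direction that changes $P$ in a way not accounted for by similarity or (in the right-angled case) affine equivalence. Here I would deform a feature of $P$ that the finite data $W$ cannot see: for instance, slightly moving a single vertex, or subdividing an edge and bending it by a tiny angle to create a new polygon with different angles or a different number of edges, while keeping all the $\tau_w$ trajectories well inside the ``safe'' region. One clean realization is to introduce a small notch or to split an edge at a point never visited by the finitely many bounded trajectories carrying the words of $W$; since no word in $W$ records a bounce on the new edge, $W$ remains a subset of $\L_{P_t}$, yet $P_t$ has strictly more edges than $P$ and so cannot be similar or affinely equivalent to it.

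\medskip

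\textbf{The hard part} will be guaranteeing that the perturbation genuinely escapes the equivalence classes of Theorem~\ref{thm:DELS} while simultaneously keeping all the finitely many prescribed words alive. The tension is that a perturbation small enough to preserve every $w \in W$ might, a priori, be so mild that $P_t$ stays similar (or affinely) equivalent to $P$; I must verify that the chosen deformation direction is transverse to the finite-dimensional orbit of similarities and affine maps. I expect the cleanest argument to be a dimension count: the space of polygons with a fixed number of edges near $P$ has dimension growing with the edge count, the similarity-and-affine equivalence class is finite-dimensional, so a generic small perturbation — in particular one that also adds an edge — leaves the class. Combining the openness of word-realization (the first step) with this genericity yields a $P_t$ that is distinct from $P$ under every equivalence in Theorem~\ref{thm:DELS} yet shares the entire finite word set $W$, establishing that $W$ cannot determine $P$.
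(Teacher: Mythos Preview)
Your first step is exactly the paper's argument: each finite word is realized by a trajectory that stays a definite distance from every vertex in the development, so the vertices of $P$ can be perturbed by some $\varepsilon_w>0$ without changing the edge sequence; taking $\varepsilon=\min_{w\in W}\varepsilon_w$ gives a full-dimensional neighborhood $P_\varepsilon$ in the moduli space $\mathcal{P}_n$ all of whose members realize every word in $W$.

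Your second and third steps, however, are unnecessary over-engineering. You worry that a small perturbation might remain similar or affinely equivalent to $P$, and propose adding notches or extra edges and then running a dimension count. The paper avoids this entirely: it works in the moduli space $\mathcal{P}_n$ of $n$-gons modulo similarity (normalizing $z_1=0$, $z_2=1$), so the $\varepsilon$-neighborhood $P_\varepsilon$ is by construction a full-dimensional open set of \emph{pairwise non-similar} $n$-gons. No notches, no edge-splitting, no transversality argument is needed---any $Q\in P_\varepsilon$ with $Q\neq P$ already witnesses the failure of reconstruction. Your approach is correct, but the ``hard part'' you identify is not hard once the perturbation is set up in the right coordinates.
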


Despite this result, we point out that no matter the angle at the vertex between adjacent edges $A$ and $B$, it is possible to obtain a rough bound on its size using the lengths of strings of alternating $A$s and $B$s that appear within $\L_p$. Some examples are given in Table \ref{table:angles}. These calculations come from unfolding the corner of the polygon until copies of the unfolded angle sum to more than $\pi$, and then counting how many times a single line can cut across the unfolded edges. This observation is in fact the starting point of our result on reconstructing angles from $\B(P)$. The fact that finite words do give rough information about sizes of angles raises the following effectivization problem.

\begin{table}[h]
  \begin{center}
\begin{tabular}{  c | c  }
maximum length of word of alternating $A$s and $B$s & indicate that $\theta$ satisfies   \\ \hline
1 & $\pi \leq \theta$ \\
2 & $\pi/2 \leq \theta < \pi\phantom{/1}$ \\
3 & $\pi/3 \leq \theta < \pi/2$ \\
4 & $\pi/4 \leq \theta < \pi/3$ \\
\vdots& \vdots
\end{tabular}
\caption{Maximum lengths of strings of alternating $A$s and $B$s yield a coarse bound on the angle; finding the exact average length gives more precision (Theorem \ref{thm:irr}). \label{table:angles}}
  \end{center}
\end{table}

\begin{question}
Given the set of all words in $\L_P$ of length at most $N$, how precise of an approximation of $P$ can one construct?
\end{question}

This observation about $\L_P$ and coarse bounds on angles implies that it is sometimes possible for a single bounce word to distinguish between tables. For example, a rhombus with angle $\pi/3$ between $A$ and $B$ admits the bounce word $ABA$, by starting on $A$ and shooting perpendicular to $B$ (Figure \ref{fig:words}(a)). However, a square billiard table with edges consecutively labeled $A,B,C,D$ does not have any trajectory containing bounce word $ABA$; a trajectory hitting $A$ and then $B$ must hit $C$ or $D$ next (Figure \ref{fig:words}(b)).

Finally, our methods for determining adjacency and angles use only the local structure of the table around each vertex and developments along billiard trajectories. As a consequence, our methods apply to polygons that are not simply connected, polygons that wrap around themselves, and polygons with angles greater than $2\pi$ (see Figure \ref{fig:not-sc}). The exact class of these ``generalized polygons'' is discussed in \S\ref{sec:background}. Note that our methods cannot determine other aspects of such polygons, such as the location of the ``hole'' in the left polygon of Figure \ref{fig:not-sc}.\\

\begin{figure}[h]
  \centering
  \includegraphics[width=3in]{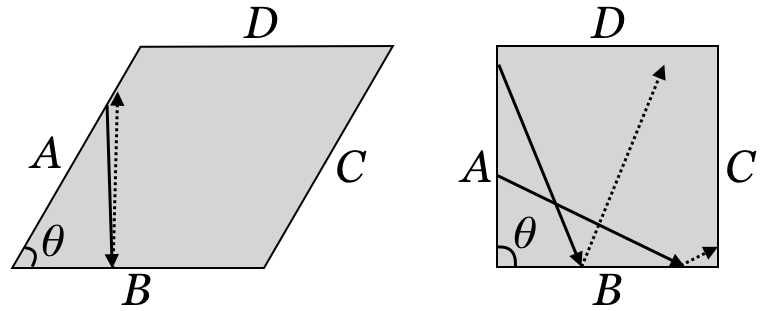}
  \caption{A rhombus with angles $\pi/3$, $2\pi/3$ contains the bounce word $ABA$, but a square table does not; this is an example of the calculations in Table \ref{table:angles}.}
  \label{fig:words}
\end{figure}

\subsection{Selected results on spectral rigidity}\label{subsec:previous}

In Kac's original question, the ``drums'' were connected planar domains and their ``sound'' was the spectrum of the corresponding Laplacian (note that in this setting, eigenvalues of the Laplacian may be identified with overtones, so the question is not far divorced from physical reality). Using a method of Sunada \cite{Sunada}, Gordon, Webb, and Wolpert found two--dimensional counterexamples to Kac's question, i.e., domains whose Laplacians have the same spectrum \cite{GWWdrum}. However, Zelditch has proven that if the domains are convex and have certain symmetry and regularity properties, then the spectrum of the Laplacian does distinguish these domains \cite{Zelditch}.

\begin{figure}[t]
\centering
\includegraphics[width=5in]{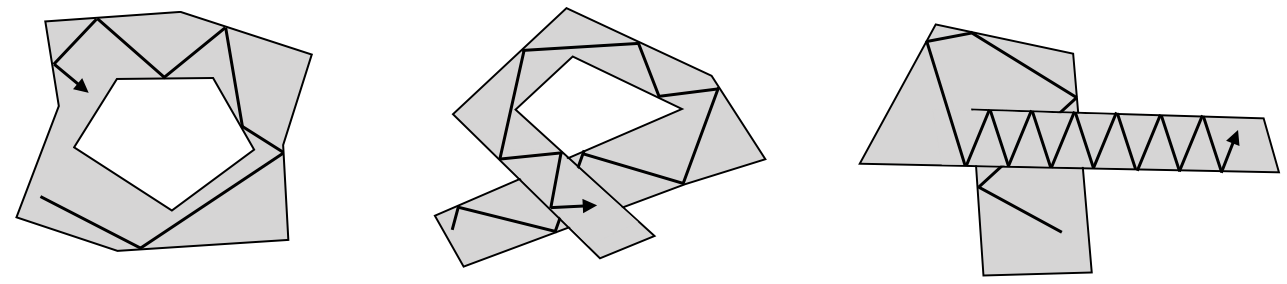}
\caption{Generalized polygonal billiard tables, with an example trajectory on each. Our methods for finding adjacency and angles also apply to such polygons.}
\label{fig:not-sc}
\end{figure}

Now let $M$ be a surface equipped with a hyperbolic metric $\varphi$ of constant curvature. By the Selberg trace formula, the spectrum of its Laplacian determines its {\em marked length spectrum}, the function that assigns to every free homotopy class of loops in $M$ the $\varphi$--infimal length of a representative. Even when $\varphi$ is not hyperbolic, one may still ask if the $\varphi$--marked length spectrum determines the metric.

It is a classical result of Teichm{\"u}ller theory that the lengths of only $9g-9$ simple closed curves are needed to distinguish hyperbolic structures of constant curvature $-1$ on a closed surface of genus $g \ge 2$, and Hamenst{\"a}dt has shown that $6g-5$ curves (and no fewer) suffice \cite{6g-5}.

Otal proved that the entire marked length spectrum on surfaces with Riemannian metrics of negative curvature distinguishes these metrics \cite{Otal}.
This result was subsequently generalized to certain metrics of nonpositive curvature by Croke, Fathi, and Feldman in various combinations (see \cite{Croke}, \cite{Fathi}, \cite{CFF}).

In the non-Riemannian setting, Bonahon proved that the marked length spectrum can no longer differentiate between all metrics on a surface \cite{BonahonCounter}. All the same, Duchin, Leininger, and Rafi proved that the marked length spectrum distinguishes between flat cone metrics coming from quadratic differentials [\cite{DLR}, Theorem 1]. 
Bankovic and Leininger extended this result to all nonpositively curved flat cone metrics \cite{BL}.

By unfolding a polygonal table to a flat cone surface and using the symbolic coding of geodesics coming from the table, the authors of \cite{DELS} are able to use the results of \cite{BL} in their proof of Theorem \ref{thm:DELS}. The rigidity of the bounce spectrum can therefore be seen as a natural combinatorial extension of the inverse spectral problem.

Duchin, Leininger, and Rafi also showed no finite set of curves in the marked length spectrum distinguishes the flat cone metrics associated to a quadratic differential [\cite{DLR}, Theorem 3]. This result can be compared to our Theorem \ref{thm:no_finite}.

The above results show that marked length spectra are in some contexts complete invariants of a metric on a surface---they abstractly determine the metric that induces them. In these cases one can take up the corresponding reconstruction questions, as we do in this paper for the bounce spectrum. For example, it is immediate that one can reconstruct a polygonal presentation for a quadratic differential given the holonomy of every saddle connection. However, it is unknown if knowing only the lengths of the saddle connections is enough to derive the same result.

\begin{question}
Given the marked length spectrum of a flat cone metric coming from a quadratic differential $q$ on a closed surface of genus at least 2, can one reconstruct a polygonal presentation for $q$?
\end{question}

\subsection{Selected results on symbolic dynamics}

Our work on the bounce spectrum is related to a large amount of literature on bounce sequences, and the related \emph{cutting sequences} on translation surfaces.

While our goal is to start with the bounce spectrum and reconstruct the billiard table, most of the literature on bounce sequences and cutting sequences begins with the table or surface, and describes its spectrum of sequences. Morse and Hedlund \cite{MH} worked on classifying cutting sequences on \emph{Veech surfaces} nearly a century ago, and more recently Smillie and Ulcigrai \cite{SU1, SU2}, Davis \cite{Davis1, Davis2} and Davis, Pasquinelli and Ulcigrai \cite{DPU} classified cutting sequences on specific cases of Veech surfaces.

Certainly, the work of our colleagues \cite{DELS} uses different methods to address the same questions that we consider here. Prior work of Bobok and Troubetzkoy \cite{CodeOrder} also proved a similar result to our Theorem \ref{thm:hearing_angles}, under the assumption that the billiard table was rational and that there exists a point whose return map to the boundary of the polygon is minimal. In fact, it is an interesting coincidence of convergent mathematics that our Theorem \ref{thm:hearing_angles} and their Theorem 7.1 result in similar pictures, despite using different technical machinery.

Bobok and Troubetzkoy have also proven that the set of periodic orbits on a rational table, i.e., the periodic bounce spectrum, is enough to determine a (non-right-angled) rational table \cite{BobTroub}. This result can also be re-derived using \cite[\S 5.2]{DELS}. There should be a method, therefore, to reconstruct rational tables using only this information.

\begin{question}\label{question:rational_from_periodic}
  Can one reconstruct a rational billiard table from its periodic bounce spectrum?
\end{question}

\subsection{Reconstructing edge lengths for a right-angled table is impossible}\label{subsec:rect}

We conclude the introduction with a proof that it is impossible to use the bounce spectrum to differentiate between right-angled tables that are related by an affine transformation.

\begin{definition}
  A billiard table is \emph{right-angled} if all of its angles are $\pi/2$ or $3\pi/2$.
\end{definition}

\begin{proposition}\label{prop:rectilinear}
  Two right-angled billiard tables that are related under edge-parallel stretching, i.e. under an affine transformation of the form $$\left[\begin{matrix}a&0\\ 0&b\end{matrix}\right]$$ for tables with horizontal and vertical edges, have the same bounce spectrum.
\end{proposition}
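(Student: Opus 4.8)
The plan is to show that the linear map $T=\left[\begin{matrix}a&0\\ 0&b\end{matrix}\right]$, acting by $(x,y)\mapsto(ax,by)$, carries billiard trajectories on $P$ bijectively to billiard trajectories on $P'=T(P)$ in a way that preserves edge labels, whence the two tables have identical bounce spectra. The essential point is that although $T$ distorts angles in general---so that it does not preserve billiard trajectories on an arbitrary polygon---the reflection law on a right-angled table takes an especially simple form that is compatible with $T$.

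First I would record the geometry forced by right-angledness: since every interior angle is $\pi/2$ or $3\pi/2$, consecutive edges are perpendicular, so after rotating we may assume every edge of $P$ is either horizontal or vertical, and the same holds for $P'$. Next I would describe the reflection law in these coordinates. Between bounces a trajectory travels with constant velocity $(v_x,v_y)$; a bounce off a vertical edge sends $(v_x,v_y)\mapsto(-v_x,v_y)$, and a bounce off a horizontal edge sends $(v_x,v_y)\mapsto(v_x,-v_y)$. In particular $|v_x|$ and $|v_y|$ are preserved along the entire trajectory.

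The key computation is then the compatibility of $T$ with these reflections. A straight segment in $P$ of direction $(v_x,v_y)$ maps under $T$ to a straight segment in $P'$ of direction $(av_x,bv_y)$. Since $T$ sends vertical edges to vertical edges and horizontal edges to horizontal edges, I would check that $T$ intertwines the two reflection laws: the $P'$-reflection of $(av_x,bv_y)$ off a vertical edge is $(-av_x,bv_y)=T(-v_x,v_y)$, i.e.\ exactly $T$ applied to the $P$-reflection, and likewise the $P'$-reflection off a horizontal edge is $(av_x,-bv_y)=T(v_x,-v_y)$. Consequently the broken path $T(\tau)$ satisfies the law of reflection at every bounce and lies in $P'$, so it is a genuine billiard trajectory; applying the same argument to $T^{-1}$ shows this correspondence is a bijection between the trajectories on $P$ and those on $P'$.

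Finally I would observe that $T$ is a label-preserving homeomorphism $P\to P'$ carrying each edge interior onto the corresponding edge interior, and vertices to vertices, hence nonsingular trajectories to nonsingular trajectories. Therefore $\tau$ and $T(\tau)$ hit the same sequence of edges, so $\B(\tau)=\B(T(\tau))$, and ranging over all trajectories yields $\B(P)=\B(P')$. I expect the only delicate point to be the reflection-compatibility computation, which is exactly where right-angledness is indispensable: for a non-axis-parallel edge the diagonal map $T$ would fail to commute with reflection, and this is precisely why the proposition is special to right-angled tables.
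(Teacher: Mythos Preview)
Your proof is correct and follows essentially the same approach as the paper: both argue that the diagonal stretch $T$ carries billiard trajectories to billiard trajectories because reflection off an axis-parallel edge commutes with $T$. The paper phrases this as ``a stretch parallel to an edge preserves angle equality for bounces off that edge'' and then notes that every edge of a right-angled table is parallel or perpendicular to every other; your version makes the same point more explicitly by writing the reflection law in coordinates as $(v_x,v_y)\mapsto(\pm v_x,\mp v_y)$ and checking the intertwining identity directly.
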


\begin{proof}
A stretch (expansion or contraction) of a billiard table parallel to one of its edges changes the angles of a trajectory bouncing off that edge, but preserves angle equality. An expansion of a billiard table in one direction is equivalent to a contraction of the table in the perpendicular direction, by scaling the picture. Each edge of a right-angled table is parallel or perpendicular to every other edge, so a stretch parallel to any edge preserves angle equality for bounces off of any other edge of the table (Figure \ref{fig:rectilinear}). Since each billiard trajectory on a right-angled table is also a billiard trajectory on the tables that are its affine image under a horizontal or vertical stretch, such stretches preserve the bounce spectrum.
\end{proof}

\begin{figure}[!h]
  \centering
  \includegraphics[width=4in]{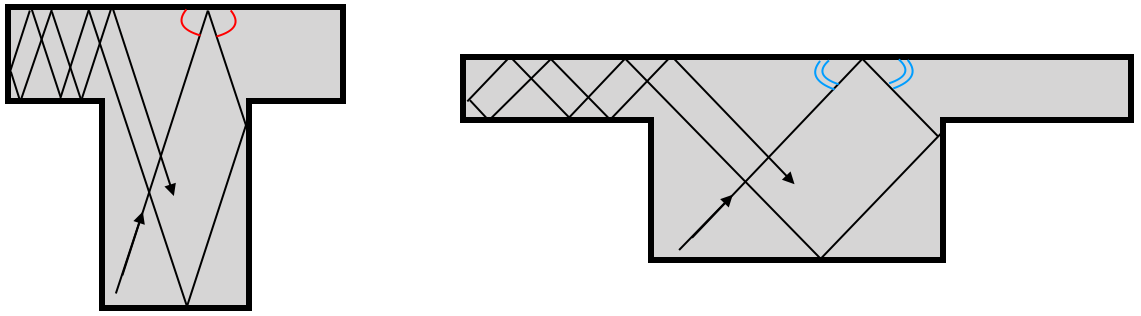}
  \caption{Two right-angled tables that are related by a vertical compression and horizontal elongation, with a billiard trajectory. These transformations preserve angle equality for bounces against horizontal and vertical edges and thus preserve billiard trajectories on right-angled tables.}
  \label{fig:rectilinear}
\end{figure}

\begin{corollary}
  The shape of a right-angled table $P$ can at best be reconstructed from $\B(P)$ up to equivalence under edge-parallel stretching.
\end{corollary}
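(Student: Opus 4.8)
The plan is to deduce the corollary directly from Proposition \ref{prop:rectilinear}. The guiding principle is that a geometric feature of $P$ can be recovered from $\B(P)$ only when that feature is shared by every table having the identical bounce spectrum; equivalently, $\B(P)$ can distinguish two tables only if their bounce spectra genuinely differ. So it suffices to exhibit, for each right-angled $P$, a family of pairwise non-similar right-angled tables all of which share the single bounce spectrum $\B(P)$.

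First I would fix a right-angled table $P$ with horizontal and vertical edges and let $P'$ be its image under an edge-parallel stretch $\left[\begin{smallmatrix}a&0\\0&b\end{smallmatrix}\right]$. Proposition \ref{prop:rectilinear} gives $\B(P)=\B(P')$, while for $a/b$ generic the table $P'$ is not similar to $P$, since such a stretch alters the ratios of horizontal to vertical edge lengths. Thus the assignment $P \mapsto \B(P)$ is constant on each edge-parallel-stretching equivalence class, and these classes contain non-similar tables. Consequently, any procedure reconstructing the shape of a right-angled table from its bounce spectrum must assign the same output to every member of a class, so the finest invariant one could hope to extract from $\B(P)$ is the equivalence class of $P$ under edge-parallel stretching---precisely the assertion of the corollary.

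There is essentially no obstacle here beyond correctly interpreting the phrase ``at best'': the corollary records an upper bound on what is recoverable, and all of its mathematical content is already contained in Proposition \ref{prop:rectilinear}. The only point meriting a word of care is confirming that the equivalence classes are nontrivial---that edge-parallel stretching really does carry a table to a non-similar one for suitable $a \neq b$---which is immediate from the change in edge-length ratios.
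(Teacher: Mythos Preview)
Your proposal is correct and follows exactly the intended approach: the paper states this corollary without proof immediately after Proposition~\ref{prop:rectilinear}, treating it as a direct consequence. Your argument simply makes explicit the obvious logical step---that since $\B(P)$ is constant on edge-parallel-stretching classes (which contain non-similar tables), no finer invariant can be extracted---which is precisely what the paper leaves implicit.
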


\subsection{Outline of the paper}

We begin in \S\ref{sec:background} by defining the bounce spectrum and fixing our notation for the rest of the paper.
We also make precise what we mean by ``reconstructing the table from its bounce spectrum.'' We also collect results about how trajectories produce bounce sequences and how in turn a bounce sequence may be realized by trajectories.

We investigate how the geometry of a polygon encodes itself in its bounce spectrum in \S\S \ref{sec:emitters} --\ref{sec:angles}; this forms the technical heart of the paper. In \S\ref{sec:emitters}, we define {\em common prefixes} and {\em ideal trajectories}, concepts that allow us to work with singular trajectories in a coherent way. We then make use of these notions extensively in \S\ref{sec:adjacency} and \S\ref{sec:angles}, in which we prove our main theorems on reconstructing a billiard table from its bounce spectrum.

To construct adjacency and angles in sections \S\ref{sec:adjacency} and \S\ref{sec:angles}, we crucially use infinite sequences. In \S\ref{sec:no_finite}, we show that this is the best we can possibly do: it is not possible for a finite amount of information from the bounce spectrum to determine the polygon (Theorem \ref{thm:no_finite}).

\subsection{Acknowledgments} We thank Moon Duchin for organizing the Polygonal Billiards Research Cluster, for suggesting this problem, and for providing leadership and motivation throughout the project. We also thank the other participants and visitors of the cluster for many interesting and productive discussions about this problem, especially the authors of \cite{DELS}, Curt McMullen, Ronen Mukamel, and Rich Schwartz. We are grateful to Dan Margalit and Serge Troubetzkoy for helpful comments.\\

\noindent \textbf{Funding.} This work was initiated in the Polygonal Billiards Research Cluster held at Tufts University in Summer 2017 and was supported by the National Science Foundation under grant [DMS-CAREER-1255442]. AC and JL were also partially supported by NSF grants [DGE-1122492] and [DGE-1650044], respectively.

\section{Background}\label{sec:background}

\subsection{Definition of the bounce spectrum}

As noted in the introduction, our results will be applicable to a wider class of billiard tables than standard polygons. To describe the class precisely, let $X$ be a simplicial 2-complex. A {\em Euclidean realization} of $X$ is an assignment of lengths to each edge such that the edge lengths of each simplex satisfy a non-degenerate triangle inequality. Such a realization allows us to identify each 2-simplex with a (non-degenerate) Euclidean triangle.

\begin{definition}
  A {\em generalized polygon} is the piecewise Euclidean complex obtained as a Euclidean realization of a
  pure,
  finite, connected, simplicial, 2-complex $X$
  such that the link of every vertex is a path.
\end{definition}

With this definition, a polygon is simply a generalized polygon whose interior isometrically embeds into the plane.

If $P$ is a generalized polygon, we set $\partial P$ to be the set of edges of $P$ which are contained in only one face. A {\em marked generalized polygonal billiard table} is a generalized polygon $P$ together with a labeling of its boundary edges where the labels come from some finite
alphabet $\A$. The billiard flow $\phi_t$ on $P \times S^1$ is given as a piecewise linear flow with optical reflection at edges of $\partial P$. As for standard polygonal tables, we fix the convention that the flow is stationary at corners.

In the sequel, for ease of exposition we usually restrict our discussion to standard polygons. That said, all of our results
(and the relevant results from the literature, see forthcoming work of Yunzhe Li \cite{Troubetzkoy})
hold in the generalized setting.

For a pair $(p,\theta) \in P\times S^1$ (where we identify $S^1$ with $[0, 2\pi)$), we define the trajectory $\tau (p,\theta)$ to be the orbit of $p$ under both the forward and backwards billiard flow.
 We can also define the forward trajectory $\tau_+(p,\theta)$ by taking only the forward billiard flow.

 If a trajectory $\tau(p,\theta)$ ever hits a vertex of $P$, then we say that it is \emph{singular}. If $\tau(p,\theta)$ is nonsingular, then the \emph{(full) bounce sequence}, $\B(p,\theta) \in \A^\Z$ is the bi-infinite indexed string of symbols in the alphabet $\A$ encoding the edges traversed by $\tau (p,\theta)$. The \emph{forward bounce sequence} $\B_+(p,\theta) \in \A^\N$ is the sequence of edges traversed by a nonsingular forward trajectory $\tau_+(p,\theta)$.
 We fix the convention that if $p \in \partial P$, say on edge $A$, then $\B_+(p,\theta)$ does not start with edge $A$, but rather the next edge that $\tau_+(p,\theta)$ meets.

\begin{definition}
The \emph{bounce spectrum} of a (generalized) polygon $P$ is the collection $\B(P)=\{\B(p,\theta)\}$ of the bounce sequences of all nonsingular trajectories on $P$.
The \emph{bounce language} $\L=\L_P\subset \A^*$
is the language of finite subwords of $\B(P)$. Similarly, we define the \emph{forward bounce spectrum} $\B_+(P)$ as the collection of all $\B_+(p,\theta)$ such that $\tau_+(p,\theta)$ is nonsingular.
\end{definition}

Note that $\tau_+(p, \theta) = \tau_-(p, -\theta)$, so $\B_+(P) = \B_-(P)$, so the latter notation for backward bounce spectrum is unnecessary:
\begin{observation}
For any (generalized) polygon $P$, $\B_+(P) = \B_-(P)$.
\end{observation}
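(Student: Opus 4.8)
The plan is to exploit the time-reversibility of the billiard flow. The only content to verify is that reversing the direction of travel along a trajectory again produces a trajectory, that this operation preserves nonsingularity, and that it carries forward bounce sequences to backward bounce sequences without reordering the letters.

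First I would record the pointwise statement. For any $(p,\theta) \in P \times S^1$, optical reflection is symmetric under reversing the direction of motion, so the backward orbit of $(p,-\theta)$ traces exactly the same subset of $P$ as the forward orbit of $(p,\theta)$; this is the identity $\tau_+(p,\theta) = \tau_-(p,-\theta)$ stated just above. Reading the edges met along this common path in order of increasing distance from $p$, the trajectories $\tau_+(p,\theta)$ and $\tau_-(p,-\theta)$ encounter the same edges in the same order, so $\B_+(p,\theta) = \B_-(p,-\theta)$ as elements of $\A^\N$. Here it is essential that the letters are read in the same order and not reversed: both sequences list the edges starting from the bounce nearest $p$ and proceeding outward.

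Next I would note that the condition of being singular---that the orbit meets a vertex of $P$---is insensitive to the direction of travel, so $\tau_+(p,\theta)$ is nonsingular if and only if $\tau_-(p,-\theta)$ is. Thus the involution $(p,\theta) \mapsto (p,-\theta)$ of $P \times S^1$ restricts to a bijection between the nonsingular forward trajectories and the nonsingular backward trajectories, and under this bijection forward bounce sequences are identified with backward bounce sequences.

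Finally I would assemble these observations into the chain $\B_+(P) = \{\B_+(p,\theta)\} = \{\B_-(p,-\theta)\} = \B_-(P)$, where the middle equality is the pointwise identity and the last uses that $(p,\theta)\mapsto(p,-\theta)$ ranges over all of $P \times S^1$. I do not anticipate a genuine obstacle; the only point requiring care is the bookkeeping that the reversal preserves the left-to-right order of the bounce sequence rather than reversing it, which is precisely why $\B_+$ equals $\B_-$ and not its reversal.
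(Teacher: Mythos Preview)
Your proposal is correct and follows essentially the same approach as the paper, which treats this as a one-line observation deduced immediately from the identity $\tau_+(p,\theta)=\tau_-(p,-\theta)$ stated just before it. Your write-up is more careful than the paper's in making explicit that nonsingularity is preserved and that the indexing of the two sequences matches rather than reverses, but the underlying idea---time-reversal symmetry of the billiard flow---is identical.
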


\begin{remark}\label{rem:singular-word}
Observe the subtle differences between between $\L$ and the set of all words that can be realized by any trajectory, and between $\B_+(P)$ and infinite tails of elements of $\B(P)$. For the former, singular trajectories may realize words before reaching a vertex, and for the latter we may have $(p,\theta)$ with a singular backwards trajectory and non-singular forward trajectory.

In \S \ref{realizing} we resolve the former issue by showing that any words realized by singular trajectories are also realized by non-singular trajectories (Corollary \ref{coro:language_clarification}). Similarly, the latter issue is resolved in \S \ref{sec:full_determines_forward}, where we characterize exactly which bounce sequences are realizable (Theorem \ref{thm:detecting_forward_realizability}).
\end{remark}

\subsection{Developments}

\indent One key tool in analyzing trajectories and bounce sequences is to \emph{unfold} copies of a marked billiard table along a trajectory. Take a sequence of edges $(E_i)$, which may be either finite or infinite. 
The \emph{unfolding} or \emph{development} $\D_P((E_i))$ of a polygon $P$ along a given sequence of edges $(E_i)$ is the polygonal complex whose faces are
\[(P, r_1 P, r_2 r_1 P, r_3 r_2 r_1 P, \dots ),\]
where $P$ stays in place, $r_i P$ is the reflection of $P$ over the edge labeled by $E_i$, and successive polygons are identified along their reflecting edge: $r_j \dots r_1 P$ is glued to $r_{j+1} r_j \dots r_1 P$
along their edges labeled $E_{j+1}$. Here each $E_i$ refers to edges of different reflected copies of $P$, so the edge labels $E_1, \ldots, E_{j+1}$ are not necessarily distinct.

Observe that if $P \subset \C$ is a planar polygon, then there is a natural projection of $\D_P(w)$ to $\C$.

The development of a polygon inherits a natural piecewise Euclidean metric from $P$, and by extending a choice of positive $y$-direction from our original $P$ the development may also be equipped with a consistent choice of positive $y$-direction.

Endow $P$ with an orientation, say counterclockwise. Then a reflection over an edge of $P$ is orientation reversing. Moreover, the orientation of an image of $P$ under some number of reflections, say $n$, is counterclockwise if and only if $n$ is even.
In particular, we see that the development $\D_P(w)$ is tiled by copies of $P$ with alternating orientation.

If $w=E_1 \ldots E_n$ is a word in $\L$, then we will abuse notation and set
\[\D_P(w) := \D_P((E_i)_{i=1}^n).\]
We will also often write $E_j \dots E_1 P$ to mean $r_j \dots r_1 P$ when it is clear from context.

\subsection{Topology of the bounce spectrum}\label{sec:spectrum}

We may topologize $\B(P)$ by viewing it as a subspace of the sequence space $\A^\Z$. 
The topology on $\A^\Z$ is the standard topology generated by a basis of cylinder sets, with the discrete topology on $\A$.

\begin{definition}\label{def:cylinder2ind}
Given a polygon $P$ with edges $\set{E_1, E_2, \ldots, E_n}$, a \emph{cylinder set} of $\A^\Z$ is any set of the form \[ \mathcal{C}_{[i_1,i_2]}(E_1, E_2, \ldots, E_N) = \cdots \times \A \times \A \times E_1 \times E_2 \times \cdots \times E_N \times \A \times \A \times \cdots, \] where the fixed letters begin at index $i_1$ and end at index $i_2 = i_1 + N$, for any indices $i_1, i_2$, any finite length $N \in \N$, and any edges $E_1, E_2, \ldots, E_N$.
For brevity, will often suppress the indices when convenient.
\end{definition}

We denote the closure of $\B(P)$ in $\mathcal{A}^\Z$ by $\overline{\B(P)}$. Just as with $\B(P)$, it will at times be beneficial to consider the closure of $\B_+(P)$ in $\mathcal{A}^\N$, denoted $\overline{\B_+(P)}$. Observe that by Tychonoff's theorem, both of these sets are compact.

\begin{observation}\label{obs:full_bounce_not_dense}
$\B(P)$ is not dense in $\A^\Z$.
\end{observation}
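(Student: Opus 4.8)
The plan is to show that $\B(P)$ omits an entire basic open set, by exhibiting a single finite word that belongs to no bounce sequence. Since the cylinder sets of Definition \ref{def:cylinder2ind} form a basis for the topology on $\A^\Z$, it suffices to produce a nonempty cylinder set disjoint from $\B(P)$, equivalently a finite word $w \in \A^*$ with $w \notin \L_P$. I claim that for any edge label $A$ the length-two word $AA$ is such a word: no nonsingular trajectory can bounce off the same edge twice in succession.

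To verify this, I would argue in the development $\D_P(AA)$. Unfolding along the first letter reflects $P$ over its edge labeled $A$ to produce $r_1 P$, which is glued to $P$ along that edge. Because $r_1$ is the reflection fixing $A$, the edge of $r_1 P$ labeled $A$ is literally the same segment as the edge $A$ of $P$. A trajectory realizing $AA$ develops to a straight line that first crosses this segment to pass from $P$ into $r_1 P$ and then must cross the edge labeled $A$ of $r_1 P$, i.e., cross the identical segment a second time. A straight line not contained in a given line meets it at most once, and a nonsingular trajectory is transverse to each edge it crosses, so this is impossible. Equivalently, and without unfolding: optical reflection reverses the component of velocity perpendicular to $A$, so after the bounce the trajectory moves strictly away from the line containing $A$ and can never return to that line without an intervening bounce.

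With $AA \notin \L_P$ in hand, the conclusion is immediate: the cylinder set $\mathcal{C}_{[0,1]}(A,A)$ is nonempty and open, yet contains no element of $\B(P)$, so $\overline{\B(P)} \neq \A^\Z$.

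The argument is short, so the only real care is in the conventions. I am implicitly using that distinct boundary edges carry distinct labels, which is a standing assumption; the development argument above then applies verbatim to generalized and non-convex polygons, since it never uses convexity or planarity, only that $A$ names a single edge of $P$ and that developed trajectories are straight. This forbidden-word phenomenon is precisely the coarse mechanism behind the $ABA$-versus-square example and Table \ref{table:angles}: the combinatorics of which short alternating words can appear already pins $\B(P)$ away from being all of $\A^\Z$.
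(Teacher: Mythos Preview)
Your proof is correct and follows exactly the same approach as the paper: exhibit the cylinder set $\mathcal{C}(A,A)$ for any edge label $A$ and observe that no nonsingular trajectory can hit the same edge twice consecutively. You supply more justification for this last fact (via the development and the optical-reflection argument) than the paper does, but the idea is identical.
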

\begin{proof}
Let $E$ be an edge of $P$ and consider the cylinder $\mathcal{C}(E,E)$. This is an open set in $\A^\Z$, and its intersection with $\B(P)$ is empty, because a billiard trajectory can never bounce off of the same edge twice in a row.
\end{proof}

Therefore $\B(P)$ is ``not too big'' in $\A^\Z$. However, $\B(P)$ is still very large, because every pair of trajectories that are not parallel must have bounce sequences that eventually disagree:

\begin{lemma}\label{lem:uncountable}
$\B(P)$ is uncountable.
\end{lemma}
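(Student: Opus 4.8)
The plan is to exhibit an uncountable family of pairwise-distinct bounce sequences coming from a single interior basepoint. Fix an interior point $p \in P$ and consider the family of trajectories $\{\tau(p,\theta)\}_{\theta \in S^1}$. First I would discard the \emph{singular} directions. A direction $\theta$ is singular precisely when the straight ray from $p$ in direction $\theta$ meets a vertex of some finite development $\D_P(w)$. Since there are only countably many finite words $w \in \L_P$, and each $\D_P(w)$ is tiled by finitely many copies of $P$ and so has finitely many vertices, there are only countably many singular directions. Hence the set $S \subset S^1$ of nonsingular directions is co-countable, and in particular uncountable.

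The heart of the argument is to show that $\theta \mapsto \B_+(p,\theta)$ is injective on $S$. Suppose toward a contradiction that $\theta_1 \neq \theta_2$ lie in $S$ but $\B_+(p,\theta_1) = \B_+(p,\theta_2) = w$. In the common infinite development $\D_P(w)$, both forward trajectories lift to straight rays $\rho_1, \rho_2$ emanating from $p$ in directions $\theta_1, \theta_2$, and both rays cross the same infinite sequence of developed edges $\hat E_1, \hat E_2, \dots$ in the same order. Because the billiard flow moves at unit speed and a nonsingular forward trajectory is defined for all forward time, $\rho_1$ meets $\hat E_n$ at arclength $R_n \to \infty$. Each $\hat E_n$ is an isometric copy of an edge of $P$, hence has length at most the maximal edge length $\ell$, so every point of $\hat E_n$ lies at distance at least $R_n - \ell$ from $p$, and thus $\operatorname{dist}(p, \hat E_n) \to \infty$. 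Consequently the set of directions from $p$ meeting the segment $\hat E_n$ is an arc of angular width at most $\ell/(R_n - \ell)$, which tends to $0$. Since both $\theta_1$ and $\theta_2$ belong to this shrinking arc for every $n$, we conclude $\theta_1 = \theta_2$, a contradiction.

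Granting this injectivity, the co-countable set $S$ maps to uncountably many distinct forward bounce sequences. Since the full bounce sequence $\B(p,\theta)$ contains $\B_+(p,\theta)$ as its forward half, distinct nonsingular directions also produce distinct full bounce sequences, so $\theta \mapsto \B(p,\theta)$ is injective on $S$ as well. Therefore $\B(P) \supseteq \{\B(p,\theta) : \theta \in S\}$ is uncountable, as desired.

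I expect the main obstacle to be making the divergence estimate fully rigorous rather than the bookkeeping. One must confirm that a nonsingular forward trajectory genuinely accumulates unbounded arclength, so that $R_n \to \infty$ and the bounces do not accumulate at a corner (this is where nonsingularity and the unit-speed, all-time definedness of the flow are essential), and one must phrase the angular-width bound uniformly enough to force $\theta_1 = \theta_2$. Both points rest only on the standard facts that developments are tiled by isometric copies of $P$ and that straight rays diverge linearly, but the estimate should be stated with care so that the conclusion $|\theta_1 - \theta_2| \leq \ell/(R_n-\ell) \to 0$ is airtight.
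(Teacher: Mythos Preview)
Your proof is correct and follows essentially the same strategy as the paper: fix an interior basepoint, discard the countably many singular directions, and argue that distinct nonsingular directions yield distinct bounce sequences. The paper phrases the injectivity step as ``the distance between the two unfolded rays eventually exceeds $\operatorname{diam}(P)$, so a vertex must fall between them,'' whereas you phrase it dually as ``the angular width of the $n$-th developed edge seen from $p$ shrinks to zero''; these are two sides of the same linear-divergence estimate, and your self-identified caveat about $R_n \to \infty$ is exactly the point that needs care.
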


\begin{proof}
Let $p$ be a point in the interior of $P$. There are a countable number of singular directions from $p$, so there are uncountably many nonsingular directions. Let $\tau_1$, $\tau_2$ be different nonsingular trajectories from $p$.

Unfold copies of $P$ along $\tau_1$ and $\tau_2$. Eventually, some vertex of $P$ must occur in the unfolding between $\tau_1$ and $\tau_2$, because the distance between trajectories eventually exceeds the diameter of $P$. After a vertex comes between them, the cutting sequences corresponding to $\tau_1$ and $\tau_2$ are different.

Since there are uncountably many such nonsingular trajectories, there are uncountably many corresponding distinct cutting sequences.
\end{proof}

\subsection{Realization of bounce sequences}\label{realizing}

Given a word  $E_1 \ldots E_n \in \A^*$, we say that $\tau(p,\theta)$
\emph{realizes} the word if the first $n$ edges traversed by $\tau(p,\theta)$ are $E_1, \ldots, E_n$.
In particular, a trajectory can realize a word even if the trajectory is singular.

While every trajectory determines a unique bounce sequence, a bounce sequence may be realized by many trajectories, or by none at all. To investigate these possibilities, we define below a geometric interpretation of a cylinder set as a region within the development of a polygon.

\begin{definition}\label{def:corridor}
Let $(E_i)$ be a sequence of edges. The {\em corridor} corresponding to $(E_i)$ is the set of points in $\D_P({(E_i)})$ that lie on a trajectory realizing $(E_i)$.
\end{definition}

\begin{remark}
A nonsingular trajectory $\tau(p, \theta)$ lies in the corridor corresponding to $E_1 \ldots E_N$ if and only if $\B(p, \theta)$ lies in the cylinder set $\mathcal{C}(E_1, \ldots, E_N)$.
\end{remark}

A corridor is finite in length if $(E_i)$ is finite; in that case, it may have ``flared'' ends since the finite trajectories need not be parallel (Figure \ref{fig:flared-corridor}).
We define the {\em width} of a finite corridor to be the infimal $\varepsilon$ so that for every trajectory $\tau$ lying in the corridor, the corridor is entirely contained in the $\varepsilon/2$--tubular neighborhood of $\tau$.

A corridor could be infinite in length if, for instance, $(E_i)$ is periodic. In this case, the corridor is the geometric locus of the maximal family of parallel trajectories realizing the word. In this case we may therefore measure the \emph{width} of an infinite corridor perpendicular to its defining family of parallel trajectories. This definition coincides with the one given above when the bounce sequence is periodic.

\begin{figure}[h]
\begin{subfigure}[b]{.46\textwidth}
\centering
\includegraphics[width=.95\textwidth]{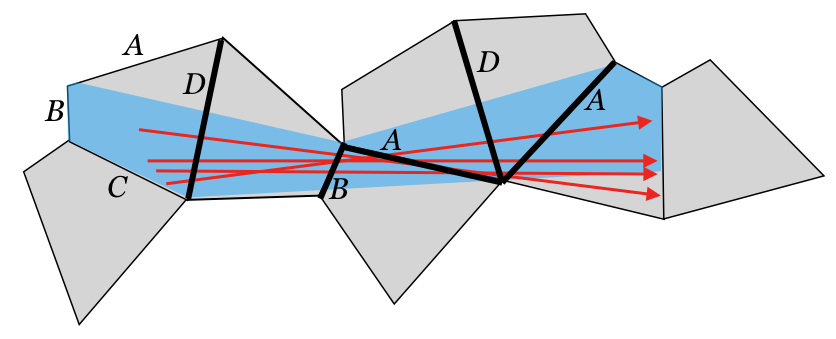}
\caption{A corridor for the finite word $DBADA$, with several example trajectories that realize this word.}
\label{fig:flared-corridor}
\end{subfigure} \ \
\begin{subfigure}[b]{.52\textwidth}
\centering
\includegraphics[width=.95\textwidth]{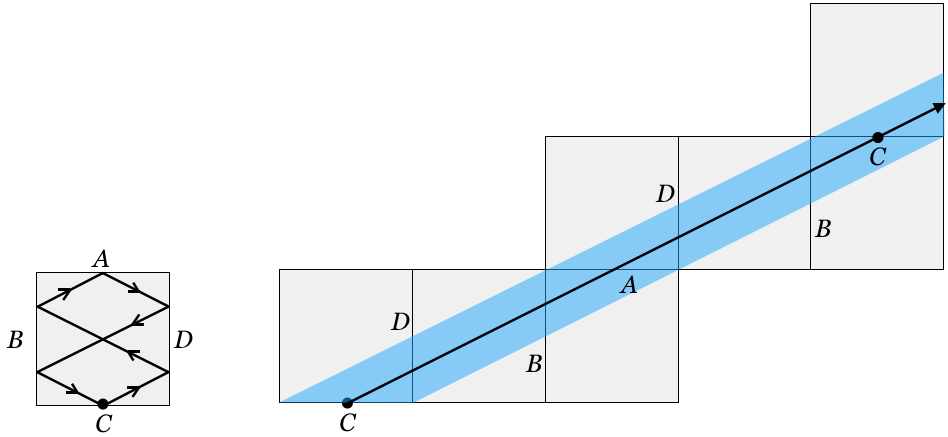}
\caption{Part of an infinite corridor realizing the periodic word $CDBADB$ in the square.}
\label{fig:parallel-corridor}
\end{subfigure}
\caption{Finite and infinite corridors}
\end{figure}

\begin{lemma}\label
{lem:neighborhood_in_corr}
Let $\tau$ be a nonsingular trajectory lying in a corridor about a (finite) word $w=E_1 \ldots E_n$. Then for some $\varepsilon >0$, the $\varepsilon$-tubular neighborhood about $\tau$ also lies in the corridor.
\end{lemma}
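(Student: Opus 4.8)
The plan is to pass to the development $\D_P(w)$, where the billiard trajectory $\tau$ straightens into a Euclidean geodesic $\ell$, and then to exhibit an honest tube of \emph{parallel} geodesics that all realize $w$. Recall that realizing the word $w = E_1 \ldots E_n$ is equivalent to the straightened trajectory crossing, in order, the successive gluing edges $s_1, \ldots, s_n$ of $\D_P(w)$, i.e. the images of $E_1, \ldots, E_n$ along which consecutive copies $P, r_1 P, r_2 r_1 P, \ldots$ are identified. I would first extract the two consequences of nonsingularity that drive everything. First, since $\tau$ hits no vertex, $\ell$ meets each $s_i$ at an \emph{interior} point, so each crossing lies at some positive distance from the two endpoints of $s_i$; let $d>0$ be the minimum of these distances over the finitely many $i$. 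Second, $\ell$ stays a positive distance $\delta>0$ from the (finite) vertex set of $\D_P(w)$, so a sufficiently thin tube about $\ell$ is disjoint from all cone points of the development and is therefore isometric to a genuine strip in the Euclidean plane.

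Working inside this flat strip, I would run a perpendicular-translation estimate. Let $\theta_i \in (0,\pi)$ be the angle at which $\ell$ crosses $s_i$; since $\ell$ actually crosses each segment (rather than running parallel to it), $\sin\theta_i > 0$, and there are only finitely many such angles. Elementary plane geometry shows that translating $\ell$ perpendicular to itself by a distance $t$ moves its intersection point with $s_i$ along $s_i$ by exactly $t/\sin\theta_i$. Setting $\varepsilon < \min\{\delta,\ d\cdot \min_i \sin\theta_i\}$, any geodesic parallel to $\ell$ at perpendicular distance less than $\varepsilon$ remains inside the flat strip and still meets each $s_i$ at a point moved by less than $t/\sin\theta_i < d$ from $\ell \cap s_i$, hence still at an interior point. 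Because each such parallel line passes through the same ordered sequence of copies $P, r_1 P, \ldots, r_n\cdots r_1 P$ as $\ell$ does, it crosses $s_1, \ldots, s_n$ in the correct order, and so it too realizes $w$.

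To conclude, I would observe that every point $q$ of the $\varepsilon$-tubular neighborhood of $\ell$ lies on the geodesic through $q$ parallel to $\ell$, which by the previous paragraph realizes $w$; folding $\D_P(w)$ back onto $P$ turns this geodesic into a genuine billiard trajectory realizing $w$, so $q$ lies in the corridor by definition. As this accounts for every point of the tube, the entire $\varepsilon$-neighborhood lies in the corridor. The step I expect to be the main obstacle is the bookkeeping in the first paragraph: carefully justifying that a thin enough tube about $\ell$ avoids all cone points and is genuinely flat, so that ``translate perpendicularly'' and ``crossing angle'' are even well defined. Once the neighborhood is known to live in a Euclidean strip disjoint from the vertices, the quantitative estimate and the preservation of crossing order are routine consequences of transversality and the finiteness of $w$.
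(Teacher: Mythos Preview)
Your proposal is correct and follows essentially the same route as the paper: pass to the development $\D_P(w)$, use nonsingularity to find a vertex-free $\varepsilon$-tube about the straightened trajectory, and observe that every parallel trajectory in the tube realizes $w$ and hence lies in the corridor. The paper's argument is considerably terser---it simply notes that $\tau$ meets each $E_i$ in its interior, takes an $\varepsilon$-neighborhood of $\tau \cap \D_P(w)$ containing no vertices, and asserts that parallel trajectories therein realize $w$---so your explicit $\sin\theta_i$ estimate is more detail than the paper supplies, but the underlying idea is identical.
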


Observe that this lemma also implies that the corridor about any finite word also has positive width.

\begin{proof}
Take the development $\D_P(w)$ of $P$ over $w$.
Let $\tau=\tau(p,v)$ be a trajectory realizing $w$, possibly singular outside of $\D_P(w)$.

Then $\tau$ intersects edges $E_1$ through $E_n$ in their interiors.
Hence there exists an $\varepsilon$ neighborhood of $\tau \cap \D_P(w)$ that does not contain any vertices. Every trajectory in this neighborhood parallel to $\tau$ also realizes $w$, and hence all lie inside of the corridor.
\end{proof}

In particular, since there are uncountably many trajectories running through this tubular neighborhood, and only countably many of these may be singular, this tells us that every word realized by a trajectory is realized by a \emph{nonsingular} trajectory. This resolves the concern discussed in Remark \ref{rem:singular-word} about the possibility of words arising only from singular trajectories:

\begin{corollary}\label{coro:language_clarification}
A word $w \in \A^*$ is realized by some $\tau(p,\theta)$ if and only if $w \in \L$.
\end{corollary}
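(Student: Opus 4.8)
The plan is to prove the biconditional by treating its two directions asymmetrically: the implication ``$w \in \L \Rightarrow w$ realized'' is essentially definitional, while ``$w$ realized $\Rightarrow w \in \L$'' carries all the content, since a realizing trajectory is permitted to be singular and must be upgraded to a nonsingular one.

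First I would dispatch the easy direction. If $w = E_1\ldots E_n \in \L$, then by definition $w$ occurs as a finite subword of $\B(p,\theta)$ for some nonsingular $\tau(p,\theta)$, say beginning at index $k$. Advancing the basepoint along the same orbit to a point just after its $(k-1)$-st bounce yields a trajectory on the identical (hence still nonsingular) orbit whose first $n$ traversed edges are exactly $E_1,\ldots,E_n$. This trajectory realizes $w$, so this direction is complete with no geometric input beyond reindexing.

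For the converse, suppose $w$ is realized by some $\tau(p,\theta)$, possibly singular. I would pass to the development $\D_P(w)$ and argue, exactly as in the proof of Lemma \ref{lem:neighborhood_in_corr}, that realizing $w$ forces $\tau$ to cross the developed edges $E_1,\ldots,E_n$ in their interiors; in particular, any vertex hit by $\tau$ lies beyond the finite region $\D_P(w)$. Hence some $\varepsilon$-tubular neighborhood of $\tau \cap \D_P(w)$ contains no vertex of the development, and every trajectory parallel to $\tau$ through this neighborhood likewise realizes $w$.

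The crux is then to locate a nonsingular member of this parallel family. The family is parametrized by a perpendicular offset ranging over an open interval and is therefore uncountable, whereas the trajectories of a fixed direction that are singular meet any transversal in only countably many points (the same count, with the roles of point and direction exchanged, that gives only countably many singular directions from a fixed point in Lemma \ref{lem:uncountable}). Choosing a nonsingular $\tau'$ from the uncountable remainder, its forward bounce sequence begins with $E_1\ldots E_n$, so $w$ is a prefix of $\B_+(\tau')$ and thus a subword of $\B(\tau') \in \B(P)$, giving $w \in \L$. I expect this passage from a possibly-singular realizer to a nonsingular one to be the only genuine obstacle: it is the single step where the argument must confront singular orbits, while everything else reduces to the definition of $\L$ and the neighborhood statement already secured by Lemma \ref{lem:neighborhood_in_corr}.
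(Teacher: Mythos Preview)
Your proposal is correct and follows essentially the same route as the paper: the paper deduces the corollary directly from Lemma~\ref{lem:neighborhood_in_corr} by noting that the $\varepsilon$-tubular neighborhood of parallel trajectories is uncountable while only countably many of them can be singular, which is exactly your argument for the substantive direction. Your treatment of the easy direction (reindexing the basepoint along a nonsingular orbit) is left implicit in the paper but is handled correctly here.
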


While $\tau$ has a neighborhood of parallel trajectories about it that all realize the word $E_1 \dots E_n$, note that by definition, \emph{any} trajectory that fits within a corridor also realizes the word $E_1 \dots E_n$, even if it is not parallel to $\tau$. However, we can give a bound on how ``far apart" two trajectories may be while still realizing the same word. We will use the following lemma throughout the paper to show that two trajectories that realize the same word have to be close to parallel.

We first set notation for expressing the distance between the points where a trajectory intersects two edges. If $(p,\theta) \in P \times [0, 2 \pi)$ and $(E_i) = \B_+(p,\theta)$, then we denote the \emph{translation distance} between $\tau(p,\theta) \cap E_i$ and $\tau(p,\theta) \cap E_j$ by $d_{\tau(p,\theta)}(E_i E_{i+1} \ldots E_j)$. Note that the reason we need to specify a basepoint $(p,\theta)$ for the trajectory is because the trajectory $\tau$ may realize the word $E_i \ldots E_j$ multiple times without being periodic. Two different occurrences of a word in an infinite sequence may correspond to different translation distances.

\begin{lemma}\label{lem:realizability}
Let $w = E_1 \ldots E_n$ be any word in $\L$, and let $(p, \theta) \in P \times [0, 2\pi)$ be such that $\tau_+(p, \theta)$ realizes $w$. Then if $(p', \theta') \in P \times [0, 2\pi)$ is such that $\tau_+(p', \theta')$ realizes $w$,
\begin{equation}\label{angle-condition}
| \theta - \theta' | \le
\tan^{-1}\left( \frac{2 \cdot \textrm{diam}(P)}{d(E_1 \dots E_n)}\right).
\end{equation}
where
${d(E_1 \dots E_n)} = \inf_{(p, \theta)} {d_{\tau(p,\theta)}(E_1 \dots E_n)}$
is the infimum translation distance over all trajectories that realize $w$.
\end{lemma}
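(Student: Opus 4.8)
The plan is to pass to the development $\D_P(w)$ and carry out an elementary trigonometric estimate there. Since the development depends only on the word $w$, both $\tau(p,\theta)$ and $\tau(p',\theta')$ unfold to straight Euclidean segments inside the single complex $\D_P(w)$, each crossing the copies of the edges $E_1, \dots, E_n$ in order. Because the first copy of $P$ sits in standard position, the unfolded segment of $\tau(p,\theta)$ has direction exactly $\theta$ and that of $\tau(p',\theta')$ has direction exactly $\theta'$; hence the angle between the two straight lines is precisely $|\theta - \theta'|$, and the task reduces to bounding the angle between two straight segments in $\D_P(w)$.

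First I would record the two ``gates'' these segments share. Let $\ell, \ell'$ denote the unfolded segments for $\tau(p,\theta)$ and $\tau(p',\theta')$, and set $a_1 = \ell \cap E_1$, $a_n = \ell \cap E_n$, $a_1' = \ell' \cap E_1$, $a_n' = \ell' \cap E_n$, where $E_1, E_n$ are the (common) first and last edge copies in the development. Since $a_1, a_1'$ both lie on the single segment $E_1$, whose length is at most $\textrm{diam}(P)$, we get $|a_1 - a_1'| \le \textrm{diam}(P)$, and likewise $|a_n - a_n'| \le \textrm{diam}(P)$. By definition of the translation distance, $|a_n - a_1| = d_{\tau(p,\theta)}(E_1 \dots E_n) \ge d(E_1 \dots E_n)$ and $|a_n' - a_1'| = d_{\tau(p',\theta')}(E_1 \dots E_n) \ge d(E_1\dots E_n)$; in particular each of $\ell, \ell'$ has length at least $d(E_1 \dots E_n)$.

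The estimate itself is then routine trigonometry. Both endpoints $a_1', a_n'$ of $\ell'$ lie within distance $\textrm{diam}(P)$ of the points $a_1, a_n$ on the line carrying $\ell$, so their perpendicular distances to that line are each at most $\textrm{diam}(P)$; traversing $\ell'$, whose length is at least $d(E_1\dots E_n)$, the perpendicular offset to $\ell$ therefore changes by at most $2\,\textrm{diam}(P)$ in total. Assembling a right triangle whose side opposite $|\theta - \theta'|$ has length at most $2\,\textrm{diam}(P)$ and whose side along $\ell$ has length at least $d(E_1 \dots E_n)$ then yields inequality (\ref{angle-condition}).

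I expect the main obstacle to be not the trigonometry but the bookkeeping of the geometric setup: verifying that the two trajectories genuinely develop into the same complex $\D_P(w)$, that the unfolded directions are exactly $\theta$ and $\theta'$, and that $\ell$ and $\ell'$ are in general neither parallel nor share endpoints. This last point is precisely why both gates must be used and why the discrepancies at the two ends combine to produce the factor of $2$ in the numerator; a one-gate argument would fail to control the angle when the two lines cross between the gates. Lemma \ref{lem:neighborhood_in_corr}, which ensures that each realizing trajectory meets the edges $E_1, \dots, E_n$ in their interiors, is what licenses treating $a_1, a_n, a_1', a_n'$ as honest transverse intersection points on the two edge copies.
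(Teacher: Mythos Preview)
Your proposal is correct and follows essentially the same approach as the paper: pass to the development $\D_P(w)$, observe that both trajectories unfold to straight segments crossing the common edge copies $E_1$ and $E_n$, bound the transverse spread by $2\,\textrm{diam}(P)$ (since the intersection points on each shared edge are at most $\textrm{diam}(P)$ apart), bound the longitudinal extent from below by $d(E_1\dots E_n)$, and read off the angle bound. The paper's version simply normalizes $\theta=0$ and phrases the transverse bound as ``the width of the corridor is at most $2\cdot\textrm{diam}(P)$,'' but the content is the same; your write-up is in fact a bit more explicit about why the factor of $2$ appears.
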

\begin{proof}
Without loss of generality assume that $\theta=0$, and take the development $\D_P(w)$. Consider the segment $L$ of $\tau(p,\theta)$ connecting $\tau_+(p',\theta') \cap E_1$ and $\tau_+(p',\theta') \cap E_n$.

As $(p',\theta')$ realizes $w$, $L$ lies in the corridor corresponding to $w$. The horizontal translation of $L$ is then at least $d(E_1, \ldots, E_n)$, and its vertical translation is at most the width of the corridor, which is at most $2 \cdot \text{diam}(P)$, since the development itself has width at most $2 \cdot \text{diam}(P)$.
\end{proof}

\begin{corollary}\label{cor:realizability}
If $\tau(p_{(N)}, \theta_{(N)})$ and $\tau(p'_{(N)}, \theta'_{(N)})$ are sequences of trajectories that realize $E_1 \ldots E_N \in \L_P$, then $\theta'_{(N)}$ and $\theta_{(N)}$ must converge to the same limit as $N \rightarrow \infty$.
\end{corollary}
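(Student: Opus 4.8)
The plan is to combine the explicit estimate of Lemma~\ref{lem:realizability} with the fact that the infimal translation distance $d(E_1 \dots E_N)$ grows without bound as $N \to \infty$. Granting this growth, the corollary follows by a Cauchy argument. Fix $M$, and let $N, N' \ge M$. Since $E_1 \dots E_M$ is a prefix of both $E_1 \dots E_N$ and $E_1 \dots E_{N'}$, the trajectories $\tau(p_{(N)}, \theta_{(N)})$ and $\tau(p_{(N')}, \theta_{(N')})$ both realize $E_1 \dots E_M$, so Lemma~\ref{lem:realizability} applied to this word gives
\[ |\theta_{(N)} - \theta_{(N')}| \le \tan^{-1}\!\left( \frac{2 \cdot \textrm{diam}(P)}{d(E_1 \dots E_M)} \right). \]
As $M \to \infty$ the right-hand side tends to $0$, so $(\theta_{(N)})_N$ is Cauchy and converges to some $\theta_\infty$; the identical argument shows $(\theta'_{(N)})_N$ converges. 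Finally, applying Lemma~\ref{lem:realizability} to the full word $E_1 \dots E_N$, realized by both $\tau(p_{(N)},\theta_{(N)})$ and $\tau(p'_{(N)},\theta'_{(N)})$, bounds $|\theta_{(N)} - \theta'_{(N)}|$ by a quantity that also tends to $0$, forcing the two limits to agree.

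It therefore remains to show $d(E_1 \dots E_N) \to \infty$. Because $d(E_1 \dots E_N)$ is the infimum, over realizing trajectories, of the distance between the $E_1$- and $E_N$-crossings, and because in a development a trajectory unfolds to a straight segment, this infimum equals the infimal arclength of the corresponding segment; I would prove it is bounded below by a linear function of $N$. The idea is to bound the number of bounces per unit length. Let $\alpha_{\min}$ be the smallest angle of $P$ and set $K_0 = \lceil \pi/\alpha_{\min} \rceil$; unfolding the wedge at a vertex shows that a trajectory makes at most $K_0$ bounces off edges incident to a single vertex before leaving that vertex's neighborhood. Choose $\delta_0 > 0$ smaller than both the shortest edge of $P$ and the least distance between non-adjacent edges. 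Then whenever the segment between two consecutive bounces has length less than $\delta_0$, the two edges must be adjacent and both bounce points must lie near their common vertex; consequently a maximal run of such ``short'' segments consists of bounces off edges all incident to one vertex, and so contains at most $K_0$ terms. Hence among the first $N$ bounces at least $(N - 1 - K_0)/(K_0 + 1)$ of the intervening segments have length $\ge \delta_0$, giving arclength at least $\delta_0 (N - 1 - K_0)/(K_0 + 1) \to \infty$, uniformly over realizing trajectories.

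The main obstacle is the geometric claim underlying the linear bound: that short inter-bounce segments localize at a single vertex and that corner episodes have uniformly bounded length, equivalently, that a trajectory cannot make arbitrarily many bounces within a bounded arclength. The wedge-unfolding estimate controls a single corner, but care is required to rule out a trajectory that shuttles among several nearby edges and accumulates; this is precisely where the choice of $\delta_0$ below both the minimal edge length and the minimal gap between non-adjacent edges is used, to force every short run to remain within one wedge. Once this local finiteness is in hand, the linear lower bound on arclength, and hence the Cauchy argument and the corollary, follow as above.
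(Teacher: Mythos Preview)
The paper offers no proof of this corollary, treating it as immediate from Lemma~\ref{lem:realizability}; your argument supplies exactly the step the paper suppresses, namely that $d(E_1\dots E_N)\to\infty$, and your Cauchy deduction from the lemma is correct.

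One point in your arclength bound needs tightening. Your stated choice of $\delta_0$ (less than the shortest edge and the minimal gap between non-adjacent edges) guarantees that a short segment joins \emph{adjacent} edges, but not that the two bounce points are close to the common vertex: if $A$ and $B$ meet at $v$ with angle $\alpha$ and $a\in A$, $b\in B$, then the law of cosines gives only $|a-b|\ge \sin\alpha\cdot\max(|a-v|,|b-v|)$, so $|a-b|<\delta_0$ forces $|a-v|,|b-v|<\delta_0/\sin\alpha_{\min}$, not $<\delta_0$. Consequently, to ensure that two successive ``common vertices'' along a short run coincide (so that the whole run stays in one wedge), you need $2\delta_0/\sin\alpha_{\min}$ to be smaller than the shortest edge. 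Taking $\delta_0<\tfrac{1}{2}\sin(\alpha_{\min})\cdot(\text{shortest edge length})$ in addition to your stated bounds repairs this, and then the wedge-unfolding cap of $K_0$ bounces per run and your linear lower bound on arclength go through exactly as written.
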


\subsection{Aperiodicity and uniqueness of realizations}

It is a standard result that a trajectory is periodic if and only if it has a periodic bounce sequence, hence any aperiodic trajectory has an aperiodic bounce sequence. We show in this section that, up to choosing basepoints, an aperiodic realizable bounce sequence has a \emph{unique} realizing trajectory.

\begin{theorem}[Galperin--Kruger--Troubetzkoy, \cite{GKT}, Theorem 2]\label{thm:GKT}
For any polygonal table $P$ and aperiodic $(E_i) \in \A^\N$, there exists at most one pair $(p, \theta) \in \partial P \times S^1$ such that $\B_+(p,\theta) = (E_i).$
\end{theorem}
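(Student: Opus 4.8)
The plan is to suppose, for contradiction, that two distinct pairs $(p,\theta)$ and $(p',\theta')$ in $\partial P \times S^1$ both realize the aperiodic sequence $(E_i)$, and to deduce that $(E_i)$ must in fact be periodic. I would organize the argument around the two ways the pairs can differ: first their directions, then (once the directions are shown to agree) their basepoints.

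\emph{Step 1: the directions agree.} Since $\tau_+(p,\theta)$ and $\tau_+(p',\theta')$ both realize every prefix $E_1\ldots E_N$, I can feed the constant sequences $\theta_{(N)}\equiv\theta$ and $\theta'_{(N)}\equiv\theta'$ into Corollary \ref{cor:realizability}, which forces $\theta=\theta'$. (Concretely, Lemma \ref{lem:realizability} bounds $|\theta-\theta'|$ by $\tan^{-1}(2\operatorname{diam}(P)/d(E_1\ldots E_N))$, and the infimal translation distance $d(E_1\ldots E_N)\to\infty$ because no trajectory can accumulate infinitely many bounces within finite length: a straight line meets only finitely many edges in the unfolded fan of any single corner, and a compactness argument promotes this from a single realizing trajectory to the infimum.) Thus after Step 1 the two trajectories are parallel.

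\emph{Step 2: the basepoints agree.} With $\theta=\theta'$, either $p'$ lies on the orbit of $(p,\theta)$ — in which case $(E_i)$ equals a nontrivial shift of itself and is therefore periodic — or $\tau:=\tau_+(p,\theta)$ and $\tau':=\tau_+(p',\theta')$ are genuinely distinct parallel trajectories. In the latter case, since they meet the same edge at each bounce, every reflection is one common plane isometry applied to both, so the perpendicular distance $\delta>0$ between them is preserved at each bounce and hence is constant for all time. Consequently every trajectory in the closed beam bounded by $\tau$ and $\tau'$ shares the itinerary $(E_i)$ forever; in particular no vertex is ever hit inside the beam, so in $\D_P((E_i))$ the beam unfolds to an infinite straight Euclidean strip of fixed positive width with no vertices in its interior.

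Finally I would argue that such an infinite positive-width beam forces $(E_i)$ to be periodic. The directional flow at angle $\theta$ induces a first-return map $T$ to $\partial P$ that is a piecewise isometry preserving arc length, and because every point of the beam's cross-section $I$ shares the full itinerary (and the beam meets no vertex), each iterate restricts to $I$ as a single isometry onto an interval $T^{k}(I)\subseteq\partial P$ of length at least $\delta$. Since $\partial P$ is compact, these equal-minimum-length intervals cannot stay disjoint; combining this overlap with Poincar\'e recurrence for the measure-preserving map $T$, I would upgrade it to an \emph{exact} return $T^{k}(I)=I$ with matching direction, which makes the boundary trajectory periodic and hence $(E_i)$ periodic, the desired contradiction. \textbf{The main obstacle is exactly this last step}: promoting approximate recurrence of length-$\ge\delta$ intervals in the compact set $\partial P$ to an exact coincidence of cross-sections. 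This is where the positive width $\delta$ and the absence of interior vertices are indispensable, as they rule out the ``spreading'' that aperiodic parallel families of \emph{zero} width display (the Sturmian irrational-rotation picture); equivalently, a maximal positive-width beam is a flat cylinder, and cylinders are swept out by periodic trajectories.
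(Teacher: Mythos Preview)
The paper does not give its own proof of this statement: Theorem~\ref{thm:GKT} is quoted as Theorem~2 of \cite{GKT} and used as a black box (the very next item, Corollary~\ref{coro:aperiodic_traj_are_unique}, simply \emph{applies} it). So there is nothing in the paper to compare your argument against; what follows is a brief assessment of your sketch on its own merits.

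Your Step~1 and the first case of Step~2 are fine. In the second case of Step~2 you correctly arrive at a positive-width parallel beam, and you correctly isolate the only real difficulty: upgrading ``the iterates of the cross-section overlap'' to periodicity. Note, however, that your first-return map $T$ is not a map on $\partial P$ alone---the direction changes at every bounce, so the dynamics live on $\partial P\times S^1$, and the beam at step $k$ is $J_k\times\{\theta_k\}$. When the set of directions $\{\theta_k\}$ is finite (rational tables), your pigeonhole/Poincar\'e argument closes immediately and in fact more cheaply than you indicate: the moment $J_k\cap J_0\neq\emptyset$ \emph{with} $\theta_k=\theta_0$, any $q$ in the overlap has forward itinerary both $(E_1,E_2,\ldots)$ and $(E_{k+1},E_{k+2},\ldots)$, since every point of $I$ shares the same infinite code; hence $(E_i)$ is $k$-periodic. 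No ``exact return $T^k(I)=I$'' is needed.

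For irrational tables the directions $\theta_k$ need never repeat, so the overlap you obtain is only in $\partial P$, not in phase space, and the argument above does not fire; your closing ``maximal positive-width beam is a flat cylinder'' also presupposes a finite-area translation surface, i.e.\ rationality. This is the genuine gap. The original \cite{GKT} argument handles both cases at once, but the present paper does not reproduce it, so you would need to consult that reference (or supply an alternative finish for the irrational case) to complete the proof.
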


\begin{corollary}\label{coro:aperiodic_traj_are_unique}
On any polygonal table, there exists at most one trajectory realizing a given aperiodic bounce sequence $(E_i)_{i \in \Z}$.
\end{corollary}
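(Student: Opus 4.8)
The goal is to prove Corollary \ref{coro:aperiodic_traj_are_unique}: on any polygonal table, there exists at most one trajectory realizing a given aperiodic bounce sequence $(E_i)_{i \in \Z}$. We already have in hand the Galperin–Kruger–Troubetzkoy result (Theorem \ref{thm:GKT}), which gives uniqueness for \emph{forward} trajectories with basepoint on $\partial P$: for aperiodic $(E_i) \in \A^\N$ there is at most one $(p,\theta) \in \partial P \times S^1$ with $\B_+(p,\theta) = (E_i)$.

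The plan is to reduce the bi-infinite statement to this one-sided statement. Suppose two trajectories $\tau(p,\theta)$ and $\tau(p',\theta')$ both realize the same aperiodic bi-infinite sequence $(E_i)_{i\in\Z}$. I would first note that realizing the same bi-infinite sequence means, in particular, that the two trajectories meet the same consecutive edges in the development, so we may normalize both to start at a common bounce. Concretely, since each trajectory hits edge $E_0$ (say) at a well-defined point, I would replace each trajectory by its basepoint where it crosses $E_0$: this puts a basepoint on $\partial P$ for each, and records a forward direction. The forward tail of each trajectory from this crossing is then a forward trajectory based on $\partial P$ realizing the one-sided aperiodic sequence $(E_i)_{i \ge 1}$.

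Next I would apply Theorem \ref{thm:GKT} to this common forward tail. A bi-infinite sequence that is aperiodic has an aperiodic forward tail, so the hypothesis of the GKT theorem is satisfied. The theorem then forces the two basepoints on $\partial P$ and their forward directions to coincide: the pair $(p,\theta) \in \partial P \times S^1$ determining the forward trajectory is unique. Since a billiard trajectory is entirely determined (in both time directions) by any single basepoint together with its direction, having the same $(p,\theta)$ means the two original bi-infinite trajectories are literally the same trajectory. This completes the reduction.

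The main subtlety — and the step I would be most careful about — is the bookkeeping between ``trajectory'' (an unbased geometric object, the full orbit) and ``based forward trajectory'' $(p,\theta)$ as used in Theorem \ref{thm:GKT}. One must make sure that choosing the basepoint on $E_0$ is canonical enough that two trajectories realizing the \emph{same} bi-infinite sequence really do yield the \emph{same} one-sided sequence from that crossing, with consistent indexing. Aperiodicity is what makes this clean: it rules out the degenerate possibility that a single geometric trajectory could be ``re-based'' at a shifted occurrence of the word and thereby masquerade as a distinct realization, and it is exactly the hypothesis GKT requires. Once the indexing is pinned down, the argument is a direct invocation of Theorem \ref{thm:GKT} together with the fact that a direction and a boundary point determine the whole orbit.
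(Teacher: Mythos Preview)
Your approach is essentially the paper's: rebase on the edge $E_0$ and invoke Theorem~\ref{thm:GKT} on the one-sided tail. The paper applies GKT twice (to the forward tail $(E_i)_{i\ge 1}$ and the backward tail $(E_i)_{i\le -1}$) and then checks that the two resulting boundary pairs agree at $E_0$ via optical reflection; you apply it once and observe that a single $(p,\theta)\in\partial P\times S^1$ already determines the full bi-infinite orbit. Your version is slightly more economical.

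One sentence needs repair: ``A bi-infinite sequence that is aperiodic has an aperiodic forward tail'' is false for arbitrary sequences in $\A^\Z$ (e.g.\ a random past glued to a periodic future). What is true, and what you need, is that if a \emph{trajectory} realizes $(E_i)_{i\in\Z}$ and the forward tail $(E_i)_{i\ge 1}$ were periodic, then the forward trajectory (hence the full trajectory, hence the full sequence) would be periodic---this is the standard ``periodic bounce sequence $\Leftrightarrow$ periodic trajectory'' fact the paper cites just above Theorem~\ref{thm:GKT}. Insert that one line and the argument is complete; the paper's proof leaves this implicit as well.
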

\begin{proof}
Apply Theorem \ref{thm:GKT} twice to get two pairs $(p_\pm, \theta_\pm)$ of points and angles such that $(p_+, \theta_+)$ realizes $(E_i)_{i=1}^\infty$ and $(p_-, \theta_-)$ realizes $(E_i)_{i=-\infty}^{-1}$. In order for a trajectory $\tau$ to realize $(E_i)_{i \in \Z}$, it must be that $p_+=p_-$ lies on the edge labeled by $E_0$ and $\theta_+$ and $\theta_-$ obey the law of optical reflection. These points and angles are unique, so there is at most one trajectory realizing $(E_i)_{i \in \Z}$.
\end{proof}

The uniqueness of realizations of realizable aperiodic bounce sequences implies the following useful result, which tells us that as we develop along any aperiodic trajectory, the width of the corridor associated to the corresponding word goes to $0$. We use the following Corollary in \S \ref{sec:adjacency}, when we show that we can use the bounce spectrum to construct adjacency.

\begin{corollary}\label{coro:thinning_corridor}
Let $\tau$ be an aperiodic trajectory on a table $P$. Then $\tau$ passes arbitrarily close to the vertices of $P$.
\end{corollary}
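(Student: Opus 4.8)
The plan is to argue by contradiction, leveraging the uniqueness of realizations from Corollary \ref{coro:aperiodic_traj_are_unique}. Suppose $\tau$ is aperiodic but does \emph{not} pass arbitrarily close to the vertices of $P$; then there is some $\delta > 0$ so that every point of $\tau$ lies at distance at least $\delta$ from every vertex of $P$. Since $\tau$ is aperiodic, its bi-infinite bounce sequence $(E_i)_{i \in \Z} = \B(\tau)$ is aperiodic, and in the development $\D_P((E_i))$ the trajectory $\tau$ straightens to a single bi-infinite line $\ell$. Because folding $\D_P((E_i))$ back to $P$ is a local isometry sending vertices of tiles to vertices of $P$, the hypothesis transfers to the development: $\ell$ stays at distance at least $\delta$ from every vertex of every developed copy of $P$.

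Next I would exploit this vertex-free strip. Consider the open strip $S$ of width $\delta$ centered on $\ell$ in $\D_P((E_i))$. By construction $S$ contains no vertex of the tiling, so any line $\ell'$ parallel to $\ell$ and contained in $S$ crosses exactly the same bi-infinite sequence of developed edges, in the same order, as $\ell$ does: between $\ell$ and $\ell'$ there is no vertex at which the edge-crossing combinatorics could change. Folding $\ell'$ back to $P$ therefore produces a genuine nonsingular trajectory whose bi-infinite bounce sequence is again $(E_i)_{i \in \Z}$. Phrased in the language preceding the statement, this says precisely that the corridor about every finite truncation $E_{-N}\cdots E_N$ has width bounded below by $\delta$, so these corridors fail to thin to zero.

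The folded image of $S$ in $P$ has positive area, whereas any single billiard orbit is one-dimensional; hence $S$ must fold to more than one distinct trajectory on $P$, and each of these realizes the single aperiodic sequence $(E_i)_{i \in \Z}$. This contradicts Corollary \ref{coro:aperiodic_traj_are_unique}, which guarantees that an aperiodic bounce sequence admits \emph{at most one} realizing trajectory. Hence no such $\delta$ exists, and $\tau$ must pass arbitrarily close to a vertex of $P$, as claimed.

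The step I expect to require the most care is the passage from ``no vertex lies in the strip $S$'' to ``all parallel lines in $S$ realize the same sequence over all of $\Z$.'' One must confirm that the absence of vertices between $\ell$ and $\ell'$ forces identical edge-crossing combinatorics on the full bi-infinite window, not merely on finite pieces, and that the distance estimate holds uniformly across the entire infinite development (this is exactly where the hypothesis is used for every index $i$). Both facts follow from the structure of the development recorded earlier---its tiles carry alternating orientation and the shared edges between consecutive tiles are the only loci where the crossing sequence can change, as in the proof of Lemma \ref{lem:neighborhood_in_corr}---but making the bi-infinite bookkeeping precise is the delicate point of the argument.
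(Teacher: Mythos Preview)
Your proposal is correct and follows essentially the same approach as the paper: argue by contradiction, obtain a vertex-free tubular neighborhood of $\tau$, observe that it contains a family of parallel nonsingular trajectories all realizing the same aperiodic bounce sequence, and contradict Corollary~\ref{coro:aperiodic_traj_are_unique}. The paper's proof is just a terser version of yours, compressing your development/strip argument into the single phrase ``the neighborhood contains a family of parallel trajectories''; your added care about the bi-infinite bookkeeping is exactly what is being swept under the rug there.
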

\begin{proof}
Suppose there exists some $\varepsilon > 0$ such that the $\varepsilon$ neighborhood about $\tau$ contains no vertices of $P$. Then the neighborhood contains a family of parallel trajectories, contradicting the uniqueness of the realization of the bounce sequence of $\tau$.
\end{proof}

As mentioned earlier, given a polygonal table $P$ with edges labeled in an alphabet $\A$, one can consider the collection $\A^\Z$.
We will often find it useful to think of $\A^\Z$ as a topological space and to consider the topological closure of $\B(P)$ within it. The goal of this section is to describe the topologies on $\A^\Z$ and $\B(P)$.

\begin{remark}\label{rem:repeat}
It is clear that the bounce spectrum $\B(P)$ is a proper subset of $\A^\Z$, since for instance a polygonal table does not admit a bounce sequence where an edge label occurs twice in a row.
\end{remark}

\begin{question}
Other than those with consecutive repeated edge labels, are there any other words that never appear in the bounce language of \emph{any} billiard table?
\end{question}

\section{Common prefixes and ideal trajectories}\label{sec:emitters}

\subsection{Common prefixes}

Our next goal is to identify how the adjacency structure of $P$ encodes itself in the bounce spectrum.
If two edges $A$ and $B$ meet in a vertex, then there are points on $A$ that are arbitrarily close to $B$, and vice versa. Now under the billiard flow, if $(p,\theta)$ and $(p',\theta')$ are close to each other, then they stay close to each other for a definite amount of time. This means that if $A$ and $B$ are adjacent, we can find trajectories emanating from points on $A$ that can track trajectories emanating from points on $B$ for an arbitrarily long time.

This fact can be visualized as in Figure \ref{fig:emitters}. Each of the edges of our polygon $P$ may be thought of as a neon sign of a different color radiating light in every direction.

\begin{figure}[!h]
\centering
\includegraphics[width=3.5in]{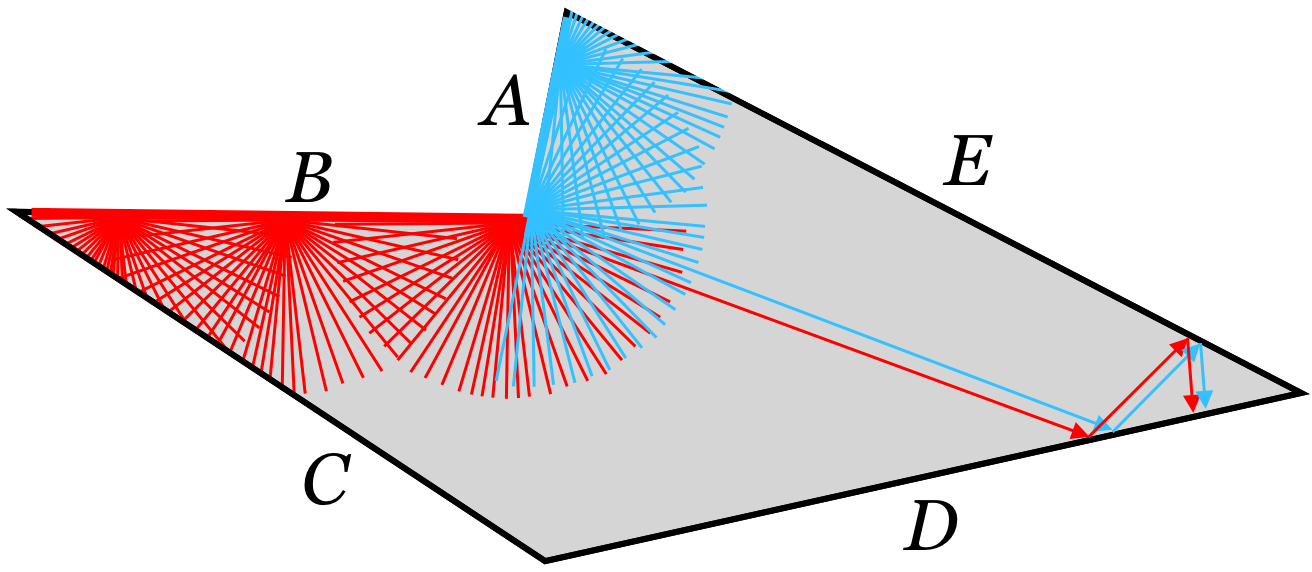}
\caption{The blue and red edges $A$ and $B$ form a set of common prefixes for the sequence $DED\ldots$.}
\label{fig:emitters}
\end{figure}

We now rephrase this visualization in terms of bounce sequences:

\begin{definition}
We say that a set of edges $\{A_1, \ldots, A_k\}$ is a set of \emph{common prefixes} if there exists a sequence $(E_i) \in \A^\N$ such that
\begin{center}
  {\em $(A_j, E_1, E_2, \ldots) \in \overline{\B_+(P)}$ for each $j=1, \ldots, k$.}
\end{center}
In this case we say that $A_j$ is a \emph{prefix} for the sequence $(E_i)_{i \in \N}$.
We may also sometimes speak of an edge being a prefix for a trajectory whenever this trajectory realizes a bounce sequence $(E_i)$ that has the edge as a prefix.
\end{definition}

Note that in section \S\ref{subsec:general_angles}, we will use \emph{insertions}, which are letters (edge labels) inserted into the ``middle'' of a bi-infinite sequence, which is an extension of the idea of the prefix that we use here for a one-sided infinite sequence.

\begin{definition}
  We say that a set $\{A_1, \ldots, A_n\}$ is a \emph{realizable set of common prefixes} if there exists an $(E_i) \in \B_+(P)$ such that $\{A_1, \ldots, A_n\}$ are all prefixes for $(E_i)$.
\end{definition}

Not all sets of common prefixes are realizable.
For example, consider the non-convex hexagon in Figure \ref{fig:not_realizable}.
Choose some point $a$ on $A$ and a sequence of angles $\theta_n$ approaching $2\pi$ (measured counter-clockwise from the horizontal) such that $\tau_+(a, \theta_n)$ are all nonsingular.
The limit of $\B_+(a,\theta_n)$ will then define some point
$(A, E_1, E_2, \ldots) \in \overline{\B_+(P)}$.
By choosing an appropriate sequence of directions $\eta_n$ we may approximate $(E_i)_{i=1}^\infty$
by trajectories starting from a point $b$ on $B$ and conclude similarly. Thus $\{A, B\}$ is a set of common prefixes. However, the only sequences approximated by $A$ and $B$ all
define trajectories that limit to the line containing $A$ and $B$
(see Lemma \ref{lem:collinear_limiting_points}), and so $\{A,B\}$ is not realizable.

\begin{figure}[h]
  \centering
  \includegraphics[width=5.5in]{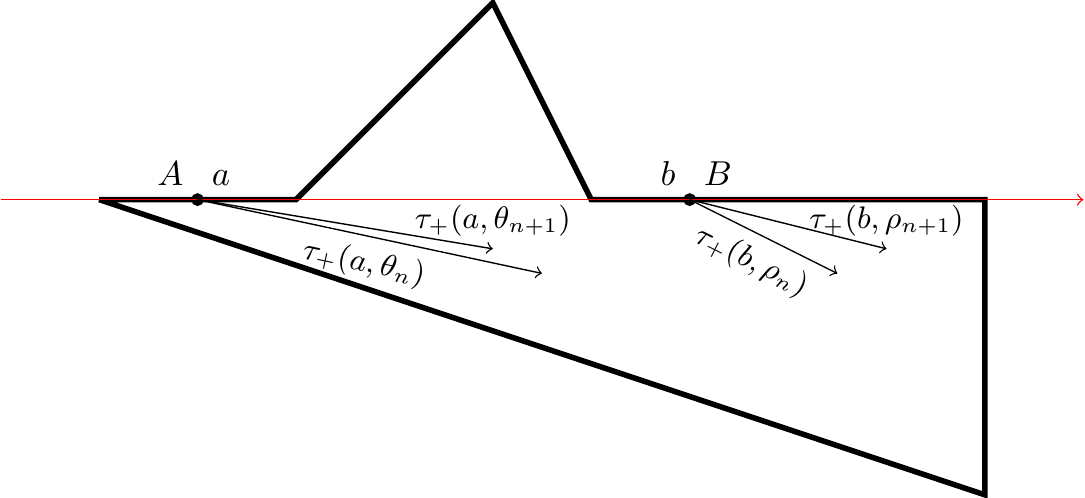}
  \caption{A set of common prefixes $\{A,B\}$ that is not realizable.}
  \label{fig:not_realizable}
\end{figure}

Observe that we cannot currently say anything about the realizability of a set of common prefixes using only information in $\B(P)$, since it is defined in terms of sequences appearing in $\B_+(P)$.
We will see in \S \ref{sec:full_determines_forward} that $\B(P)$ determines $\B_+(P)$. Before attending to this, we will first investigate bounce sequences that are not realizable.

\subsection{Ideal trajectories}\label{sec:ideal}

The following section is written in terms of bi-infinite bounce sequences, but similar results hold for forward bounce sequences, and the proofs are essentially identical to those presented below.

To consider non-realizable sequences as geometric objects, we use our understanding of the topology of $\overline{B(P)}$; in particular, we have that
every sequence $(E_i)_{i \in \Z} \in \overline{B(P)}$ can be expressed as the limit of bounce sequences in $\B(P)$.

\begin{definition}
  Given a sequence $(E_i)_{i \in \Z} \in \overline{\B(P)}$, we define an \emph{ideal trajectory} to be an infinite line in $\D_P((E_i)_{i \in \Z})$, possibly containing vertices. \end{definition}

  Using this language we say the ideal trajectory is associated to $(E_i)_{i \in \Z}$ and vice-versa. As a geometric object, an ideal trajectory should be thought of as a continuation of a (singular) trajectory along a development. Ideal trajectories can be constructed as follows: take $(p_n,\theta_n) \in P \times [0, 2\pi)$ such that $\tau(p_n, \theta_n)$ is nonsingular and the associated bounce sequences $\B(p_n, \theta_n)$ converge to $(E_i)$ in $\overline{\B(P)}$. This can be seen in Figure \ref{fig:ideal_trajectory} where the trajectories $\tau_n$ limit to the ideal trajectory $\tau$; note that in this example $p_n = p$ for all $n$. More formally, by Arzela-Ascoli, the trajectories $\tau(p_n, \theta_n)$ limit to some line lying inside of the development.

\begin{figure}[h]
\centering
\includegraphics[width=6in]{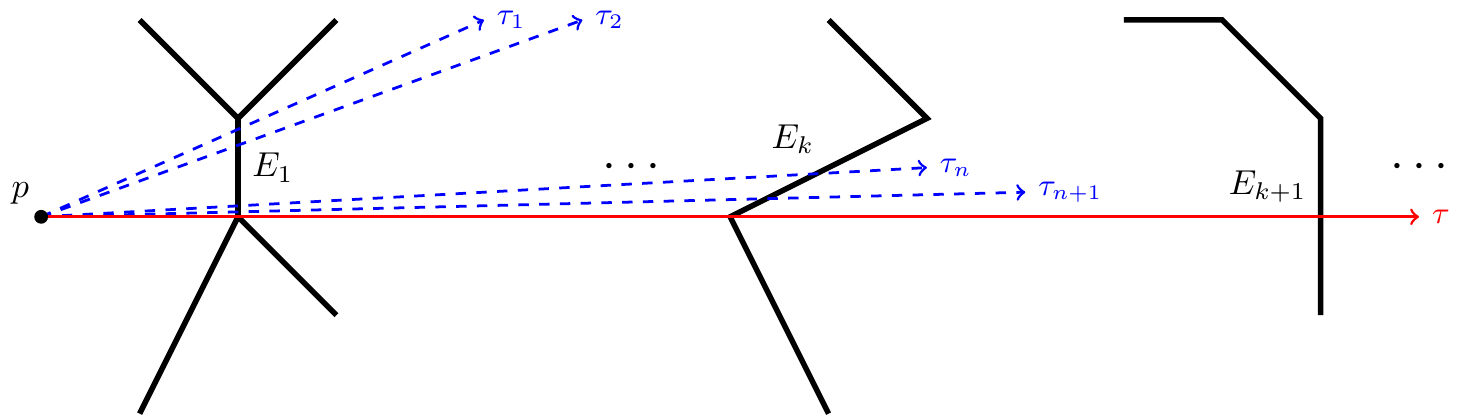}
\caption{An ideal trajectory (solid red) as the limit of nonsingular trajectories (dashed blue) through polygon edges (thick black).}
\label{fig:ideal_trajectory}
\end{figure}

Note that nonsingular trajectories are also ideal trajectories; one can trivially take the constant sequence of bounce sequences for the geometric construction, or consider the trajectory itself as the line in the development. Because of this, we focus our attention on ideal trajectories.

In Corollary \ref{coro:aperiodic_traj_are_unique}, we saw that an aperiodic bounce sequence has (at most) one trajectory that realizes it. We have a similar result for any aperiodic sequence in $\overline{\B(P)}$.
\begin{lemma}\label{lem:ideal_uniqueness}
There is exactly one ideal trajectory that realizes any aperiodic sequence $(E_i)_{i \in \Z} \in \overline{\B(P)}$.
\end{lemma}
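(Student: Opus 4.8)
The plan is to dispatch existence quickly and concentrate on uniqueness, which is the real content. For existence, since $(E_i) \in \overline{\B(P)}$ I choose nonsingular trajectories $\tau(p_n,\theta_n)$ whose bounce sequences converge to $(E_i)$; by the Arzela--Ascoli argument already recorded in the construction preceding the lemma, a subsequence of these trajectories converges in $\D_P((E_i)_{i\in\Z})$ to a line, which is by definition an ideal trajectory realizing $(E_i)$. The substance is therefore to show that any two ideal trajectories $\tau$ and $\tau'$ realizing the aperiodic $(E_i)$, both regarded as lines in the common development $\D_P((E_i)_{i\in\Z})$, must coincide. I would argue this in two stages: first that $\tau$ and $\tau'$ are parallel, and then that parallel forces equal.

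For \emph{parallelism}, for each $N$ the finite sub-development $\D_P(E_1\ldots E_N)$ sits inside $\D_P((E_i))$, and both $\tau$ and $\tau'$ cross the development edges $E_1,\ldots,E_N$. Approximating each ideal trajectory by nonsingular trajectories realizing the prefix $E_1\ldots E_N$ (exactly as in the construction of ideal trajectories), I obtain two sequences of honest trajectories $\tau(p_{(N)},\theta_{(N)})$ and $\tau(p'_{(N)},\theta'_{(N)})$ realizing $E_1\ldots E_N$ whose angles converge to the directions of $\tau$ and $\tau'$ respectively. Corollary \ref{cor:realizability} then forces these two limits to agree, so $\tau$ and $\tau'$ have the same direction. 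Concretely this is powered by Lemma \ref{lem:realizability}: the aperiodicity of $(E_i)$ makes the infimal translation distance $d(E_1\ldots E_N)$ diverge, so the angular bound $\tan^{-1}\!\left(2\,\mathrm{diam}(P)/d(E_1\ldots E_N)\right)$ collapses to $0$.

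For \emph{coincidence}, suppose toward a contradiction that $\tau\neq\tau'$ though they are parallel. They bound a strip $S$ of positive width in the development. Every line in $S$ parallel to $\tau$ is trapped between $\tau$ and $\tau'$ and hence crosses exactly the same bi-infinite sequence of development edges $\ldots,E_{-1},E_0,E_1,\ldots$; folding back, each such line is a billiard trajectory realizing $(E_i)$. Parametrizing these parallel lines by a transverse coordinate, only countably many can pass through a vertex of the (countable) development, so $S$ contains uncountably many nonsingular trajectories all realizing the aperiodic sequence $(E_i)$. This contradicts Corollary \ref{coro:aperiodic_traj_are_unique}, which permits at most one realizing trajectory; equivalently, such a positive-width parallel family contradicts Corollary \ref{coro:thinning_corridor}. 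Hence $\tau=\tau'$.

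I expect the delicate point to be justifying that $d(E_1\ldots E_N)$ genuinely diverges for an aperiodic sequence---equivalently, that Corollary \ref{cor:realizability} applies here---rather than staying bounded along some subsequence. The cleanest way to see this is that a uniform bound on $d(E_1\ldots E_N)$ would confine the realizing trajectories to a bounded portion of the development crossing unboundedly many edges; by finiteness of the edge set and compactness this would force a repeated configuration and hence a periodic bounce sequence, contradicting aperiodicity. Everything downstream is then a matter of carefully tracking that parallel lines in the development fold back to genuine trajectories realizing the full bi-infinite word.
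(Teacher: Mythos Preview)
Your proof is correct and follows essentially the same route as the paper: existence via Arzel\`a--Ascoli, parallelism via Corollary~\ref{cor:realizability}, and coincidence by arguing that a nontrivial strip between parallel ideal trajectories would contain nonsingular realizing trajectories. The only organizational difference is that the paper splits into the cases $(E_i)\in\B(P)$ and $(E_i)\notin\B(P)$, deriving the contradiction in the second case from non-realizability rather than from Corollary~\ref{coro:aperiodic_traj_are_unique}; your unified use of Corollary~\ref{coro:aperiodic_traj_are_unique} is slightly cleaner, and your flagging of the divergence of $d(E_1\ldots E_N)$ is a point the paper absorbs silently into Corollary~\ref{cor:realizability}.
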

\begin{proof}
If $(E_i)_{i \in \Z} \in \B(P)$ this follows by Corollary \ref{coro:aperiodic_traj_are_unique}. If instead $(E_i)_{i \in \Z} \not\in \B(P)$, we know that since $(E_i)_{i \in \Z} \in \overline{\B(P)}$ there exists some sequence of trajectories  $\tau(p_n, \theta_n)$ whose bounce sequences limit to $(E_i)_{i \in \Z}$. By Arzela-Ascoli, these trajectories limit to some line lying inside of the development. We now argue that this limit line is unique. Any other sequence of trajectories whose bounce sequences limit to $(E_i)_{i \in \Z}$ also has a limit line. By Corollary \ref{cor:realizability}, the angles between the trajectories in the two sequences goes to 0 as $n$ goes to infinity, and so the two limit lines must be parallel.

In fact, the limiting lines must coincide. Otherwise, they bound a flat strip in the development whose interior contains no vertices. Any trajectory running through the strip is hence nonsingular and must realize $(E_i)_{i \in \Z}$. But we assumed that $(E_i)_{i \in \Z}$ was not realizable, a contradiction, hence the limiting ideal trajectory must be unique.
\end{proof}

\section{Reconstructing adjacency of edges}\label{sec:adjacency}

We now use the technology of common prefixes to reconstruct adjacency of sides from a polygon's bounce spectrum. As a part of proving this result, we show that the bounce spectrum of a billiard table determines its forward bounce spectrum. This resolves the concern about the potential discrepancy between the information contained in two sets discussed in Remark \ref{rem:singular-word}.

The idea behind our analysis of adjacency is that if two edges $A,B$  are adjacent, then there is a vertex between them and there is a trajectory starting at this vertex that is nonsingular in the forward direction (Figure \ref{fig:adjacency-idea}). Thus, we can find trajectories close to the vertex, one starting along $A$ and one starting along $B$, whose bounce sequences match that of the singular trajectory for arbitrarily many bounces. The following sections make this idea precise and complete, using the ideas of common prefixes and ideal trajectories developed in \S\ref{sec:emitters}.

\begin{figure}[ht]
\centering
\includegraphics[width=2.5in]{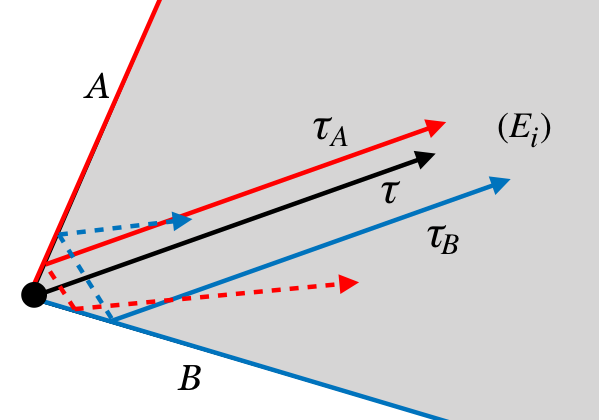}
\caption{Since $A$ and $B$ are adjacent, there are trajectories $\tau_A, \tau_B$ based at points on $A$ and $B$ (red and blue, respectively) whose bounce sequences have tails that match that of a singular trajectory $\tau$ (black) based at the vertex between them.}
\label{fig:adjacency-idea}
\end{figure}

\subsection{Adjacency in convex polygons}

We begin with the simpler case of {\em strictly convex} polygons, i.e. those where all interior angles are in $(0,\pi)$.  For the remainder of the section, the modifier ``strictly'' will be assumed. First, we show that we are actually able to detect convexity.

\begin{proposition}
A polygonal billiard table $P$ is convex if and only if $E_i E_j \in \L$ for each $E_i \neq E_j \in \A$.
\end{proposition}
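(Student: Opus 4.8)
The plan is to prove both implications by relating the two-letter word $E_iE_j\in\L$ to the \emph{mutual visibility} of the edges: the existence of a point $p$ in the interior of $E_i$ and a point $q$ in the interior of $E_j$ whose open connecting segment lies in $\mathrm{int}(P)$. By Corollary \ref{coro:language_clarification}, $E_iE_j\in\L$ if and only if some (possibly singular) trajectory realizes $E_iE_j$, and such a trajectory is exactly a billiard segment that reflects off $E_i$ at a point $p$ and next meets the boundary along $E_j$ at a point $q$. This happens precisely when $E_i$ and $E_j$ are mutually visible, so the proposition reduces to the purely geometric statement that $P$ is strictly convex if and only if every pair of distinct edges is mutually visible.

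For the forward direction, I would suppose $P$ is strictly convex and fix distinct edges $E_i, E_j$. Choosing interior points $p\in E_i$ and $q\in E_j$, strict convexity guarantees that the open segment $pq$ lies in $\mathrm{int}(P)$, so $q$ is the first boundary point met by the ray from $p$ in the direction $\vec{pq}$. Taking this as the outgoing direction of a bounce at $p$ (the required incoming direction is its reflection across the line containing $E_i$, which also points into $\mathrm{int}(P)$) produces a trajectory realizing $E_iE_j$, and hence $E_iE_j\in\L$ by Corollary \ref{coro:language_clarification}.

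For the converse I would argue the contrapositive: if $P$ is not strictly convex it has a reflex vertex $v$, and I would produce a pair of edges with no mutually visible interior points, so that the corresponding two-letter word is absent from $\L$. The reflex vertex creates a \emph{pocket}, the component of $\mathrm{conv}(P)\setminus P$ adjacent to $v$, bounded by a convex-hull ``bridge'' between two vertices of $P$; the edges lining this pocket are shielded from one another by the corner at $v$. In the prototypical case of a notch cut into an otherwise straight piece of boundary, the two collinear edges flanking the notch cannot see each other, since every connecting segment runs over the notch and out of $P$. Concretely, extending an edge incident to $v$ past $v$ into $\mathrm{int}(P)$ until it first exits locates an edge lying across the pocket, and one argues that every segment from this edge to the opposing pocket edge must leave $P$. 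The main obstacle is exactly this last step: verifying \emph{global} invisibility of the chosen pair, not merely that their near-$v$ points fail to see one another. As the notch example shows, the two edges actually incident to the reflex vertex may well be mutually visible through points near their far endpoints, so the pair must be selected carefully. Making this selection and the blocking argument precise---in essence a pocket-visibility lemma for simple polygons---is where the real work lies.
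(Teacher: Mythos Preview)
Your forward direction matches the paper's. For the converse, however, you have made things much harder than necessary, and the difficulty stems from a false belief: you write that ``the two edges actually incident to the reflex vertex may well be mutually visible through points near their far endpoints,'' but in fact the two edges meeting at a reflex (or straight) angle are \emph{never} mutually visible. This is exactly the pair the paper uses, and its entire converse is one sentence.

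To see why, place the vertex $v$ at the origin with edge $A$ along the positive $x$-axis and edge $B$ along the ray at angle $\alpha$, where $\alpha = 2\pi - \theta \in (0,\pi]$ is the exterior angle. The exterior of $P$ at $v$ is the wedge between angles $0$ and $\alpha$, so the $y>0$ side of $A$ is exterior to $P$; since an edge of a polygon separates interior from exterior consistently along its entire length, this holds all along $A$, not just near $v$. Now for any interior points $a=(r,0)\in A$ and $b=(s\cos\alpha,\,s\sin\alpha)\in B$ with $r,s>0$, the vector $b-a$ has $y$-component $s\sin\alpha \ge 0$. If $\alpha<\pi$ this is strictly positive, so the segment leaves $a$ toward the exterior side of $A$ and is immediately outside $P$; if $\alpha=\pi$ the segment lies on the line containing $A$ and $B$ and never enters $\mathrm{int}(P)$. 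Either way $AB\notin\L$.

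So no pockets, bridges, or carefully selected distant pairs are needed; the obvious pair at the bad vertex already works, and the ``real work'' you anticipate is an artifact of the mistaken visibility claim. (One further small gap: a non-strictly-convex polygon could have a straight angle without any reflex vertex, so your contrapositive should allow $\theta=\pi$ as well as $\theta>\pi$.)
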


\begin{proof}
If $P$ is (strictly) convex, then it contains all line segments between points in $P$. In particular, given any two edges $E_i$ and $E_j$, $P$ contains a nonsingular line segment in the interior of $P$ connecting interior points of the two, hence $E_i E_j \in \L$. (Observe that this is where we need strict, rather than regular, convexity).

Conversely, if $P$ is not strictly convex then it contains some reflex or straight angle. The edges forming this angle have no straight line between them lying in the interior of $P$, so there is no trajectory going from one to the other, and so $E_i E_j \notin \L$.\end{proof}

The following proof uses common prefixes to reconstruct adjacencyof edges from $\B(P)$ for convex polygons. This proof contains the main ideas of the proof of the general case, which we cover in the next subsection.

\begin{theorem}\label{thm:convex_adjacent}
Let $P$ be a convex polygon. Then
edges $A$ and $B$ are adjacent
if and only if
$\{A,B\}$ is a realizable set of common prefixes.
\end{theorem}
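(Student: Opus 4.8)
The plan is to prove both implications by developing the billiard trajectory near the shared vertex and exploiting the positive width of finite corridors (Lemma \ref{lem:neighborhood_in_corr}) together with the uniqueness of ideal trajectories (Lemma \ref{lem:ideal_uniqueness}).

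For the forward implication, suppose $A$ and $B$ meet at a vertex $v$. I would first choose a direction $\theta$ pointing from $v$ into the interior of $P$ so that the ray $R=\tau_+(v,\theta)$ never meets another vertex; then $R$ is forward-nonsingular with some forward bounce sequence $(E_i)$. The key device is to base the trajectory slightly past the corner: pick $p_\sigma$ on $R$ strictly between $v$ and the first bounce of $R$, and set $\sigma=\tau_+(p_\sigma,\theta)$. Then $\sigma$ is forward-nonsingular, so $(E_i)=\B_+(p_\sigma,\theta)\in\B_+(P)$ is an honest element of the forward bounce spectrum, yet the backward extension of $\sigma$ runs straight into the vertex $v$. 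This is exactly the configuration permitted by Remark \ref{rem:singular-word}: a nonsingular forward trajectory whose backward continuation is singular. Using a forward-nonsingular base (rather than the singular ray from $v$ itself) is what makes $(E_i)$ genuinely realizable.

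I would then show that both $A$ and $B$ are prefixes for this single $(E_i)$. For each $N$, the corridor about $E_1\ldots E_N$ has positive width (Lemma \ref{lem:neighborhood_in_corr}) and contains $\sigma$, whose backward end terminates at the convex corner $v$. Because the interior angle at $v$ is strictly less than $\pi$, the backward ends of the trajectories filling this corridor straddle $v$: those lying to one side of $\sigma$ exit $P$ through $A$ and those on the other side exit through $B$. Reinterpreting a backward exit through $A$ (resp.\ $B$) as a first bounce off $A$ (resp.\ $B$) shows $AE_1\ldots E_N\in\L$ and $BE_1\ldots E_N\in\L$ for every $N$. Perturbing to forward-nonsingular representatives and letting $N\to\infty$ gives $(A,E_1,E_2,\ldots),(B,E_1,E_2,\ldots)\in\overline{\B_+(P)}$, so $\{A,B\}$ is a realizable set of common prefixes.

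For the reverse implication, suppose $\{A,B\}$ is a realizable set of common prefixes witnessed by $(E_i)\in\B_+(P)$, realized by $\sigma$. For each $N$ there are trajectories $\rho_N^A,\rho_N^B$ whose first bounces are $A$ and $B$ respectively and which then realize $E_1\ldots E_N$. When $(E_i)$ is aperiodic, Corollary \ref{cor:realizability} and Lemma \ref{lem:ideal_uniqueness} force the post-bounce parts of both families to converge to the unique ideal trajectory of $(E_i)$, namely $\sigma$; hence the first-bounce points converge to the single point $w\in\partial P$ at which the backward extension of $\sigma$ first meets the boundary. Since these points lie on $A$ (resp.\ $B$) and edges are closed, $w\in\overline{A}\cap\overline{B}$, and as distinct non-adjacent edges of a polygon have disjoint closures, $A$ and $B$ must be adjacent. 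The main obstacle is the periodic case: when $(E_i)$ is periodic, $\sigma$ belongs to a positive-width band of parallel periodic trajectories rather than a unique line, so the convergence-of-bounce-points argument does not apply verbatim. I expect to treat this by analyzing how such a band can be capped by a first bounce off $A$ and off $B$---which can only occur where the band's boundary meets a vertex---forcing that vertex to lie on both edges. Making precise the straddling of the corner in the forward direction and this capping phenomenon in the reverse direction, both of which use convexity ($\angle v<\pi$) essentially, is where the real work lies.
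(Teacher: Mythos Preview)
Your approach matches the paper's: for the forward direction both emit a forward-nonsingular ray from the shared vertex and use the positive-width corridor (Lemma~\ref{lem:neighborhood_in_corr}) to manufacture approximating trajectories with first bounce $A$ and first bounce $B$, and for the converse both invoke uniqueness of the realizing trajectory (Theorem~\ref{thm:GKT}) to collapse the $A$- and $B$-based approximants onto a single boundary point, which convexity then forces to be a shared vertex. Your flagged concern about periodic $(E_i)$ is legitimate but actually goes beyond the paper, which in its converse simply writes ``suppose that $\{A,B\}$ is a set of realizable common prefixes for an \emph{aperiodic} bounce sequence $(E_i)$'' and does not treat the periodic case separately.
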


\begin{proof}
Suppose first that $A$ and $B$ are adjacent at vertex $p$. Choose a direction $\theta$ such that the trajectory $\tau = \tau_+(p,\theta)$ is nonsingular. Let $(E_i) = \B_+(p,\theta)$.

For each $N$, consider the corridor associated to $E_1 \ldots E_N$. By Lemma \ref{lem:neighborhood_in_corr}, there exists an $\varepsilon$-tubular neighborhood about $\tau$ lying in this corridor.
As $A$ and $B$ meet at $p$, we can find $a \in A$ and $b \in B$ that are within $\varepsilon$ of $p$.
Thus the trajectories $\tau_+(a,\theta)$ and $\tau_+(b,\theta)$ remain $\varepsilon$-close to $\tau$ in $\D_P((E_i)_{i=1}^N)$, so they are nonsingular and realize the bounce words $A E_1 \ldots E_N$ and $ B E_1 \ldots E_N$, respectively.
Hence as this holds for arbitrarily high $N$, we see that
\[(A, E_1, E_1, \ldots), (B, E_1, E_2, \ldots) \in \overline{\B_+(P)},\]
i.e., $\{A,B\}$ is a realizable set of common prefixes for the aperiodic bounce sequence $(E_i)$.

Conversely, suppose that $\{A,B\}$ is a set of realizable common prefixes for an aperiodic bounce sequence $(E_i) \in \B_+(P)$.
{Then there are sequences of trajectories with basepoints $a_k$ and $b_k$ on $A$ and $B$, respectively, that approximate $(E_i)$ (Figure \ref{fig:adjacency_corridor}). Theorem \ref{thm:GKT} tells us that there is exactly one trajectory realizing any aperiodic bounce sequence in $\B_+(P)$.
Thus, these sequences of trajectories converge to the same (unique) trajectory $\tau$ that realizes $(E_i)$. Hence the limit points $a = \lim a_k$ and $b = \lim b_k$ of the basepoints for the trajectories are collinear along $\tau$. Since $P$ is convex, $\tau$ intersects $\partial P$ at most twice. Since $\tau$ is nonsingular in the forward direction, it hits $\partial P$ in the interior of an edge that is neither $A$ nor $B$. Therefore at the other intersection of $\tau$ with $\partial P$, $\tau$ must intersect both $A$ and $B$. Thus $a$ and $b$ must coincide, which means that $A$ and $B$ share a point. Therefore $A$ and $B$ are adjacent.}
\end{proof}

\begin{figure}[ht]
\centering
\includegraphics[width=4in]{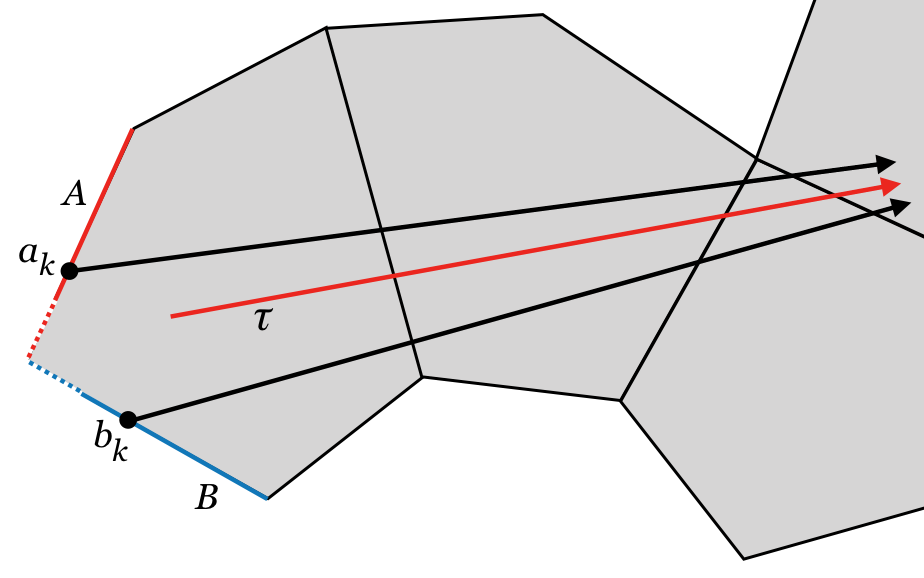}
\caption{Approximating an infinite trajectory $\tau$ by rays based on edges $A$ and $B$, as in the proof of Theorem \ref{thm:convex_adjacent}.}
\label{fig:adjacency_corridor}
\end{figure}

\subsection{Adjacency in non-convex polygons}

The majority of the proof of Theorem \ref{thm:convex_adjacent} involves showing that if two edges are common prefixes, then there exist points on those edges that are collinear. This conclusion about common prefixes containing collinear points is proven without using the hypothesis of convexity, and so the statement also applies to the general non-convex setting. In fact, this collinearity condition also holds even when the set of common prefixes is not realizable.

We record this fact for use in the general case:

\begin{lemma}\label{lem:collinear_limiting_points}
Suppose that $\{A_1, \ldots, A_n\}$ is a set of common prefixes for an aperiodic
bounce sequence
$(E_i)\in \overline{\B_+(P)}$. For each $j=1, \ldots, n$ and each $N>0$, let
\[(a_N^j, \theta_N^j) \in A_j \times S^1\]
denote a pair that realizes $A_j E_1 \ldots E_N$. Then we have the following.
\begin{itemize}
\item For each $j$ the sequence $\theta_{(N)}^j$ converges to some $\theta \in [0, 2\pi)$.
  \item For each $j$, either $a_{(N)}^j$ converges to a point $a_j \in A_j$ or $A_j$ is at angle $\theta$ with the horizontal.
  \item Take these limit points $a_j$, setting $a_j$ to be an arbitrary point of $A_j$ in the case where $A_j$ is at angle $\theta$. Then there exists a point $q$ on $E_1$ such that the points $\{q, a_1, \ldots, a_n\}$ all lie on a line contained in $P$.
\end{itemize}
\end{lemma}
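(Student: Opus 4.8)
The strategy is to mimic the argument already developed in the proof of Theorem~\ref{thm:convex_adjacent}, but to extract only the collinearity conclusion and to do so simultaneously for all $n$ prefixes, without invoking convexity. The three bullet points are logically sequential: the convergence of the angles feeds into the convergence (or degeneration) of the basepoints, which in turn is what lets us pass to a limiting line and locate the common point $q$ on $E_1$.

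\textbf{Step 1: Convergence of the angles.}
Fix $j$. By hypothesis, for each $N$ the pair $(a_N^j, \theta_N^j)$ realizes the word $A_j E_1 \ldots E_N$. In particular, for any fixed $M$, all pairs with $N \geq M$ realize the common initial word $A_j E_1 \ldots E_M$. This is exactly the setup of Corollary~\ref{cor:realizability}: sequences of trajectories realizing longer and longer common words must have directions converging to a common limit. Applying it (with the realizing trajectories $\tau_+(a_N^j, \theta_N^j)$ playing both roles) shows that $\theta_{(N)}^j$ is Cauchy and hence converges to some $\theta^j \in [0,2\pi)$. Moreover, since the trajectories from different $A_j$'s all eventually realize the same tail $E_1 \ldots E_N$, the same corollary forces all the limits $\theta^j$ to coincide; I would record this single common value as $\theta$. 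This step is essentially a direct citation of Corollary~\ref{cor:realizability} and should be routine.

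\textbf{Step 2: Convergence or degeneration of the basepoints.}
With the angles pinned to a common limit $\theta$, I develop along the sequence $(E_i)$ and work inside $\D_P((E_i)_{i \in \Z})$. By Lemma~\ref{lem:ideal_uniqueness}, since $(E_i)$ is aperiodic there is a \emph{unique} ideal trajectory $\tau$ realizing it, and the trajectories $\tau_+(a_N^j,\theta_N^j)$ converge to $\tau$ (their directions converge to $\theta$, the direction of $\tau$, and they realize arbitrarily long prefixes of $\tau$'s word). Now consider the basepoints $a_N^j \in A_j$. The edge $A_j$ is a fixed segment in the initial copy of $P$; the trajectory $\tau$ meets the line containing $A_j$ in a single point unless $\tau$ is parallel to $A_j$, i.e. unless $A_j$ makes angle $\theta$ with the horizontal. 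In the non-parallel case, the points $\tau_+(a_N^j,\theta_N^j)\cap A_j = a_N^j$ must converge to that single intersection point $a_j \in A_j$, because the trajectories converge to $\tau$ uniformly on compact pieces of the development. In the parallel (degenerate) case the basepoints need not converge, and we are free to select $a_j \in A_j$ arbitrarily; this is precisely the escape clause built into the statement.

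\textbf{Step 3: The common line and the point $q$.}
Finally, I claim all the chosen limit points $a_1, \ldots, a_n$ lie on a single line in $P$, namely (the initial segment of) the ideal trajectory $\tau$. For each $j$, the point $a_j$ lies on $\tau$: in the non-degenerate case this is immediate since $a_j$ is the intersection of $\tau$ with $A_j$, and in the degenerate case $A_j$ coincides with $\tau$ as lines, so any chosen $a_j \in A_j$ lies on $\tau$ as well. Since $\tau$ realizes the word beginning $E_1 \ldots$, it crosses the edge $E_1$ at an interior point; taking $q$ to be this crossing point gives a point of $E_1$ on $\tau$. The segment of $\tau$ from the $A_j$-copies through to $E_1$ lies inside the development, and pulling it back to the original copy of $P$ (the portion before the first reflection) exhibits $\{q, a_1, \ldots, a_n\}$ as collinear points on a line segment contained in $P$.

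\textbf{Main obstacle.}
The delicate point is Step~2, specifically justifying that the basepoints genuinely converge and identifying exactly when they can fail to. The subtlety is that the $a_N^j$ live on the fixed edge $A_j$ while the \emph{angles} $\theta_N^j$ are what we control via Corollary~\ref{cor:realizability}; one must transfer angular convergence into positional convergence. The clean way to do this is to work in the development and use the uniqueness of the ideal trajectory (Lemma~\ref{lem:ideal_uniqueness}) to pin down the limiting line, then read off where it meets each $A_j$. The genuinely degenerate case --- $A_j$ parallel to $\theta$, so the limiting line runs along $A_j$ and basepoints can slide freely --- is exactly the scenario foreshadowed by the non-realizable hexagon example of Figure~\ref{fig:not_realizable}, where $A$ and $B$ are collinear with the limiting trajectory. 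Handling this case correctly, rather than assuming the basepoints always converge, is what makes the third bullet of the statement necessary and is the crux of the lemma.
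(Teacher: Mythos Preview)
Your proposal is correct and follows essentially the same route as the paper: invoke the unique ideal trajectory $\tau$ from Lemma~\ref{lem:ideal_uniqueness}, read off the common limiting angle $\theta$ and the limiting basepoints (with the parallel-edge escape clause), and take $q$ to be the point where $\tau$ meets $E_1$. The paper's proof is considerably terser---three sentences that simply cite Lemma~\ref{lem:ideal_uniqueness} and declare the conclusions---while you have unpacked the angle convergence via Corollary~\ref{cor:realizability} and spelled out the degenerate case more carefully, but the underlying argument is the same.
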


\begin{proof}
All of the sequences of trajectories that approximate $(E_i)$ converge to the same (unique) ideal trajectory $\tau$ guaranteed by Lemma \ref{lem:ideal_uniqueness}. Hence their angles all converge to $\theta$ and their basepoints converge to a set of collinear points $\{a_1, \ldots, a_n\}$, with the case of $A_j$ having angle $\theta$ handled as in the statement. Finally, $q$ may be taken as any point in the intersection of $\tau$ and $E_1$, which is clearly collinear (along $\tau$) with $\{a_1, \ldots, a_n\}$. \end{proof}

In the proof of Theorem \ref{thm:convex_adjacent}, after the collinearity of $a$ and $b$ is established, the hypothesis of convexity is applied to deduce the adjacency of $A$ and $B$. In the absence of a convexity hypothesis, collinearity is insufficient to detect adjacency. This means we will need to develop more sophisticated tools for decoding adjacency fin non-convex polygons.

To illustrate why collinearity alone cannot detect adjacency in the non-convex setting, consider the following example. See Figure \ref{fig:reflex_coemitters}. Suppose that edges $B$ and $C$ meet a point $p$ in a reflex angle. Orient the edges of $P$ clockwise, and let the angles of $C$ (negative) and $B$ (positive) with the horizontal be denoted by $\gamma$ and $\beta$.
Choose some nonsingular $\theta \in (\gamma, \beta)$ and let $(E_i) = \B_+(p,\theta)$. Then if $A$ is the first edge hit by $\tau_+ (p, \pi + \theta)$, we see that $A$ and $B$ are both prefixes for a trajectory with associated bounce sequence $(E_i)_{i=1}^\infty$.
However, $A$ and $B$ are not adjacent.

Moreover, in the non-convex setting
we can no longer use common prefixes to determine the adjacency of edges even when they meet in a non-reflex angle.
Suppose that $B$ and $C$ are as above, and $A$ and $D$ now meet in a non-reflex angle at point $q$.
Let $\theta$ be such that the straight line path from $q$ in the direction of $\theta$ hits $p$.
See Figure \ref{fig:reflex_abutter}.
Set $(E_i) = B_+(p, \theta)$.
Then $\{A,B,D\}$ is a set of common prefixes for $(E_i)$:
by taking points $b \in B$ and $d \in D$ close to $p$ and $q$, respectively, we can approximate $(E_i)$ by $\B_+(b,\theta)$ and $\B_+(d,\theta)$. By taking a point $a$ on $A$ very close to $q$ and $\theta'$ close to $\theta$, we can approximate $(E_i)$ by $\B_+(a,\theta')$.

\begin{figure}[ht]
  \centering
  \begin{subfigure}{.49 \textwidth}
\centering
\includegraphics[width=2.75in]{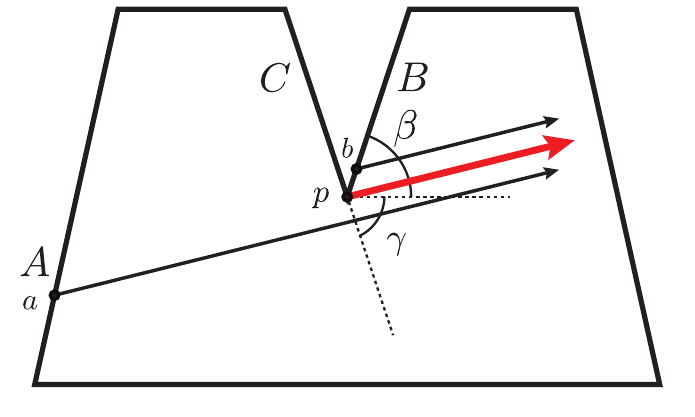}
\caption{Common prefixes $A,B$ that are not adjacent.}
\label{fig:reflex_coemitters}
  \end{subfigure}
  \begin{subfigure}{.5\textwidth}
\centering
\includegraphics[width=2.75in]{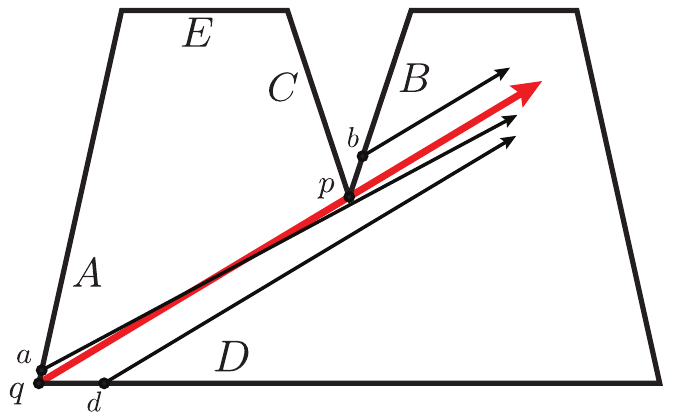}
\caption{A triple $A, B, D$ of common prefixes.}
\label{fig:reflex_abutter}
  \end{subfigure}
  \caption{The failure of collinearity to detect adjacency in non-convex polygons.}
  \label{fig:reflex_problems}
\end{figure}

Of course, even more complicated combinatorial arrangements can be imagined.\\

{\noindent \bf Grazing.} The key observation that allows us to deal with these difficulties is that in both scenarios, the ideal trajectory
\[\tau :=\tau_+(p,\pi + \theta) \cup \{p\} \cup \tau_+(p,\theta)\]
``grazes'' the vertex $p$. Approximating $\tau$ by nonsingular trajectories will then yield the bounce sequence $(E_i) \in \overline{\B(P)}$ associated to $\tau$.

Since $\tau$ passes through $p$, there are points in the interiors of $B$ and $C$ that lie arbitrarily close to $\tau$. Thus by taking points $b$ on $B$ close to $p$ we may approximate $(B, E_1, E_{2}, \ldots)$ by $\B_+(b, \theta)$, and likewise for $C$, $(C, E_0, E_{-1}, \ldots)$, and $\B_+(c,\pi + \theta)$.

We record this phenomenon in the following definition:

\begin{definition}
  We say that a pair $\{F,G\}$ \emph{grazes} an ideal trajectory if there exists an $(E_i)_{i \in \Z} \in \overline{\B(P)}$
  such that $\{E_0, E_1 \} \cap \{F,G \} = \emptyset$ and
  \begin{itemize}
\item $(F, E_0, E_{-1},  \ldots) \in \overline{B_+(P)}$ and
\item $(G, E_1, E_{2},  \ldots) \in \overline{B_+(P)}$.
  \end{itemize}
\end{definition}

Observe that this definition implies that $\{F, E_1\}$ are common prefixes for a trajectory associated to $(E_{-i})_{i=0}^\infty$, and similarly $\{G, E_0\}$ are common prefixes for a trajectory associated with $(E_{i})_{i=1}^\infty$.

In each of the scenarios in Figure \ref{fig:reflex_problems}, the pair $\{B,C\}$ grazes an ideal trajectory. Moreover, $\{A,B\}$ are common prefixes for the forward bounce sequences but are not adjacent. To rule out these cases, we want to detect and remove from consideration all forward bounce sequences that come from grazed trajectories.
To that end, if $\{F,G\}$ grazes an ideal trajectory with associated bounce sequence $(E_i)_{i \in \Z}$, then we say that the sequence $(E_i)_{i=1}^\infty$ is a
{\em grazing sequence}
for the pair of common prefixes $\{G, E_0\}$.

While not every grazing sequence comes from an arrangement exactly as in Figure \ref{fig:reflex_problems}, we can still deduce
that the limiting trajectory is singular. That is, there is geometric content to our combinatorial definition of a grazing pair:

\begin{lemma}\label{lem:what_does_abut_look_like}
  If $\{F,G\}$ grazes an ideal trajectory with associated bounce sequence
  $(E_i) \in \overline{\B(P)}$ then there exist points $f \in F$, $g \in G$, $e_0 \in E_0$, $e_1 \in E_1$ such that $\{f,g, e_0, e_1\}$ all lie on a line contained in $P$.
\end{lemma}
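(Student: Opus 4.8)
The plan is to extract the asserted collinearity by applying the collinearity result of Lemma~\ref{lem:collinear_limiting_points} twice --- once to the forward tail and once to the backward tail of the grazed sequence $(E_i)_{i \in \Z}$ --- and then to glue the two resulting segments together using the uniqueness of the associated ideal trajectory. Throughout I will assume $(E_i)_{i\in\Z}$ is aperiodic, so that Lemmas~\ref{lem:ideal_uniqueness} and~\ref{lem:collinear_limiting_points} are available; this is the case we need downstream, and the periodic case can be disposed of by the flat-strip argument already used in the proof of Lemma~\ref{lem:ideal_uniqueness}.

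First I would invoke the observation recorded immediately after the definition of grazing: the hypothesis that $\{F,G\}$ grazes the ideal trajectory for $(E_i)_{i\in\Z}$ already tells us that $\{E_0, G\}$ is a set of common prefixes for the forward sequence $(E_i)_{i=1}^\infty = (E_1, E_2, \ldots)$, while $\{E_1, F\}$ is a set of common prefixes for the backward sequence $(E_{-i})_{i=0}^\infty = (E_0, E_{-1}, \ldots)$. I then apply Lemma~\ref{lem:collinear_limiting_points} to each of these. Applied to $\{E_0,G\}$ for $(E_1,E_2,\ldots)$ it produces limit points $e_0 \in E_0$ and $g \in G$, together with a point $e_1 \in E_1$, all lying on a single straight segment $L_+$ contained in $P$ (the initial segment of the forward ideal trajectory of $(E_1, E_2, \ldots)$). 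Applied to $\{E_1,F\}$ for $(E_0, E_{-1},\ldots)$ it produces limit points $e_1' \in E_1$ and $f \in F$, together with a point $e_0' \in E_0$, all lying on a segment $L_-$ contained in $P$.

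The crux is to identify $L_+$ and $L_-$ with a common line, and in particular to show $e_0 = e_0'$ and $e_1 = e_1'$. Here I use uniqueness of the ideal trajectory: by Lemma~\ref{lem:ideal_uniqueness} the aperiodic sequence $(E_i)_{i\in\Z}$ has a unique ideal trajectory $\tau$ in its development. The forward half of $\tau$ realizes $(E_1, E_2, \ldots)$ and its reverse realizes $(E_0, E_{-1}, \ldots)$, so by the (one-sided) uniqueness of ideal trajectories these halves must coincide with the ideal trajectories produced by the two applications above. Consequently both $L_+$ and $L_-$ are the segment of the single line $\tau$ between its crossings with $E_0$ and $E_1$; hence $e_0 = e_0'$ and $e_1 = e_1'$ are literally the same crossing points, and $L_+ = L_-$ is the line through them. (Should these two crossing points happen to coincide, I instead pin down the common line using that the two limiting directions are antiparallel.) Since $L_+$ and $L_-$ lie in $P$ and overlap along the segment joining $e_0$ and $e_1$, their union is a single straight segment of $P$ containing all four points $f, g, e_0, e_1$, as required.

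The main obstacle is exactly this identification step: a priori the two applications of Lemma~\ref{lem:collinear_limiting_points} only yield two segments in $P$, and one must rule out that they are distinct parallel chords meeting $E_0$ and $E_1$ at different points. Uniqueness of the ideal trajectory (and hence the standing aperiodicity assumption) is what forces both segments to be sub-segments of the single bi-infinite line $\tau$ --- the geometric incarnation of the grazed vertex --- thereby converting the purely combinatorial grazing condition into the claimed collinearity.
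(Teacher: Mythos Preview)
Your argument is essentially the same as the paper's: both apply Lemma~\ref{lem:collinear_limiting_points} twice (once to each tail of $(E_i)_{i\in\Z}$) and then glue the two resulting collinear sets into a single line. The only difference is in the gluing step: the paper invokes Corollary~\ref{cor:realizability} to conclude that the two limiting directions are opposite (hence the two segments are collinear), whereas you invoke Lemma~\ref{lem:ideal_uniqueness} to identify both segments with the unique ideal trajectory for $(E_i)_{i\in\Z}$. These are closely related tools (indeed Lemma~\ref{lem:ideal_uniqueness} is proved via Corollary~\ref{cor:realizability}), and your version arguably makes the identification step more explicit than the paper's somewhat compressed ``Thus $\{f,g,e_0,e_1\}$ are all collinear.''
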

\begin{proof}
  Apply Lemma \ref{lem:collinear_limiting_points} to the pairs of common prefixes $\{F,E_0\}$ and $\{G, E_1\}$
  so as to get points $f \in F, g \in G$ and $e_i, e_i' \in E_i$ for $i=0, 1$
  such that $\{f, e_0, e_1\}$ are collinear and $\{g, e_0', e_1'\}$ are collinear.
  Now since $(E_i)_{i \in \Z}$ is approximated by nonsingular bi-infinite trajectories, we see by Corollary \ref{cor:realizability} that the limiting directions $\theta$ and $\theta'$ for the two trajectories must be opposite, i.e. $\theta=\pi + \theta'$.

Thus $\{f,g,e_0, e_1\}$ are all collinear.
\end{proof}

In the convex setting, this collinearity condition implies that the only grazing sequences are ones having associated ideal trajectories that are doubly singular, since a line can intersect $\partial P$ at most twice when $P$ is convex. In particular, a trajectory is grazing if and only if it is doubly singular, so the sequence of edges encountered from a vertex is realizable if and only if it is not grazing. That is to say, non-grazing is already contained in the hypothesis of realizability in Lemma \ref{thm:convex_adjacent}.

Our observations about grazing trajectories and sequences give us a way to generalize our criterion for adjacency to the non-convex setting.

\begin{theorem}\label{thm:nonconvex_adjacent}
  Let $P$ be a polygon. Then
  edges $A$ and $B$ are adjacent
  if and only if
  $\{A,B\}$ is a pair of common prefixes for a non-grazing bounce sequence.
\end{theorem}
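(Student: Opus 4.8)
The plan is to prove Theorem \ref{thm:nonconvex_adjacent} by establishing both directions while carefully isolating exactly when collinearity fails to detect genuine adjacency, using the grazing machinery to excise the problematic cases. Recall that in the convex proof, adjacency forced the limiting basepoints $a$ and $b$ to coincide; in the non-convex setting collinearity only tells us $a, b$ lie on a common line inside $P$, and Figure \ref{fig:reflex_problems} shows that collinear-but-distinct limit points can arise for non-adjacent edges. The non-grazing hypothesis is precisely designed to rule these out, so the structure of the argument is to show that the only way collinearity fails to give adjacency is through a grazed vertex.

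For the forward direction, suppose $A$ and $B$ are adjacent at a vertex $p$. I would reproduce the construction from the proof of Theorem \ref{thm:convex_adjacent}: choose a direction $\theta$ making $\tau = \tau_+(p,\theta)$ nonsingular and aperiodic, set $(E_i) = \B_+(p,\theta)$, and use Lemma \ref{lem:neighborhood_in_corr} to find, for each $N$, an $\varepsilon$-tubular neighborhood of $\tau$ inside the corridor for $E_1 \ldots E_N$. Since $A$ and $B$ share $p$, one finds $a \in A$ and $b \in B$ within $\varepsilon$ of $p$, so $\tau_+(a,\theta)$ and $\tau_+(b,\theta)$ realize $A E_1 \ldots E_N$ and $B E_1 \ldots E_N$; hence $\{A,B\}$ are common prefixes for $(E_i)$. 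The new obligation is to verify that $(E_i)$ is \emph{non-grazing}, i.e.\ that no ideal trajectory with a backward extension through some edge $F$ (disjoint from $\{E_0, E_1\}$) grazes $p$ in the way the definition forbids. The cleanest route is to choose $\theta$ generically so that the backward trajectory $\tau_+(p, \pi + \theta)$ also immediately exits into the interior of an edge, guaranteeing that the bi-infinite ideal trajectory through $p$ does not graze; since the singular directions and the finitely many grazing directions from $p$ are countable, such a $\theta$ exists.

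For the converse, suppose $\{A,B\}$ is a pair of common prefixes for a non-grazing aperiodic bounce sequence $(E_i) \in \B_+(P)$. By Lemma \ref{lem:collinear_limiting_points}, the approximating trajectories based on $A$ and $B$ converge to a single ideal trajectory $\tau$, and the limit basepoints $a \in A$, $b \in B$ together with a point $q \in E_1$ all lie on a line contained in $P$ along $\tau$. I want to conclude $a = b$, which gives adjacency. Suppose instead $a \neq b$; then the segment of $\tau$ between $a$ and $b$ lies in $P$ and meets $\partial P$ at both $a$ and $b$. The key point is to argue that $\tau$ must pass through a vertex of $P$ between (or beyond) these intersections in such a way as to realize a grazing configuration, contradicting the non-grazing hypothesis. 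Concretely, because the forward trajectory from $a$ along $\tau$ is nonsingular (it realizes $(E_i)$) and hits $\partial P$ transversally, while $b$ lies on $\tau$ at a different boundary point, the geometry near the ``extra'' boundary crossing forces the backward continuation of $\tau$ to graze a vertex; this produces an edge $F$ and a bi-infinite sequence witnessing that $(E_i)$ is grazing.

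The main obstacle will be this last step: converting the bare collinearity output of Lemma \ref{lem:collinear_limiting_points} into an honest grazing witness satisfying the combinatorial definition, including the disjointness requirement $\{E_0, E_1\} \cap \{F,G\} = \emptyset$. The subtlety is that $a \neq b$ does not by itself produce a vertex on $\tau$; rather, one must use that $B$ is a prefix (so trajectories based near $b$ on $B$ track $(E_i)$ forward) together with the fact that $\tau$ is a genuine \emph{ideal} trajectory limiting from nonsingular ones. I expect the argument to run by contradiction: if $a \neq b$ and no grazing occurs, then a full-width flat strip around the relevant segment of $\tau$ would survive into $\B_+(P)$, contradicting either the aperiodicity of $(E_i)$ (via Lemma \ref{lem:ideal_uniqueness} and Corollary \ref{coro:thinning_corridor}) or the non-grazing hypothesis itself. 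Pinning down precisely which vertex gets grazed, and checking the edge-disjointness condition in the definition of grazing, is the delicate bookkeeping that forms the technical heart of the proof.
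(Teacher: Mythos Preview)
The forward direction is essentially right, though your condition on $\theta$ is phrased imprecisely. The paper's choice is simply to take $\theta$ so that $\pi+\theta$ points \emph{outside} of $P$; then Lemma \ref{lem:what_does_abut_look_like} gives non-grazing at once, since any grazing witness would produce a point $e_0 \in E_0$ collinear with $p$ and $e_1$ along a line contained in $P$, impossible when the backward ray leaves $P$ immediately. Your phrase ``immediately exits into the interior of an edge'' is ambiguous and, read literally, would actually \emph{create} a candidate for $E_0$ rather than preclude one.

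The converse has a genuine gap. Your plan is to assume $a \neq b$ and derive a grazing configuration, but the mechanism you propose (a flat strip surviving, contradicting aperiodicity via Lemma \ref{lem:ideal_uniqueness}) is not how the argument goes and does not obviously work: the line $L$ through $a,b,e_1$ need not bound any vertex-free strip, and aperiodicity of $(E_i)$ plays no role here. The missing key observation is that, because $L \subset P$ and $e_1$ lies in the \emph{interior} of $E_1$, at least one of $a,b$ must already be a \emph{vertex} of $P$ (edges separate interior from exterior, so $L$ cannot cross $A$ transversally at an interior point and continue inside $P$). Say $a$ is the vertex, and let $C$ be the other edge of $P$ incident to $A$ at $a$. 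The paper then runs a case analysis on the angle $C$ makes with $L$: either $L$ fails to lie in $P$, or $\{A,C\}$ grazes an ideal trajectory containing $L$ (contradicting the non-grazing hypothesis), or $C$ lies strictly below $L$, in which case no other edge can meet $L$ and hence $B=C$. The grazing witness is thus built from the vertex $a$ itself and its adjacent edge $C$, not from some vertex ``between'' $a$ and $b$. Your outline never isolates $C$ or this case split, and without it there is no route from collinearity to adjacency.
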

\begin{proof}
  Suppose first that $A$ and $B$ are adjacent at a vertex $p$. Choose a nonsingular direction $\theta$ such that $\pi+\theta$ points outside of $P$. Let $(E_i)=\B_+(p,\theta)$, and let $e_1$ be the intersection point of $\tau_+(p,\theta)$ and $E_1$.

  We now show that $(E_i)$ is a
  non-grazing
  sequence. If $(E_i)$ were grazing, by Lemma \ref{lem:what_does_abut_look_like}, there would be some point $e_0$ in another edge $E_0$ (different from  $E_1, A$ and $B$) that is collinear with the line through $p$ and $e_1$. But $\theta$ was chosen to be nonsingular and $(p,\pi+ \theta)$ points outside of $P$, hence $(E_i)$ cannot be grazing.

  For the other direction, suppose that $\{A, B\}$ is a pair of common prefixes for a
  non-grazing
  sequence $(E_i)$.
  By Lemma \ref{lem:collinear_limiting_points},
  there exist points $a$, $b$ and $e_1$ on $A$, $B$ and $E_1$, respectively, which all lie on a line $L$ contained entirely in $P$. Moreover, since $e_1$ is on the interior of $E_1$, at least one of $a$ and $b$ is a vertex, for edges separate the interior of $P$ from its exterior and $L$ lies entirely inside of $P$.

  Without loss of generality, suppose that $a$ is a vertex of $A$. For convenience, we suppose that $L$ is horizontal, and that $A$ lies entirely inside the closed half-space above $L$. See Figure \ref{fig:adj_cases}. Let $\alpha \in [0, \pi)$ be the counterclockwise angle that $A$ makes with $L$. The interior of $P$ lies clockwise from $A$. Let $C$ denote the edge of $P$ that is incident to $A$ at $a$. Our goal is to show that $B=C$. We will do so by considering several cases, according to the counterclockwise angle $\gamma \in [0, 2\pi)$ that the edge $C$ makes with the horizontal. The cases for the placement of $C$ are the numbered segments in Figure \ref{fig:adj_cases}.

\begin{figure}[!ht]
\centering
\includegraphics[width=5.8in]{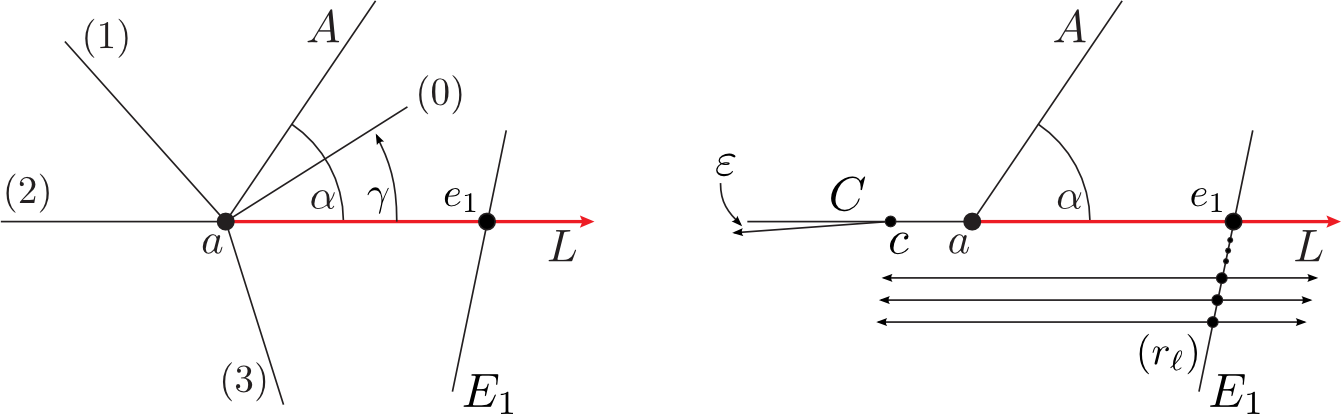}
\caption{The cases in the proof of Theorem \ref{thm:nonconvex_adjacent} (left), and the construction used in Case 2 (right).}
\label{fig:adj_cases}
\end{figure}

\begin{itemize}
\item {\em Case 0:} If $\gamma \in (0, \alpha)$ then $L$ does not lie entirely inside of $P$, a contradiction.

\item {\em Case 1:} If $\gamma \in (\alpha, \pi)$ then it is easy to see that $\{A,C\}$ grazes an ideal trajectory containing $L$, contradicting our assumption that the sequence was non-grazing.

\item {\em Case 2:} If $\gamma = \pi$ then $C$ is collinear with $L$. See the right of Figure \ref{fig:adj_cases}. Take a sequence of points $(r_\ell)$ on $E_1$ converging to $e_1$ from below such that $\tau(r_\ell, \pi)$ are all nonsingular. Then let $(F'_j)_{j\in \Z}$ be the limit of $\B(r_\ell, \pi)$, and set $(F_j)$ to be the reverse of $F'_j$, i.e. $F_j = F'_{-j}$. Observe that for all $i \ge 1$, we have $F_i = E_{i-1}$.

  Now for any $N$, there exists $c \in C$ and small enough $\varepsilon$ such that $\tau_+(c, \pi+\varepsilon)$ realizes the word $C F_{-1} \ldots F_{-N}$. In particular, this implies that
  \[(C, F_{-1}, \ldots) \in \overline{\B_+(P)}.\]
  But since $A$ is a prefix for
  $(E_i)_{i=1}^\infty$ we see that
  \[(A, E_1, E_2, \ldots) = (A, F_0, F_1, \ldots) \in \overline{\B_+(P)}\]
  hence the pair $\{A,C\}$ grazes the ideal trajectory associated to $(F_{j+1})_{j \in \Z}$.

  But we assumed that $(E_i)$ was non-grazing. Therefore $\gamma \neq \pi$.

\item {\em Case 3:} The possibility that remains is that $\gamma > \pi$. Now since $L \subset P$, this implies that no other edges of $P$ may intersect $L$. Since $b \in B \cap L$, we see that $B=C$ and conclude that $A$ and $B$ are adjacent.\end{itemize}
  This completes the proof. \end{proof}

  \subsection{Forward and full bounce spectra}\label{sec:full_determines_forward}

  We have used both one--way--infinite and bi-infinite trajectories in order to reconstruct adjacency, in the guise of common prefixes and grazing sequences, respectively. However, we originally framed our reconstruction problem only in terms of $\B(P)$. Therefore we need to show that $\B(P)$ determines $\B_+(P)$ so that we have access to information encoded one--way--infinite bounce sequences.

  It is clear that the forward tail of each sequence in $\B(P)$ lies in $\B_+(P)$. As explained in Remark \ref{rem:singular-word}, the converse is not true. For example, consider a point $(p,\theta) \in P \times [0, 2\pi)$ such that the forward trajectory is nonsingular but the backwards trajectory hits a vertex. Then $\B_+(p,\theta)$ is an element of the forward bounce spectrum but does not arise as the restriction of any element of $\B(P)$.

While this example shows that there is not a direct route to recovering $\B_+(P)$ from $\B(P)$, we may instead show that the latter determines the former by passing through their respective closures.

\begin{lemma}\label{lem:determining_forward_closure}
For any $P$, the forward bounce sequence $(E_i)_{i=1}^{\infty} \in \overline{\B_+(P)}$ if and only if there exists a full bounce sequence $(E_i)_{-\infty}^{\infty} \in\overline{\B(P)}$ whose tail is $(E_i)_{i=1}^{\infty}$.
\end{lemma}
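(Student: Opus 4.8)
The plan is to prove the two implications separately. The reverse implication is the easier one: it asserts that the forward tail of any bi-infinite sequence in $\overline{\B(P)}$ lies in $\overline{\B_+(P)}$. Suppose $(E_i)_{i \in \Z} \in \overline{\B(P)}$, so there are nonsingular bi-infinite trajectories $\tau(p_n,\theta_n)$ with $\B(p_n,\theta_n) \to (E_i)_{i \in \Z}$. Since each $\tau(p_n,\theta_n)$ is nonsingular, its forward portion (based at a point on the trajectory segment between the bounces indexed $0$ and $1$) is a nonsingular forward trajectory, and its forward bounce sequence is exactly the tail $(\B(p_n,\theta_n)(i))_{i \ge 1} \in \B_+(P)$. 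Convergence in $\A^\Z$ forces each positive coordinate to stabilize, so these tails converge in $\A^\N$ to $(E_i)_{i \ge 1}$, which therefore lies in $\overline{\B_+(P)}$.

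The forward implication is the substantive one. Given $(E_i)_{i=1}^\infty \in \overline{\B_+(P)}$, I would build, for each $N$, a single bi-infinite \emph{nonsingular} trajectory whose forward bounce sequence begins with $E_1 \ldots E_N$, and then extract a bi-infinite limit by compactness. For fixed $N$: since $(E_i)_{i \ge 1} \in \overline{\B_+(P)}$, the word $E_1 \ldots E_N$ is realized by some nonsingular forward trajectory, and by Lemma \ref{lem:neighborhood_in_corr} the corridor about $E_1 \ldots E_N$ contains an $\varepsilon$-tubular neighborhood of parallel trajectories, all of which realize $E_1 \ldots E_N$. This is an uncountable parallel family, and I claim (the key point, discussed below) that all but countably many of its members are nonsingular in \emph{both} time directions. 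Choosing one such member $\tau_N = \tau(p_N',\theta_N')$ gives a bi-infinite nonsingular trajectory, so $w_N := \B(p_N',\theta_N') \in \B(P)$, and after fixing the $\Z$-indexing so that its first forward bounce sits at index $1$ we have $w_N(i) = E_i$ for $1 \le i \le N$. Having produced $(w_N) \subset \B(P)$, I invoke compactness of $\overline{\B(P)}$ (Tychonoff) to pass to a subsequence converging to some $(G_i)_{i \in \Z} \in \overline{\B(P)}$. For each fixed $i \ge 1$ one has $w_N(i) = E_i$ once $N \ge i$, forcing $G_i = E_i$; the backward coordinates are whatever the subsequence selects, which is immaterial. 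Thus $(G_i)_{i \in \Z} \in \overline{\B(P)}$ has tail $(E_i)_{i=1}^\infty$, as required.

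The main obstacle is the claim that, within the tubular neighborhood of parallel trajectories realizing $E_1 \ldots E_N$, all but countably many members are bi-infinitely nonsingular. I would prove this by the mechanism used implicitly throughout the paper: as the basepoint ranges over a transversal to the fixed direction $\theta_N$, the location of the $k$-th bounce moves affinely along an edge on each interval where the combinatorial type is constant, so any fixed vertex of $P$ is met at the $k$-th bounce for at most one value of the transversal parameter. Since $P$ has finitely many vertices, there are only countably many bounce indices $k \in \Z$, and the combinatorial type changes at only countably many parameters, the set of parameters yielding a singular (forward or backward) trajectory is countable. Hence uncountably many parallel trajectories in the neighborhood are fully nonsingular, and $\tau_N$ may be selected as needed. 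This counting step is the only place demanding care; the remainder is simply assembling the realizability lemmas with a compactness extraction.
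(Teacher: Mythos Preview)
Your proposal is correct and follows essentially the same approach as the paper: both directions use the same mechanism (forward tails of approximating bi-infinite trajectories for the reverse implication, and corridors plus Lemma~\ref{lem:neighborhood_in_corr} plus compactness of $\overline{\B(P)}$ for the forward one). You are simply more explicit than the paper about the countability argument guaranteeing a bi-infinitely nonsingular trajectory inside the tubular neighborhood, which the paper asserts without justification.
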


\begin{proof}
One direction is clear: if $(E_i)_{-\infty}^{\infty} \in\overline{\B(P)}$, then for each $N$ take a nonsingular trajectory $\tau(p, \theta)$ which realizes $(E_i)_{i=-N}^{N}$. Then $\tau_+(p, \theta)$ clearly realizes $(E_i)_{i=1}^{N}$. Hence as this holds for every $N$, we have that $(E_i)_{i=1}^{\infty} \in \overline{\B_+(P)}$.

Conversely, suppose that $(E_i)_{i=1}^{\infty}  \in \overline{\B_+(P)}$. For any $N$, construct the corridor for $E_{1} \dots E_{N}$ and take some nonsingular forward trajectory $\tau_+$ lying in this corridor. By Lemma \ref{lem:neighborhood_in_corr}, this corridor contains a tubular neighborhood about $\tau_+$. In particular, it contains some nonsingular bi-infinite trajectory $\tau_N$ that lies in the neighborhood, hence the corridor, and realizes $E_{1} \dots E_{N}$. Choose such a $\tau_N$ for each $N$. Then by compactness of $\B(P)$ the sequence $\B(\tau_1), \B(\tau_2), \ldots$ converges to some $(F_i)_{-\infty}^{\infty}$, and by construction we must have that $F_i = E_i$ for every $i \ge 1$.
\end{proof}

Since $\B(P)$ clearly determines $\overline{\B(P)}$, we only have left to show that for a given $(E_i) \in \overline{\B_+(P)}$, we can distinguish when it is actually realized. We can do so by considering the maximal set of common prefixes for $(E_i)$.

\begin{theorem}
\label{thm:detecting_forward_realizability}
A sequence $(E_i)_{i=1}^\infty \in \overline{\B_+(P)}$ is realizable if and only if for every $n \ge 1$ the only prefix for $(E_i)_{i=n+1}^\infty$ is $E_{n}$.
\end{theorem}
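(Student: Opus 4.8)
The plan is to prove both directions by relating realizability of $(E_i)_{i=1}^\infty$ to the geometric meaning of prefixes via Lemma~\ref{lem:collinear_limiting_points}. The key intuition is that if a forward sequence is genuinely realized by a trajectory $\tau_+(p,\theta)$ with $p$ in the interior of an edge, then the \emph{only} edge that can serve as a prefix for each tail $(E_i)_{i=n+1}^\infty$ is the edge $E_n$ that $\tau$ actually bounces off of just before; but if the sequence is not realizable, the limiting ideal trajectory must graze a vertex, and that vertex produces a \emph{second} prefix edge, so some tail admits a prefix other than $E_n$.

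First I would prove the forward direction (realizable $\Rightarrow$ unique prefix). Suppose $(E_i)_{i=1}^\infty$ is realized by $\tau_+(p,\theta)$. For each $n$, the trajectory passes through the interior of edge $E_n$ at some point $e_n$, and then realizes the tail $(E_i)_{i=n+1}^\infty$. Clearly $E_n$ is a prefix for this tail. To show it is the \emph{only} prefix, suppose some edge $F$ is also a prefix for $(E_i)_{i=n+1}^\infty$. By Lemma~\ref{lem:collinear_limiting_points} applied to the set $\{E_n, F\}$ of common prefixes for the aperiodic tail, there are collinear limit points $e \in E_n$, $f \in F$, and $q \in E_{n+1}$ lying on a line $L \subset P$ at the common limiting angle $\theta$. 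But the realizing trajectory $\tau$ already determines this unique limiting line (by Corollary~\ref{coro:aperiodic_traj_are_unique} and Lemma~\ref{lem:ideal_uniqueness}, since the sequence is aperiodic), and $\tau$ meets $\partial P$ at $e_n$ in the interior of $E_n$ before reaching $E_{n+1}$. Since $L = \tau$ hits the interior of $E_n$ transversally, the only way $f \in F$ can also lie on $L \cap \partial P$ between the $E_n$-crossing and the $E_{n+1}$-crossing is for $F = E_n$. (One should handle the boundary case where $\theta$ coincides with the direction of $F$ itself, using the third bullet of Lemma~\ref{lem:collinear_limiting_points}, which still forces $f$ onto $L$.)

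For the converse I would argue the contrapositive: if $(E_i)_{i=1}^\infty$ is not realizable, I exhibit an $n$ and a prefix $F \neq E_n$ for $(E_i)_{i=n+1}^\infty$. Since $(E_i) \in \overline{\B_+(P)}$ is not realizable, by Lemma~\ref{lem:determining_forward_closure} it is the forward tail of some bi-infinite $(E_i)_{i \in \Z} \in \overline{\B(P)}$ whose associated ideal trajectory $\tau$ is not an honest nonsingular trajectory. By Lemma~\ref{lem:ideal_uniqueness} this ideal trajectory is unique (in the aperiodic case), and non-realizability forces $\tau$ to pass through a vertex $p$ — otherwise a flat strip of genuine nonsingular trajectories would realize the sequence, contradicting non-realizability exactly as in the proof of Lemma~\ref{lem:ideal_uniqueness}. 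Let $E_{n+1}$ be the edge whose crossing occurs immediately after $\tau$ grazes $p$, so that $p$ is the shared vertex of $E_n$ and a \emph{distinct} edge $F$. Both edges $E_n$ and $F$ meet $p$, and $\tau$ passes through $p$, so by pushing basepoints along $F$ toward $p$ and shooting in directions approaching $\theta$ we can approximate $(E_i)_{i=n+1}^\infty$, giving $(F, E_{n+1}, E_{n+2}, \ldots) \in \overline{\B_+(P)}$; thus $F$ is a second prefix. This is precisely the grazing phenomenon recorded before Theorem~\ref{thm:nonconvex_adjacent}.

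The main obstacle I anticipate is the converse direction, specifically pinning down that a \emph{non-realizable} forward sequence forces its unique ideal trajectory through an actual vertex, and that the grazing vertex lies between two \emph{distinct} edges $E_n$ and $F$ rather than being an artifact at the very start of the development. The flat-strip argument from Lemma~\ref{lem:ideal_uniqueness} handles the existence of a grazed vertex cleanly, but I would need to be careful that the vertex is grazed at a finite index $n$ (so that both one-sided tails are genuinely approximable) and to verify the edge-incidence bookkeeping: the two edges meeting at the grazing vertex are exactly the predecessor edge $E_n$ and some other edge $F$, and $F$ cannot equal $E_n$ since a trajectory cannot bounce off the same edge consecutively (Observation~\ref{obs:full_bounce_not_dense}). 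The periodic case must also be addressed separately, but there realizability is automatic since periodic sequences come from honest periodic trajectories, so the ``only if'' direction is vacuous and the ``if'' direction follows from the uniqueness machinery as above.
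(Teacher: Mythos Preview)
Your forward direction is essentially the paper's argument and is correct: if $\tau_+(p,\theta)$ realizes $(E_i)$, then the segment of the (unique) ideal trajectory between the $E_n$--crossing and the $E_{n+1}$--crossing lies in the interior of $P$ except at the interior point $e_n \in E_n$, so any other prefix $F$ would force a second boundary point on that segment, contradicting nonsingularity.

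The converse, however, has a genuine gap. You assert that if the ideal trajectory $\tau$ grazes a vertex $p$ and $E_{n+1}$ is the next edge crossed, then ``$p$ is the shared vertex of $E_n$ and a distinct edge $F$.'' This is not true in general. After bouncing off $E_n$ at an interior point $e_n$, the billiard can head toward a vertex $p$ that lies between two edges $C$ and $D$, \emph{neither} of which is $E_n$. (Think of a reflex vertex grazed by the ideal trajectory, as in Figure~\ref{fig:reflex_coemitters}: the grazed vertex sits between $B$ and $C$, while the edge hit just before can be $A$.) Your appeal to Observation~\ref{obs:full_bounce_not_dense} to get $F \neq E_n$ presupposes exactly the incidence you have not established.

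The paper avoids this by a different mechanism: rather than trying to identify one specific second prefix via edge-incidence at $p$, it approximates the tail $(E_i)_{i \ge m+1}$ (where $m$ is the index after which the first vertex is hit) by nonsingular bi-infinite trajectories on \emph{both sides} of $\tau_+(p,\theta)$, and records the first edge each hits going \emph{backwards}. Because $p$ is a vertex, these two backward-first-edges $A$ and $B$ are necessarily distinct; both are prefixes for the tail, and since $A \neq B$ at least one of them differs from $E_m$. This two-sided approximation is the missing idea in your argument, and it is precisely what your flagged ``edge-incidence bookkeeping'' would need to be replaced by.
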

\begin{proof}
Suppose first that $(E_i)$ is realizable, that is $(E_i) \in \B_+(P)$. Take $(p,\theta)$ such that \mbox{$\B_+(p,\theta) = (E_i)$}. Then the trajectory meets the edge $E_n$ in its interior and transversely at a point $q$. By Lemma \ref{lem:collinear_limiting_points} we see that no other edge $A$ may be a prefix for $(E_i)_{i=n+1}^\infty$. If it were, then the limit point $a\in A$ would be collinear with $p$ and $q$ along a line contained in $P$. But this would force the trajectory $\tau_+(p,\theta)$ to be singular, contrary to our assumption.

Now suppose instead that $(E_i)_{i=1}^\infty$ is not realizable. Choose some sequence $(p_n, \theta_n)$ that realizes $E_1 \ldots E_n$. By compactness of $P$ we may take some subsequence limiting to a point $(p,\theta)$, which must necessarily define a singular trajectory. There are two cases:

\begin{itemize}
\item Suppose the point $p$ is not a vertex. Then since $(E_i)$ is not realizable, $\theta$ must be a singular direction for $p$. Looking at the tail of the trajectory starting from the singularity, we can reduce to the case above where $p$ is a vertex.
\item Now suppose $p$ is a vertex. Approximate $(E_i)_{i=1}^\infty$ from each side by nonsingular bi-infinite trajectories $\tau(a, \theta)$ and $\tau(b,\theta)$ lying on each side of $\tau_+(p, \theta)$. Let $A$ denote the first edge hit by $\tau_+(a, -\theta)$ and similarly for $B$ and $\tau_+(b, -\theta)$.

Since $p$ is a vertex, $A \neq B$, and by construction, $A$ and $B$ are common prefixes for $(E_i)$.
\end{itemize}
This completes the proof.\end{proof}

We therefore obtain the following relation between the infinite and bi-infinite symbolic codings.

\begin{corollary}\label{coro:full_determines_forward}
$\B(P) = \B(P')$ if and only if $\B_+(P) = \B_+(P')$.
\end{corollary}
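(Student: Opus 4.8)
The plan is to prove the biconditional $\B(P) = \B(P') \iff \B_+(P) = \B_+(P')$ by showing that each quantity determines the other, with the forward-to-full direction being routine and the full-to-forward direction being the substantive one that relies on the machinery built up in this section. The key point is that neither $\B(P)$ nor $\B_+(P)$ is simply a restriction of the other (as emphasized in Remark \ref{rem:singular-word}), so the correct route passes through the closures $\overline{\B(P)}$ and $\overline{\B_+(P)}$ together with the realizability criterion of Theorem \ref{thm:detecting_forward_realizability}.

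First I would dispatch the easy direction. If $\B_+(P) = \B_+(P')$, then every forward tail of a sequence in $\B(P)$ lies in $\B_+(P) = \B_+(P')$; but a bi-infinite bounce sequence is determined by its two forward rays (the forward tail and the reverse of the backward tail), and since $\B_+(P) = \B_-(P)$ by the Observation following the definition of $\B_+$, one sees that $\B(P)$ and $\B(P')$ have the same bi-infinite sequences. (Conversely, if $\B(P) = \B(P')$, then certainly the forward tails agree, but this does not immediately give all of $\B_+$ because of the singular-backward phenomenon; this is exactly why the hard direction cannot be done by naive restriction.)

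The substantive direction is to show that $\B(P)$ determines $\B_+(P)$. I would argue as follows. Since $\B(P)$ manifestly determines its closure $\overline{\B(P)}$, Lemma \ref{lem:determining_forward_closure} shows that $\B(P)$ determines $\overline{\B_+(P)}$: a forward sequence lies in $\overline{\B_+(P)}$ precisely when it is the tail of some element of $\overline{\B(P)}$. It then remains only to distinguish, within $\overline{\B_+(P)}$, which sequences are genuinely realizable, i.e.\ which lie in $\B_+(P)$ itself. This is accomplished by Theorem \ref{thm:detecting_forward_realizability}: a sequence $(E_i)_{i=1}^\infty \in \overline{\B_+(P)}$ is realizable if and only if for every $n \ge 1$ the only prefix for $(E_i)_{i=n+1}^\infty$ is $E_n$. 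Crucially, the condition of being a common prefix is phrased entirely in terms of membership in $\overline{\B_+(P)}$, which we have just shown is determined by $\B(P)$. Hence $\B(P)$ determines $\overline{\B_+(P)}$, which determines (via the prefix criterion) exactly the subset $\B_+(P) \subseteq \overline{\B_+(P)}$. Applying this to both $P$ and $P'$, the equality $\B(P) = \B(P')$ forces $\overline{\B_+(P)} = \overline{\B_+(P')}$ and hence the realizable subsets agree, so $\B_+(P) = \B_+(P')$.

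The main obstacle, and the reason the proof must be routed through closures, is that realizability is not a closed condition: an element of $\overline{\B_+(P)}$ may arise as a limit of genuine forward bounce sequences without itself being realized by any nonsingular trajectory (the grazing/singular phenomena of the previous subsections). The delicate step is therefore the appeal to Theorem \ref{thm:detecting_forward_realizability}, which converts the analytic notion of realizability into a purely combinatorial condition on prefixes that is visible inside $\overline{\B_+(P)}$ alone; once that translation is in hand, the corollary is a formal consequence. I expect no further calculation is needed beyond carefully chaining the implications $\B(P) \leadsto \overline{\B(P)} \leadsto \overline{\B_+(P)} \leadsto \B_+(P)$ in both directions.
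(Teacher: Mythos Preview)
Your proposal is correct and follows essentially the same approach as the paper: for the substantive direction you chain $\B(P) \leadsto \overline{\B(P)} \leadsto \overline{\B_+(P)} \leadsto \B_+(P)$ via Lemma~\ref{lem:determining_forward_closure} and Theorem~\ref{thm:detecting_forward_realizability}, exactly as the paper does, and for the easy direction you observe (as the paper does) that a bi-infinite bounce sequence is in $\B(P)$ precisely when both its one-sided tails are in $\B_+(P)$. Your additional commentary on why the argument must pass through closures is accurate and matches the spirit of the surrounding discussion.
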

\begin{proof}
For the forward direction, we note that $\B(P)$ determines $\overline{\B(P)}$,
which determines $\overline{\B_+(P)}$
(Lemma \ref{lem:determining_forward_closure}),
which in turns determines $\B_+(P)$
(Theorem \ref{thm:detecting_forward_realizability}).

For the reverse direction, we simply note that for a sequence $(E_i)_{i \in \Z}$ to be in $\B(P)$ it suffices to show that both its negative and non-negative tails are in $\B_+(P)$.
\end{proof}

Now that we have established the relationship between the forward and full bounce spectra, we have the following theorem, as stated in the introduction.

\begin{theorem}\label{thm:adj} The adjacency of edges in a polygonal billiard table $P$ can be reconstructed from $\B(P)$. \end{theorem}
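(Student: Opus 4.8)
The plan is to assemble Theorem \ref{thm:adj} directly from the machinery developed earlier in the section, treating it as a synthesis rather than a fresh argument. The key observation is that ``reconstructing adjacency from $\B(P)$'' means giving a purely symbolic criterion—phrased entirely in terms of data extractable from $\B(P)$—that holds for a pair of edges $\{A,B\}$ if and only if those edges are adjacent in $P$. Theorem \ref{thm:nonconvex_adjacent} already supplies exactly such a criterion: edges $A$ and $B$ are adjacent if and only if $\{A,B\}$ is a pair of common prefixes for a non-grazing bounce sequence. So the entire content of the proof is to verify that this criterion is genuinely a function of $\B(P)$ alone.

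First I would recall that the notions of \emph{common prefix} and \emph{grazing} are both defined in terms of $\overline{\B_+(P)}$ and $\overline{\B(P)}$, rather than $\B(P)$ itself. Since $\B(P)$ manifestly determines its own closure $\overline{\B(P)}$, the only gap to close is that these criteria also reference the forward spectrum $\B_+(P)$ (both directly, through common prefixes, and through the realizability condition implicit in ``non-grazing''). This is precisely the content of Corollary \ref{coro:full_determines_forward}, which establishes that $\B(P)=\B(P')$ if and only if $\B_+(P)=\B_+(P')$; equivalently, $\B(P)$ determines $\B_+(P)$. Chaining this with Lemma \ref{lem:determining_forward_closure} (which recovers $\overline{\B_+(P)}$ from $\overline{\B(P)}$) and Theorem \ref{thm:detecting_forward_realizability} (which detects realizability within $\overline{\B_+(P)}$), I obtain that every ingredient appearing in the statement of Theorem \ref{thm:nonconvex_adjacent} is computable from $\B(P)$.

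Putting these together, the proof reads as a short deduction: given $\B(P)$, pass to $\overline{\B(P)}$, then to $\overline{\B_+(P)}$ via Lemma \ref{lem:determining_forward_closure}, and use Theorem \ref{thm:detecting_forward_realizability} to isolate the genuinely realizable forward sequences and hence the grazing sequences. With this data in hand, I apply the adjacency criterion of Theorem \ref{thm:nonconvex_adjacent} to each pair of edge labels $\{A,B\}$ in the alphabet $\A$, declaring them adjacent exactly when they form a pair of common prefixes for a non-grazing bounce sequence. This determination depends only on $\B(P)$, which is what ``reconstructed from $\B(P)$'' means.

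I do not expect any serious obstacle here, since the hard analytic and geometric work has already been carried out in Theorems \ref{thm:nonconvex_adjacent} and \ref{thm:detecting_forward_realizability} and in Corollary \ref{coro:full_determines_forward}. The only point requiring care is bookkeeping: ensuring that the symbolic criterion for grazing—which is phrased using bi-infinite sequences in $\overline{\B(P)}$ together with forward realizability—has been fully reduced to $\B(P)$, with no residual dependence on geometric data such as actual edge directions or vertex positions. Once this is confirmed, the theorem follows immediately, and indeed it is best presented as a one-paragraph corollary of the section's prior results.
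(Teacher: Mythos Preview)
Your proposal is correct and matches the paper's approach exactly: the paper states Theorem \ref{thm:adj} immediately after Corollary \ref{coro:full_determines_forward} with no separate proof, treating it as the evident synthesis of Theorem \ref{thm:nonconvex_adjacent} (the adjacency criterion via common prefixes for non-grazing sequences) together with the chain $\B(P) \rightsquigarrow \overline{\B(P)} \rightsquigarrow \overline{\B_+(P)} \rightsquigarrow \B_+(P)$ established in Lemma \ref{lem:determining_forward_closure}, Theorem \ref{thm:detecting_forward_realizability}, and Corollary \ref{coro:full_determines_forward}. Your write-up simply makes explicit the bookkeeping that the paper leaves to the reader.
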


\section{Reconstructing angles}\label{sec:angles}

Now that we know how to determine adjacency from $\B(P)$, we turn to the question of reconstructing angles between adjacent pairs of edges. We will first show how to do this for rational angles, namely those of the form $\pi\cdot p/q$. We will then show how to reconstruct irrational angles from $\B(P)$ by using our technique for rational angles and applying a limiting argument.

The basic idea of the construction is shown in Figure \ref{fig:angles_idea}. We can get a coarse approximation to the angle between adjacent edges $A$ and $B$ simply by measuring the longest subword of alternating $A$s and $B$s in any bounce sequence. For example, the bounce sequence corresponding to the trajectory in Figure \ref{fig:angles_idea} (a) has $ABABA$ as a subword, which tells us that the angle at the vertex is at most $\pi/4$. This is because there are at least four copies of the angle sitting within the straight angle of $\pi$ carved out by the trajectory. By unfolding the table around and around the vertex, and using parallel trajectories that pass very close to the vertex, we can measure the angle as accurately as we like. For example, in Figure \ref{fig:angles_idea} (b) we use a total of four trajectories, and wrap the table around the vertex twice, cutting through a total of $18$ edges, to measure the vertex angle as $2\pi/9$.

\begin{figure}[ht]
  \centering
  \includegraphics[width=0.45\textwidth]{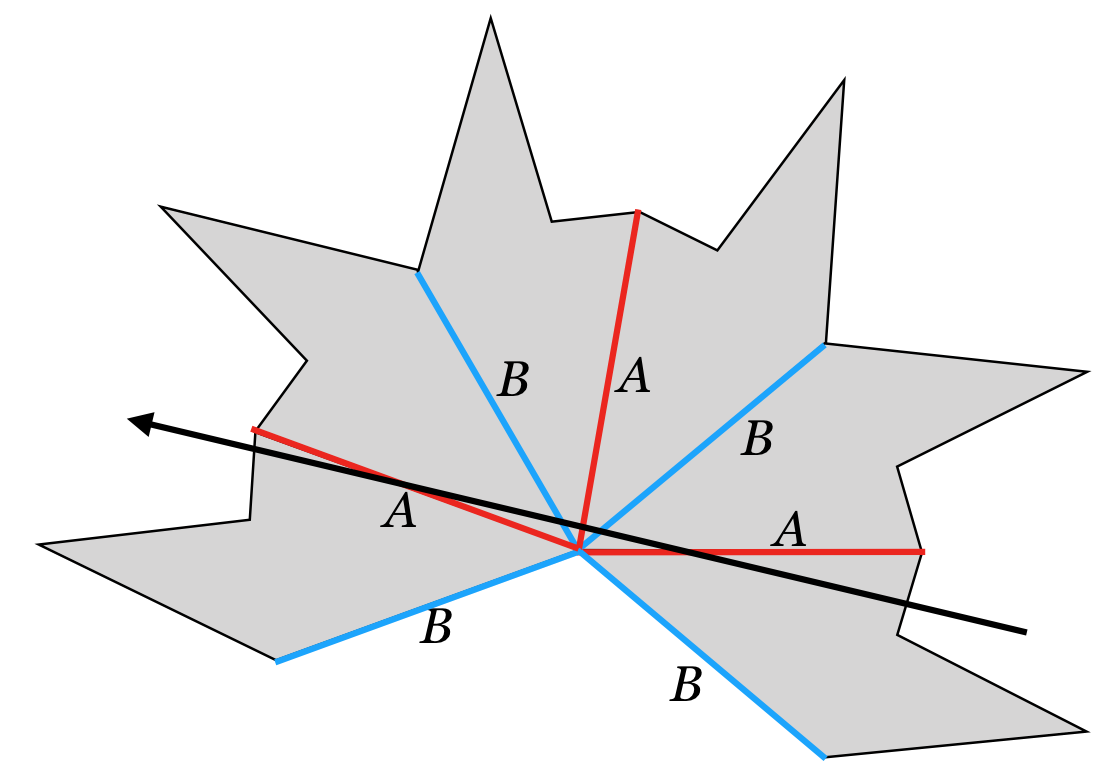}
  \includegraphics[width=0.53\textwidth]{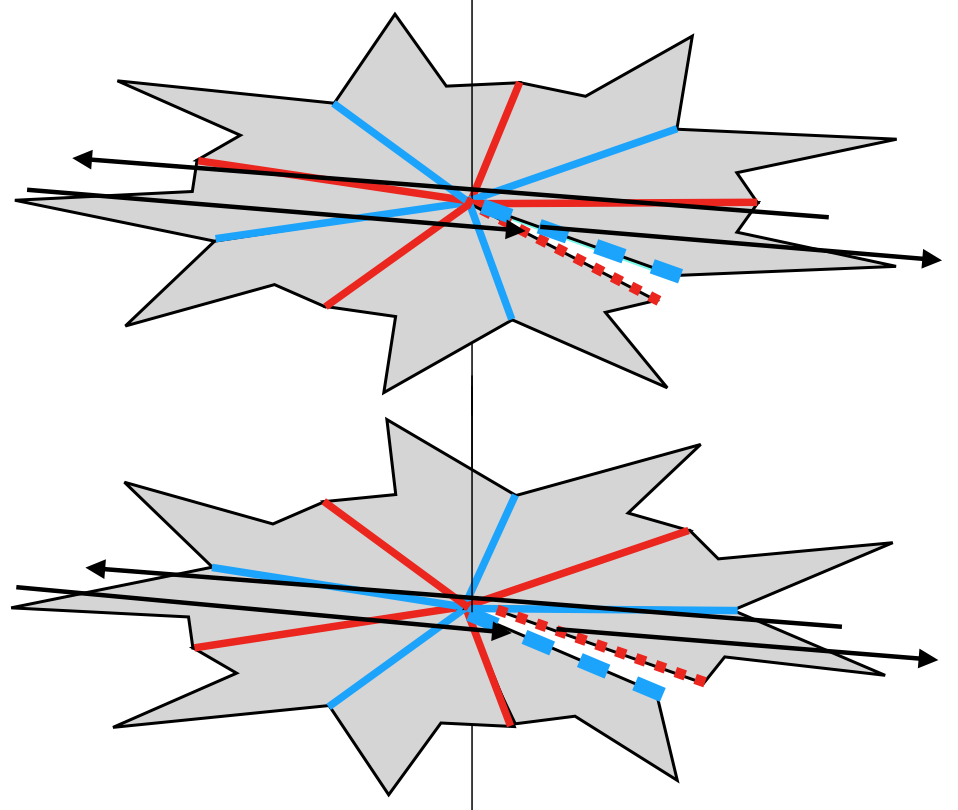}
  \caption{The basic idea of measuring angles: (a) The more edge copies a trajectory can cut through, the smaller the angle. (b) This is a side view. The blue dashed edges are identified, as are the red dotted edges. To get the most precise possible measurement of the angle, we unfold the table around the vertex until it matches up with itself again. We detect the angle using parallel trajectories that, together, ``wrap around'' the vertex.}
  \label{fig:angles_idea}
\end{figure}

In \S\ref{subsec:right}, we show how to detect right-angled tables, that is, tables where each angle is either $\pi/2$ or $3\pi/2$. In Proposition \ref{prop:hear_pi/2}, we use the``retro-reflecting'' property of right angles to detect right angles from information in $\B(P)$. In Proposition \ref{prop:hear_3pi/2_gen}, we give a characterization of angles of $3\pi/2$ using an ``unfolding'' method that generalizes to all angles of the form $\pi\cdot p/q$ for $p, q\in\mathbb{N}$.

In \S\ref{subsec:general_angles}, we give a general ``unfolding" construction to detect rational angles from information in $\B(P)$. In this construction, we develop the polygon around and around the specified vertex, until an unfolded edge matches up with an edge of the original polygon. This requires $p$ circuits of the vertex, for a total cone angle of $2p\pi$. We then construct trajectories on this unfolding and analyze them in order to recover $p$ and $q$. Two key elements of the construction are Sturmian sequences and billiard trajectories on the square table.

In \S\ref{subsec:oracle} we show that angles of an (arbitrary) polygon can be reconstructed from information in $\B(P)$. We prove this as Theorem \ref{thm:hearing_angles}.

In the Appendix \S \ref{app:effective_angles}, we give another explicit example of our unfolding construction, which the reader may use as a reference throughout the section in order to illuminate our discussion.\\

\noindent {\bf Conventions.} Throughout this section, we assume that we are trying to determine the angle between adjacent edges $A$ and $B$, whose adjacency has been determined through the bounce spectrum by Theorem \ref{thm:nonconvex_adjacent}. We further assume that edge $A$ is horizontal, that along edge $A$ the polygon lies above $A$, and that edge $B$ meets edge $A$ at the left endpoint of $A$. ``The vertex'' always refers to the point $p$ at which edges $A$ and $B$ meet.

\subsection{Detecting right angles}\label{subsec:right}

As shown in \S\ref{subsec:rect}, the information in the bounce spectrum cannot distinguish between two right-angled billiard tables that are images of each other under an edge-parallel stretching. Thus, it is important to know when we have a right-angled table. In this section, we will show how to detect a right-angled table from information in $\B(P)$, first by detecting angles of $\pi/2$ (Proposition \ref{prop:hear_pi/2}) and then by detecting angles of $3\pi/2$ (Proposition \ref{prop:hear_3pi/2_gen}).

Note that in the rest of the paper we give trajectories in the form $\tau(p,\theta)$, where $\theta$ is an angle in $[0,2\pi)$. In this section, it is sometimes convenient for us to specify trajectories using the form $\tau(p,\mathbf{v})$, where $\mathbf{v}=[x,y]$ is a vector that specifies the direction of the trajectory.

  \begin{lemma}[Retro-reflecting and splitting properties]\label{lem:retro} For perpendicular edges, we have the following simple relationships between incoming and outgoing directions (Figure \ref{fig:retro}):
\begin{enumerate}
\item (Retro-reflecting) If the angle between $A$ and $B$ is $\pi/2$, a trajectory $\ldots AB \ldots$ entering edge $A$ with direction  $\mathbf{v}$ leaves edge $B$ with direction  $-\mathbf{v}$.
\item (Splitting) Let $x,y>0$. If the angle between $A$ and $B$ is $3\pi/2$,
  a trajectory entering $A$ with direction $[-x, -y]$, and a trajectory entering $B$ with direction $[x,y]$, are parallel after the bounce with outgoing direction $[-x,y]$. Going the other way, parallel outgoing trajectories with direction $[-x,y]$ coming from edges $A$ and $B$ that meet at an angle of $3\pi/2$ must have met $A$ and $B$ with directions  $[-x, -y]$ and $[x,y]$, respectively.
\end{enumerate}
  \end{lemma}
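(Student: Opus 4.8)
The plan is to verify both claims by a direct computation with the law of optical reflection, using the convention that edge $A$ is horizontal and the polygon lies above $A$. The key observation is that reflection off an edge leaves the velocity component parallel to the edge unchanged and negates the perpendicular component; once we fix coordinates according to the section's conventions, the whole lemma reduces to tracking the sign of these two components through one or two bounces.

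For the retro-reflecting property, I would set up coordinates so that $A$ is horizontal and $B$ is vertical, since the angle between them is $\pi/2$. A trajectory $\ldots AB\ldots$ with incoming direction $\mathbf{v}=[x,y]$ first reflects off $A$: since $A$ is horizontal, this negates the $y$-component, producing $[x,-y]$. The trajectory then reflects off $B$, which is vertical, negating the $x$-component to produce $[-x,-y]=-\mathbf{v}$. (One should note that the roles of the two edges may be swapped depending on which is hit first, but by symmetry the composition of the two perpendicular reflections is the same, namely $\mathbf{v}\mapsto-\mathbf{v}$, which is exactly the antipodal map.) This is the entire content of part (1).

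For the splitting property, the angle $3\pi/2$ means $A$ and $B$ are again perpendicular as lines, but now the reflex configuration places the interior of $P$ on the far side, so each trajectory reflects off only a single edge. I would check each edge separately: a trajectory entering $A$ (horizontal) with direction $[-x,-y]$ reflects to $[-x,y]$ by negating the $y$-component, and a trajectory entering $B$ (vertical) with direction $[x,y]$ reflects to $[-x,y]$ by negating the $x$-component. Both outgoing directions equal $[-x,y]$, establishing that they are parallel. The converse statement in part (2) is just the observation that optical reflection is an involution, so running the two reflections backwards from the common outgoing direction $[-x,y]$ uniquely recovers the incoming directions $[-x,-y]$ and $[x,y]$ off $A$ and $B$ respectively.

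This lemma is essentially a bookkeeping computation, so I do not expect any substantive obstacle. The only point requiring care is orienting the two edges correctly relative to the polygon's interior in the $3\pi/2$ case, so that the correct perpendicular component is the one being reflected and the signs $x,y>0$ are consistent with a genuine inward-then-outward trajectory; a clearly labeled figure (here, Figure \ref{fig:retro}) carries most of this burden, and the argument itself is a two-line sign check in each case.
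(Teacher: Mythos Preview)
Your proposal is correct and follows essentially the same approach as the paper: both arguments reduce to the elementary observation that reflection off a horizontal edge negates the $y$-component and reflection off a vertical edge negates the $x$-component, with a figure carrying the orientation bookkeeping. In fact your write-up is more explicit than the paper's own proof, which largely defers to Figure~\ref{fig:retro} and a reference to \cite{100deg} rather than spelling out the sign computation.
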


  \begin{proof}
See Figure \ref{fig:retro}. For part (1), note that the same construction gives a period-6 billiard trajectory in a right triangle whose hypotenuse is perpendicular to the trajectories; see \cite{100deg}, \S 1.1. For part (2), note that the restriction that the outgoing vector is of the form $[x,-y]$, where $x,y>0$, is required so that one trajectory with that direction can come from edge $A$, and a parallel trajectory can come from edge $B$.
  \end{proof}

  \begin{figure}[ht]
\centering
\includegraphics[width=0.8\textwidth]{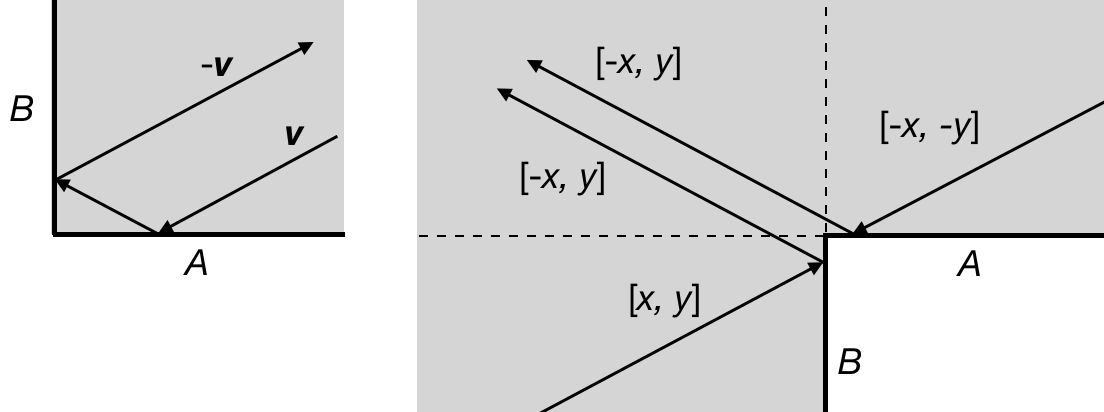}
\caption{The ``retro-reflecting''  (left, angle of $\pi/2$) and ``splitting'' (right, angle of $3\pi/2$) properties of perpendicular edges allow us to detect them in the bounce spectrum.}
\label{fig:retro}
  \end{figure}

  \begin{proposition}\label{prop:hear_pi/2}
$A$ and $B$ meet at an angle of $\pi/2$ if and only if there is an aperiodic non-grazing sequence $(E_i)_{i\in\N}$ such that
\[ E_n \cdots E_1\   A B  \ E_1 \cdots E_n\  \in\  \L\  \text{ for all }n\in\mathbb{N}.\]
  \end{proposition}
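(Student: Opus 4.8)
The statement characterizes a right angle via the existence of a bounce word of the palindromic form $E_n \cdots E_1\, A B\, E_1 \cdots E_n$ for every $n$. The key geometric input is the retro-reflecting property (Lemma \ref{lem:retro}(1)): if the angle is $\pi/2$, a trajectory hitting $A$ then $B$ reverses its direction exactly, so its future path is the time-reversal of its past. My plan is to prove the two directions separately, using the retro-reflecting property to build the palindromic words in the forward direction and using the converse (reverse) part of Lemma \ref{lem:retro}, together with the collinearity/uniqueness machinery, to rule out all non-right angles.

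\textbf{Forward direction ($\pi/2 \Rightarrow$ palindromic words exist).} Suppose $A$ and $B$ meet at $\pi/2$ at the vertex $p$. Fix a nonsingular direction $\theta$ so that the trajectory $\tau$ emanating from $p$ into the interior of $P$ realizes some sequence $(E_i)_{i \in \N}$, and arrange (as in the adjacency arguments) that $\pi+\theta$ points outside of $P$ so that this sequence is aperiodic and non-grazing; the non-grazing condition is exactly what Theorem \ref{thm:nonconvex_adjacent} gives us when $A,B$ are genuinely adjacent. Now I would exploit retro-reflection in the unfolding: in $\D_P$, the corner at $p$ acts as a point-reflection on trajectories that bounce $A$ then $B$. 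Concretely, take a point $q$ very close to $p$ on (say) the angle bisector and shoot a trajectory that first hits $A$, retro-reflects off the corner $AB$, and then continues. Because the reflection through the corner sends direction $\mathbf v$ to $-\mathbf v$, the sequence of edges hit \emph{after} the $AB$ bounce is forced to be the reversal of the sequence hit \emph{before} it. Developing along such a near-vertex trajectory and letting $q \to p$, the word read off is precisely $E_n \cdots E_1\, A B\, E_1 \cdots E_n$ for arbitrarily large $n$; Lemma \ref{lem:neighborhood_in_corr} guarantees these words are genuinely realized (hence lie in $\L$) by a positive-width family of nearby parallel trajectories. This produces the required words for all $n$.

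\textbf{Converse direction (palindromic words for all $n$ $\Rightarrow$ angle is $\pi/2$).} This is where I expect the main difficulty. The hypothesis gives, for each $n$, a realizing trajectory $\tau_n$ for the word $w_n := E_n \cdots E_1\, A B\, E_1 \cdots E_n$. Passing $n \to \infty$, Corollary \ref{cor:realizability} forces the incoming and outgoing directions at the central $AB$ bounce to stabilize, and the aperiodicity plus Lemma \ref{lem:ideal_uniqueness} pins down a unique limiting ideal trajectory $\tau$. The palindromic symmetry of $w_n$ means the past and future halves of $\tau$ are mirror images, so $\tau$ must retro-reflect off the corner between $A$ and $B$: its outgoing direction is exactly the negation of its incoming direction. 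The crux is then the elementary plane-geometry fact that a two-bounce segment $A B$ reverses direction \emph{for every incoming direction} if and only if $A \perp B$, i.e.\ the angle is $\pi/2$. I would argue the contrapositive: if the angle were $\alpha \neq \pi/2$, then the composition of the two reflections across $A$ and $B$ is a rotation by $2\alpha \neq \pi$ (rather than the point-reflection by $\pi$ that negation requires), so no nontrivial incoming direction can be exactly reversed, contradicting the forced retro-reflection of $\tau$. The care needed here is handling the limit robustly—ensuring the limiting $\tau$ really does bounce off the interiors of $A$ and $B$ at the corner rather than grazing a different vertex—which is controlled by the non-grazing hypothesis and the collinearity constraints of Lemma \ref{lem:collinear_limiting_points}. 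This pinch between the algebraic rotation-by-$2\alpha$ computation and the geometric limit is the step I expect to require the most attention.
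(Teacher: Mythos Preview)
Your proposal is essentially the same approach as the paper's proof. In the forward direction both arguments use retro-reflection to produce the palindromic words from trajectories near the vertex; in the converse direction both reduce to the observation that the two bounces off $A$ then $B$ act as a rotation by twice the vertex angle, and the palindromic constraint (via uniqueness of the ideal trajectory realizing the aperiodic $(E_i)$) forces that rotation to be by $\pi$. The paper packages this as ``$\tau_+$ and its rotated copy $\tau_+'$ in the development $\D_P(B,A)$ must both be collinear with the unique ideal trajectory $\tau$,'' while you phrase it as ``the composition of reflections is a rotation by $2\alpha$''; these are the same computation.

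One small omission: your contrapositive ``if $\alpha\neq\pi/2$ then $2\alpha\neq\pi$'' is only valid once you know $\alpha\in(0,\pi)$, since $2\alpha$ should be read modulo $2\pi$ and e.g.\ $\alpha=3\pi/2$ also gives rotation by $\pi$. The paper closes this by observing that $AB\in\L$ (it is a subword of every $w_n$) forces the angle to be non-reflex; you should add that one line.
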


  \begin{proof}
Suppose that $A$ and $B$ meet at an angle of $\pi/2$. Consider a nonsingular one--sided trajectory $\tau$ from the vertex. An incoming trajectory parallel to $\tau$, meeting edge $A$ at a point $a$ close to the vertex, has an incoming bounce sequence of the form $(\ldots E_{-2}, E_{-1}, A)$ for some edges $E_i$. The trajectory bounces off of side $B$ and then, by Lemma \ref{lem:retro} (1), leaves parallel to $\tau$, with some outgoing bounce sequence $(B, E_1, E_2, \ldots)$. Since the incoming and outgoing trajectories are parallel and close together, and since $\tau$ is nonsingular, they will agree for some number $n$ of bounces, so $E_i = E_{-i}$ for $i=1, \ldots, n$. The closer to the vertex the trajectory meets $A$, the closer together the incoming and outgoing trajectories are, so the larger the $n$ is for which the incoming and outgoing bounce sequences agree (in reverse order). Thus, arbitrarily long such words are valid bounce sequences.

To show the other direction, suppose that $A$ and $B$ meet at an angle of $\theta$ and that there is a non-grazing sequence $(E_i)$ as above. We will show that $\theta=\pi/2$. In the development $\D_P(B, A)$, there is the original polygon $P$, and a copy $P'$ of $P$ resulting from reflection over the edge $B$ then $A$. Observe that $P'$ is equal to the rotation of $P$ about the vertex by an angle of $2\theta$ (see Figure \ref{fig:broken_line}). Consider the maximal nonsingular ray $\tau_+$ that realizes $(E_i)_{i\in\mathbb{N}}$. Since $\{A,B\}$ is a set of common prefixes for $(E_i)$, the methods of \S\ref{sec:adjacency} imply that $\tau_+$ emanates from the vertex.

Let $\tau_+'$ denote the image of $\tau_+$ under rotation by $2\theta$ (solid black in the figure). By construction, $\tau'_+$ also realizes $(E_i)_{i\in\mathbb{N}}$.

For each $n >0$, consider the corridors $\text{Corr}_1(n)$ and $\text{Corr}_2(n)$ associated with the words $E_1 \ldots E_n$ and $BA E_1 \ldots E_n$, respectively. Since $\tau_+$ realizes $E_1 \ldots E_n$, it lies in $\text{Corr}_1(n)$ for all $n$. Similarly, and since $\tau_+'$ realizes the same word but starts in $P' = r_A r_B P$, we have that $\tau_+'$ lies in $\text{Corr}_2(n)$.

Now by assumption $(\ldots , E_2, E_1, A, B, E_1, E_2, \ldots ) \in \overline{\B(P)}$ and hence by Lemma \ref{lem:ideal_uniqueness} there is a unique ideal trajectory $\tau$ associated to this bounce sequence. In particular, $\tau$ must lie in both $\text{Corr}_1(n)$ and $\text{Corr}_2(n)$ for all $n$, and hence $\tau_+$ and $\tau_+'$ must be collinear with $\tau.$

Thus since an ideal trajectory is a straight line, and $\tau_+$ and $\tau_+'$ meet at angle $2 \theta$, we see that $2\theta = \pi$. As $AB \in \L$, we see that $A$ and $B$ cannot meet at a reflex angle, and so we may conclude that the angle between $A$ and $B$ must be $\pi/2$.
  \end{proof}

  \begin{figure}[ht]
\centering
\includegraphics[width=0.6\textwidth]{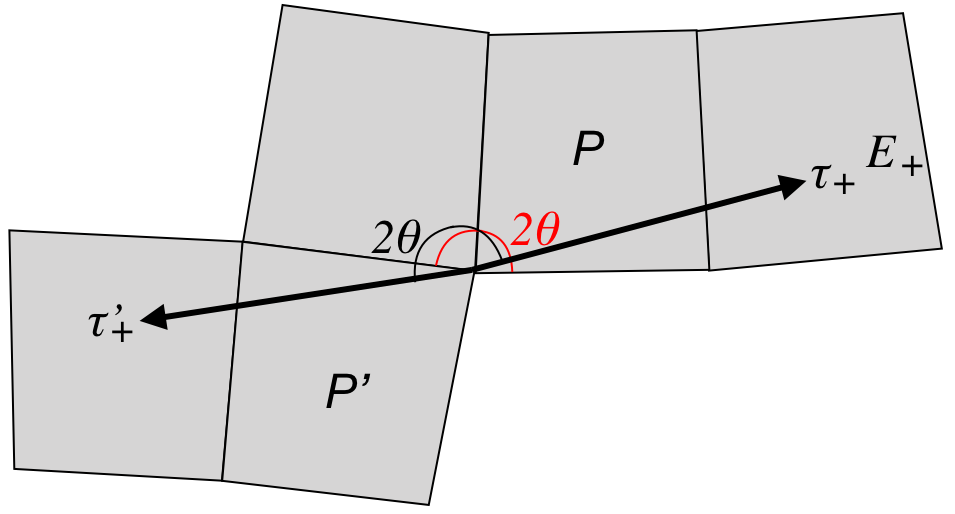}
\caption{The rays used in the proof of Proposition \ref{prop:hear_pi/2}. If a trajectory corresponding to a valid bounce sequence tracks both rays, then it must be straight, so $2\theta = \pi$ and $\theta=\pi/2$.}
\label{fig:broken_line}
  \end{figure}

  It is possible to detect an angle of $3\pi/2$ using the ``splitting'' property, similar to the method of Proposition \ref{prop:hear_pi/2}. Instead, we will use an ``unfolding'' method, in order to introduce the general theory that follows.

\begin{proposition}
\label{prop:hear_3pi/2_gen}\label{prop:3pi/2gen}
For adjacent edges $A$, $B$, the interior angle between $A$ and $B$ is $3\pi/2$ if and only if there exist grazing sequences $(E^1_i)_{i\in\Z}, (E^2_i)_{i\in\Z}, (E^3_i)_{i\in\Z},  \in \overline{\B(P)}$ such that the six words
\begin{align}
E^1_{-n}\  \ldots E^1_{-2}\  E^1_{-1}\  &\phantom{A\ } E^1_1\  E^1_2\  \ldots,\  E^1_n \label{tau1} \\
E^1_n\  \ldots E^1_{2}\  E^1_{1}\  &B\  E^2_{-1}\  E^2_{-2}\  \ldots\  E^2_{-n} \label{tau2}\\
E^2_{-n}\  \ldots E^2_{-2}\  E^2_{-1}\  &A\  E^2_1\  E^2_2\  \ldots\  E^2_n \label{tau3}\\
E^2_n\  \ldots E^2_{2}\  E^2_{1}\  &\phantom{A\ } E^3_{-1}\  E^3_{-2}\  \ldots\  E^3_{-n} \label{tau4}\\
E^3_{-n}\  \ldots E^3_{-2}\  E^3_{-1}\  &B\  E^3_1\  E^3_2\  \ldots\  E^3_n\label{tau5} \\
E^3_n\  \ldots E^3_{2}\  E^3_{1}\  &A\  E^1_{-1}\  E^1_{-2}\  \ldots\  E^1_{-n} \label{tau6}
\end{align}
are in $\L$ for all $n\in\N$.
\end{proposition}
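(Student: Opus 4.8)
The plan is to translate the six words into straight-line (\emph{ideal}) trajectories in the development of $P$ obtained by unfolding repeatedly around the vertex $p$ at which $A$ and $B$ meet, in the spirit of the proof of Proposition \ref{prop:hear_pi/2} but now carrying the development three times around $p$. Write $\theta$ for the interior angle at $p$, and develop around $p$ by reflecting $P$ alternately over the images of the edges $B$ and $A$ incident to $p$. Each reflected copy occupies interior angle $\theta$ at $p$, so four copies $P_0, P_1, P_2, P_3$ fit around $p$ with total cone angle $4\theta$, and the development closes up (with $P_3$ reflecting back onto $P_0$) precisely when $4\theta \in 2\pi\Z$. When $\theta = 3\pi/2$ we have $4\theta = 6\pi$, so the four copies wrap three times around $p$. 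Traveling once around $p$ crosses, in order, images of the edges $B, A, B, A$; these four crossings are the sources of the four insertions in \eqref{tau2}, \eqref{tau3}, \eqref{tau5}, and \eqref{tau6}, while the two places where the winding trajectory passes straight through the vertex region account for the insertion-free words \eqref{tau1} and \eqref{tau4}.

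\textbf{Forward direction.} Suppose $\theta = 3\pi/2$ and fix a nonsingular direction. In each of the three circuits of the development there is a bi-infinite geodesic grazing $p$; call these $\tau^1, \tau^2, \tau^3$ and let $(E^j_i)_{i\in\Z} \in \overline{\B(P)}$ be the associated bounce sequences, which are grazing by construction. As in the construction of ideal trajectories in \S\ref{sec:ideal}, each $\tau^j$ is a limit of nonsingular trajectories passing arbitrarily close to $p$, and by Lemma \ref{lem:neighborhood_in_corr} the finite bounce words cut out by such trajectories are robustly realized. Hence for every $n$ a single nonsingular trajectory tracks $\tau^j$ for $n$ bounces on each side of $p$, and reading off the bounce words as these trajectories wind once around $p$ — tracking $\tau^1$, crossing the image of $B$ into $\tau^2$, and so on around the cycle $E^1_- \to E^1_+ \to E^2_- \to E^2_+ \to E^3_- \to E^3_+ \to E^1_-$ — realizes all six words. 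As $n$ is arbitrary, the six words lie in $\L$ for every $n$.

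\textbf{Backward direction.} Conversely, assume all six words lie in $\L$ for every $n$ with the $E^j$ grazing. By Lemma \ref{lem:ideal_uniqueness} each $E^j$ has a unique ideal trajectory $\tau^j$, and Lemmas \ref{lem:collinear_limiting_points} and \ref{lem:what_does_abut_look_like} convert the six words into collinearity statements in the development: each insertion-free word forces the two tails of the corresponding $\tau^j$ to be collinear through $p$, and each word with an inserted $A$ or $B$ forces the relevant tail of one $\tau^j$ to continue, across that image edge, collinearly into a tail of the next. Propagating these directions around the single six-step cycle, and using that developing over two consecutive edges rotates the picture by $2\theta$, forces the development to close up after exactly three planar circuits, so $4\theta = 6\pi$ and $\theta = 3\pi/2$. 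It is the grazing (hence reflex) structure witnessed by the $E^j$ that guarantees three circuits rather than one, ruling out the competing non-reflex solution $\theta = \pi/2$ of $4\theta \in 2\pi\Z$, exactly as the condition $AB \in \L$ is used at the end of the proof of Proposition \ref{prop:hear_pi/2}.

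\textbf{Main obstacle.} The crux is the backward direction: one must show that realizability of exactly these six words, with exactly this alternating pattern of $A$, $B$, and empty insertions, for all $n$ forces the cone angle to be $6\pi$ — three circuits, not one — and hence pins $\theta$ to $3\pi/2$ rather than to another angle producing superficially similar finite data. This is where the uniqueness of ideal trajectories and the collinearity lemmas must be leveraged to rigidify the entire wrapped development, and where a careful angle-chase around the six-step cycle is needed to exclude all competing configurations.
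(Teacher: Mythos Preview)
Your proposal is correct and follows essentially the same approach as the paper: both directions hinge on the four-copy unfolding around $p$ carrying total cone angle $4\theta$, with the six words witnessing six successive half-turns of $\pi$ (hence $6\pi$ total) distributed among four reflected copies (one per $A$/$B$ insertion), giving $\theta = 6\pi/4 = 3\pi/2$. The paper's forward construction is more explicit in building the six approximating trajectories $\tau_1,\dots,\tau_6$ one at a time, and its backward direction phrases the count as ``$6\pi$ of angle around $p$ divided by four unfolded copies'' rather than your equivalent ``$4\theta = 6\pi$,'' but the substance is identical.
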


Here we label each of the six sequences so that we may refer to them in the proof below.

  \begin{proof}
First, suppose that the angle between $A$ and $B$ is $3\pi/2$. We will construct the required sequences. As shown in Figure \ref{fig:3pi2-unfold}, we can unfold the angle of $3\pi/2$ around the vertex. We start with the picture in the upper left, with edge $A$ horizontal and the polygon above it, meeting $B$ at its left endpoint as usual. In the Figure, we label the edges $A_1$ and $B_1$ to emphasize that it is the starting picture.

We unfold across edge $B_1$, creating a second copy of the table (upper right). We then unfold across edge $A_2$, creating a third copy of the table (lower right). We unfold across edge $B_3$, creating a fourth copy of the table (lower left). Now if we unfold across the edge labeled $A$, we obtain a copy of the table that is in the same orientation as the starting picture, so this edge is again $A_1$, and four copies is all we need. This unfolding yields a total cone angle of $3\cdot 2\pi$ around the vertex.

Consider a trajectory $\tau^1$ (solid black in Figure \ref{fig:3pi2-unfold})  that grazes $p$ and is nonsingular outside of $p$, and let $\B(\tau^1) = (E^1_i)_{i\in\Z}$ with $E^1_{-1}$ and $E^1_{1}$ being the edges hit before and after grazing $p$ (the upper-left corner of Figure \ref{fig:3pi2-unfold}).

Parallel to $\tau^1$ and above it is a trajectory $\tau_1$ whose direction is $[-x,-y]$ for some $x,y>0$ (the dashed black trajectory in Figure \ref{fig:3pi2-unfold}). As $\tau_1\to\tau^1$, $\B(\tau_1)$ agrees with $\B(\tau^1)$ for arbitrarily many bounces, giving us the bounce sequences \eqref{tau1}.

\begin{figure}[!h]
\centering
\includegraphics[width=0.7\textwidth]{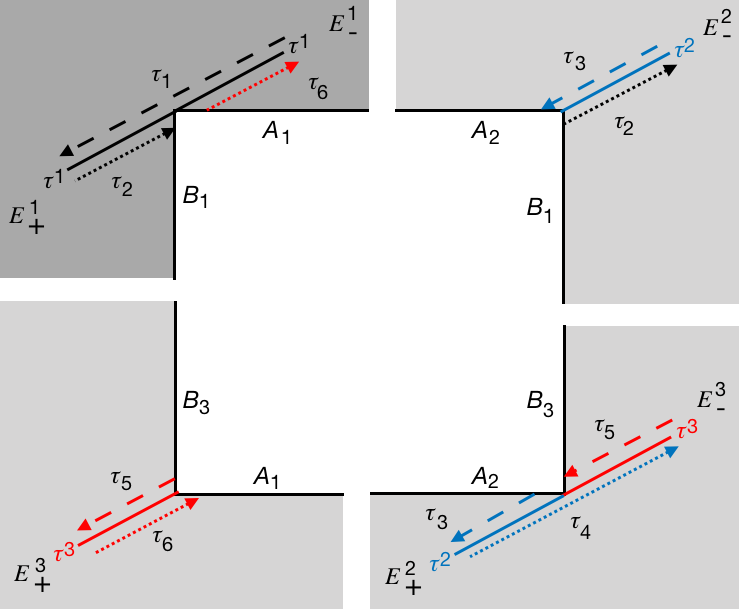}
\caption{Starting with the upper-left dark polygon, we unfold the angle of $3\pi/2$ around the vertex, yielding 4 copies for a total angle of $3\cdot 2\pi$. We use the six nearby parallel trajectories $\tau_1, \ldots, \tau_6$ and their associated bounce sequences to uniquely characterize the vertex angle as $3\pi/2$.}
\label{fig:3pi2-unfold}
\end{figure}

Parallel to $\tau^1$ and below it is a trajectory $\tau_2$ (dotted black in Figure \ref{fig:3pi2-unfold}) whose direction is $[x,y]$ for some $x,y>0$. As $\tau_2\to\tau^1$, $\B_-(\tau_2)$ and $\B_+(\tau^1)$ agree for arbitrarily many bounces. Since $\tau_2$ is below $\tau^1$, which passes through $p$, $\tau_2$ passes through edge $B$. Thus we get a $B$ in $\B(\tau_2)$, and we must unfold the table across edge $B$, yielding the picture in the upper-right corner of Figure \ref{fig:3pi2-unfold}.

Now, above $\tau_2$ and parallel to it, passing through $p$ in the unfolded copy of the table, is a singular trajectory $\tau^2$ (solid blue in Figure \ref{fig:3pi2-unfold}), with $\B(\tau^2) =  (E^2_i)_{i\in\Z}$, where $E^2_{-1}$ and $E^2_{1}$ are the edges hit before and after grazing $p$. As $\tau_2\to\tau^2$, $\B_+(\tau_2)$ agrees with $\B_-(\tau^2)$ for arbitrarily many bounces, so $\B_+(\tau_2) = \B_-(\tau^2)$. This construction yields the bounce sequences \eqref{tau2}.

Parallel to $\tau^2$ and above it is a trajectory $\tau_3$ (the dashed blue trajectory in Figure \ref{fig:3pi2-unfold}) whose direction is $[-x,-y]$ for some $x,y>0$. As above, as $\tau_3\to\tau^2$, $\B_-(\tau_3)$ and $\B_-(\tau^2)$ agree for arbitrarily many bounces. Since $\tau_3$ is above $\tau^2$, it passes through edge $A$, so the next letter in $\B(\tau_3)$ is $A$, and we must unfold the table across edge $A$, yielding the lower-right picture in Figure \ref{fig:3pi2-unfold}.

Now, below $\tau_3$ and parallel to it, passing through $p$ in the unfolded copy of the table, is another singular trajectory, which we may consider to be the positive half of $\tau^2$, since together the two halves comprise the limiting trajectory as $\tau_3$ approaches $p$, which is the entire grazing trajectory $\tau^2$. So as above, by construction, as $\tau_3\to\tau^2$, $\B_+(\tau_3)$ and $\B_+(\tau^2)$ agree for arbitrarily many bounces, yielding the sequences \eqref{tau3}.

The sequences \eqref{tau4} and \eqref{tau5} are obtained similarly to sequences \eqref{tau2} and \eqref{tau3}, respectively.

Finally, below $\tau^3$ and parallel to it is a trajectory $\tau_6$ (the dotted red trajectory in the figure) whose direction is $[x,y]$ for some $x,y>0$. By construction, as $\tau_6\to\tau^3$, $\B_-(\tau_6)$ and $\B_+(\tau^6) = E^6_-$ agree for arbitrarily many bounces. Since $\tau_6$ is below $p$, it passes through edge $A$, so the next edge in $\B(\tau_6)$ is $A$, and we must unfold the table across edge $A$. This yields a table in exactly the same orientation that we started with (we are back in the upper left corner of the figure), so the forward half of $\tau_6$ is below $\tau^1$, so $\B_+(\tau_6)$ and $\B_-(\tau^1)$ agree for arbitrarily many bounces, yielding the sequences \eqref{tau6}.

This concludes the constructive proof of the existence of the bounce sequences \eqref{tau1}--\eqref{tau6}.\\

For the other direction, suppose that the bounce sequences \eqref{tau1}--\eqref{tau6} are in $\L$ for all $n\in\N$. Given this information from $\B(P)$, we will show that the angle between $A$ and $B$ is $3\pi/2$.

By approximating the grazing sequence \eqref{tau1} by nonsingular trajectories, we see that the limiting ideal trajectory $\tau^1$ subtends an angle of $\pi$ on the side from which it is approximated.

The grazing sequences \eqref{tau2} give us an additional $\pi$ worth of angle around $p$, and tell us that one of the edges at $p$ is $B$. The grazing sequences \eqref{tau3} give us an additional $\pi$ worth of angle around $p$, and tell us that one of the edges at $p$ is $A$. The same holds for \eqref{tau4}, \eqref{tau5} and \eqref{tau6}, so there is a total of $6\pi$ of angle around the unfolded vertex $p$, which is the vertex between edges $A$ and $B$.

Since four edges ($B$, $A$, $B$, $A$) total appear in all of the grazing sequences, we know that there are four unfolded copies of the table at the vertex $p$. So the total angle around $p$ is $6\pi/4 = 3\pi/2$.
  \end{proof}

  \subsection{Detecting rational angles}\label{subsec:general_angles}

  Now we generalize the method used in Proposition \ref{prop:hear_3pi/2_gen} for angle $3\pi/2$ to any rational angle $\pi\cdot p/q$. The idea is that, given a vertex with this angle, we can unfold the table around the vertex until we are back to where we started. Assuming that $p/q$ is in lowest terms, this requires $2q$ copies of the table, for a total cone angle of $\pi \cdot p/q \cdot 2q = 2p\pi$. We can choose a grazing trajectory through the vertex that is nonsingular in the complement of the vertex, and then choose a trajectory above it, then below it, then above the next corresponding one, then below that one, etc. as in Proposition \ref{prop:3pi/2gen}. In total, we will use $2p$ trajectories to wind around around the vertex $p$ times. The number of $A$s and $B$s in the middle of the corresponding sequences determines $q$. We will see that having $2p$ trajectories, with a total of $2q$ $A$s and $B$s, encodes an angle of $\pi\cdot p/q$.

First, we will introduce several auxiliary definitions, so that we can describe the alternating $A$s and $B$s in the middle of each sequence. We call these \emph{insertions}. In Theorem \ref{thm:hearangles}, for an angle of $\pi\cdot p/q$ we will use a list of $2p$ infinite sequences; the total number of alternating $A$s and $B$s in the ``middle'' will be $2q$. The $A$s and $B$s alternate within each sequence and the next sequence picks up where the previous one left off, so if one ends $\ldots A, B, A$, the next one will start $B, A, \ldots$.

\begin{definition}
Given a sequence $(E) = \ldots E_{-2} E_{-1} E_1 E_2\ldots$, an \emph{insertion} is a finite (possibly empty) string of alternating $A$s and $B$s that are inserted between $E_{-1}$ and $E_1$.
\end{definition}

For example, in \eqref{tau1} the insertion is the empty string, and in \eqref{tau2} the insertion is $B$. The terminology \emph{insertion} is chosen by analogy with inserting base pairs into a DNA sequence.

  Now we define a function to say how many alternating $A$s and $B$s comprise the insertion in the middle of each sequence that we use to measure angles. Given a rational angle $\pi\cdot p/q$ in lowest terms, compute the bounce sequence corresponding to one period of the trajectory with slope $p/q$ on the square billiard table, where $0$ and $1$ label the horizontal and vertical edges of the table, respectively. The method for doing this is described in \cite{flatsurfaces}, \S 7, Algorithm 7.6. (Note that this finite sequence is the same as the double of the cutting sequence for the corresponding trajectory on the square torus; see Figure \ref{fig:grid}.)

We will use the convention that such a sequence always starts with $0$. Notice that this sequence consists of $2p$ 0s and $2q$ 1s. This is essentially because $p$ corresponds to the ``rise,'' so it is the number of times each of the top and bottom edges are hit, and $q$ corresponds to the ``run,'' so it is the number of times each of the left and right edges are hit; see  \cite{flatsurfaces} for details.

We will think of the bounce sequence as consisting of (possibly empty) strings of 1s, separated by single 0s. Since the sequence has $2p$ 0s, it also has $2p$ of the (possibly empty) strings of 1s.

  \begin{definition}
Define the insertion string $\ins_i(p/q)$ of alternating $A$s and $B$s for $i=1, \ldots, 2p$ as follows:
\begin{itemize}
\item Each $\ins_i(p/q)$ consists of a string (possibly empty) of alternating $A$s and $B$s. We fix the convention that $\ins_1$ starts with $B$, unless $\ins_1$ has length $0$, in which case the first nonzero $\ins_i$ starts with $B$. For all other indices, $\ins_i$ starts with whichever letter $\ins_{i-1}$ (or the the previous nonempty string)  did \emph{not} end with, considering the indices modulo $2p$.
\item The length of the string $\ins_i(p/q)$ is the length of the $i^{\text{th}}$ string of 1s in the bounce sequence corresponding to slope $p/q$ on the square billiard table.
\end{itemize}
  \end{definition}

  \begin{figure}[!h]
\centering
\includegraphics[width=0.125 \textwidth]{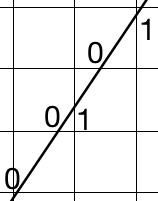} \qquad \qquad
\includegraphics[width=0.6\textwidth]{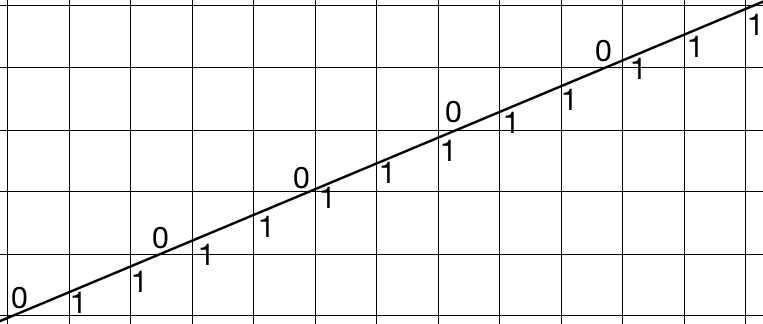}
\caption{A line of slope $3/2$ on the square grid (left), shown with one period of its associated cutting sequence, which is $00101$. The square table bounce sequence given in \eqref{bounce} is this one repeated twice. A line of slope $5/12$ on the square grid (right), shown with one period of its associated cutting sequence, which is $01101101110110111$. The square table bounce sequence given in \eqref{bouncebounce} is this one repeated twice, after a cyclic permutation.}
\label{fig:grid}
  \end{figure}

  \begin{example}
Given the angle $3\pi/2$, we compute the bounce sequence corresponding to slope $p/q = 3/2$, which is
\begin{equation}\label{bounce}
0010100101.
\end{equation}
See the left side of Figure \ref{fig:grid}. The lengths of the strings of 1s, which are also the lengths of the strings $\ins_1(p/q)$ -- $\ins_6(p/q)$, are 0, 1, 1, 0, 1, 1, respectively. So
\begin{align*}
\begin{split}
\ins_1(3/2) & = - \\
\ins_2(3/2) & = B \\
\ins_3(3/2) & = A 
\end{split}
\begin{split}
\ins_4(3/2) & = - \\
\ins_5(3/2) & = B \\
\ins_6(3/2) & =A, 
\end{split}
\end{align*}
just as in equations \eqref{tau1} - \eqref{tau6}.
  \end{example}

  \begin{example}
Given the angle $5\pi/12$  (an example that we work out in the Appendix \S \ref{app:effective_angles}), we compute the bounce sequence corresponding to slope $p/q = 5/12$, which is
\begin{equation}\label{bouncebounce}
0110111011011011101101110110110111.
\end{equation}
See the right side of Figure \ref{fig:grid}. So the strings $\ins_1(5/12)$ through $\ins_{10}(5/12)$ have lengths 2, 3, 2, 2, 3, 2, 3, 2, 2, 3, respectively. Thus $\ins_1(5/12) = B, A$; $\ins_2(5/12) = B, A, B$; \ldots; $\ins_{10}(5/12) = A, B, A$, as the reader will verify in \eqref{eq:5pi12}.
  \end{example}

  \begin{lemma}\label{lem:2q}
The total length of the insertions in $\ins_1(p/q), \ldots,  \ins_{2p}(p/q)$ is $2q$.
  \end{lemma}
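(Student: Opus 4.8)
The plan is to prove Lemma \ref{lem:2q} by directly counting letters in the bounce sequence for slope $p/q$ on the square table. The key observation, already recorded in the discussion preceding the lemma, is that this bounce sequence consists of exactly $2p$ copies of the symbol $0$ and $2q$ copies of the symbol $1$. By construction, the strings $\ins_1(p/q), \ldots, \ins_{2p}(p/q)$ are in length-preserving bijection with the $2p$ maximal strings of $1$s that are separated by the $2p$ individual $0$s. Therefore the total length of all the insertions equals the total number of $1$s appearing in the bounce sequence, which is $2q$.

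The first step I would take is to justify the count of $1$s rigorously rather than by appeal to intuition. Recall that the square-table bounce sequence for slope $p/q$ is the \emph{double} of the cutting sequence of a line of slope $p/q$ on the square torus (Figure \ref{fig:grid}). On the unit torus, a line of slope $p/q$ (in lowest terms) traversing one period crosses $q$ vertical grid lines and $p$ horizontal grid lines, since it moves a net horizontal distance whose denominator is $q$ and net vertical distance with numerator $p$; under the convention that $0$ labels horizontal edges and $1$ labels vertical edges, this produces a cutting word with exactly $p$ zeros and $q$ ones. Doubling this word (reflecting the torus cutting sequence to the square billiard bounce sequence) yields $2p$ zeros and $2q$ ones, as claimed in the text.

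The second step is to make the correspondence between insertions and runs of $1$s precise. By the definition of $\ins_i(p/q)$, the length of $\ins_i$ is defined to be the length of the $i^{\text{th}}$ maximal run of consecutive $1$s in the bounce sequence, where these runs are the blocks of $1$s sitting between successive $0$s. Since there are exactly $2p$ zeros, the sequence decomposes into exactly $2p$ such (possibly empty) runs, matching the range $i=1,\ldots,2p$; this is why the definition of $\ins_i$ is well-posed. Summing the lengths of these runs simply recovers the total number of $1$s, so
\[
\sum_{i=1}^{2p} \operatorname{length}\big(\ins_i(p/q)\big) = \#\{\text{1s in the bounce sequence}\} = 2q.
\]

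I do not expect a genuine obstacle here: the content of the lemma is entirely combinatorial and the two ingredients—the letter count $2p$ zeros / $2q$ ones, and the run-decomposition—are essentially restatements of the definitions and of the cited facts about cutting sequences on the torus (see \cite{flatsurfaces}). The only point requiring mild care is the edge effect from the cyclic/doubling convention: one must confirm that doubling and the chosen starting point (always beginning with $0$) do not merge or split any run of $1$s in a way that changes the total count, which it cannot, since doubling only repeats the pattern and the total number of each symbol is additive. Thus the sum is unaffected by how the runs are apportioned among the $\ins_i$, and the total length is $2q$.
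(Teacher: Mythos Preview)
Your proof is correct and follows exactly the same approach as the paper: count the $1$s in the square-table bounce sequence (there are $2q$ of them) and observe that, by definition, the lengths of the $\ins_i$ are the lengths of the runs of $1$s, so their sum is the total number of $1$s. The paper's proof is just a two-sentence version of what you wrote; your additional justification of the $2p$/$2q$ letter count and the remark on the doubling convention are fine elaborations but not needed beyond what the paper already records before the lemma.
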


  \begin{proof}
The total number of $1$s in the bounce sequence on the square table for a trajectory of slope $p/q$ is $2q$, and the lengths of the strings of 1s give the lengths of the insertions, giving the result.
  \end{proof}

  \begin{theorem}[Detecting rational angles]\label{thm:hearangles}
Suppose that edges $A$ and $B$ are adjacent in $P$ and meet in a rational angle. Then the rational angle is $p/q \cdot \pi$ in lowest terms if and only if there exist grazing sequences $(E^1_i)_{i\in\Z}, \ldots, (E^{p}_i)_{i\in\Z}  \in \overline{\B(P)}$ such that the $2p$ words
\begin{align}
E^1_{-n}\  \ldots E^1_{-2}\  E^1_{-1}\  &\overbrace{B A \ldots}^{\ins_1(p/q)}\ E^1_1\  E^1_2\  \ldots\  E^1_n \label{gtau1} \\
E^1_n\  \ldots E^1_{2}\  E^1_{1}\  &\ins_2(p/q)\  \ \ E^2_{-1}\  E^2_{-2}\  \ldots\  E^2_{-n} \label{gtau2}\\
E^2_{-n}\  \ldots E^2_{-2}\  E^2_{-1}\  \ \ \  &\ins_3(p/q)\  E^2_1\  E^2_2\  \ldots\  E^2_n \label{gtau3}\\
&\vdots \qquad\qquad\qquad \vdots \nonumber \\
E^{p-1}_n\  \ldots E^{p-1}_{2}\  E^{p-1}_{1}\  &\ins_{2p-2}(p/q)\ E^p_{-1}\  E^p_{-2}\  \ldots\  E^p_{-n} \label{gtau4}\\
E^p_{-n}\  \ldots E^p_{-2}\  E^p_{-1}\  &\ins_{2p-1}(p/q)\  E^p_1\  E^p_2\  \ldots\  E^p_n\label{gtau5} \\
E^p_n\  \ldots E^p_{2}\  E^p_{1}\ \ &\ins_{2p}(p/q)\ \ \ \ E^1_{-1}\  E^1_{-2}\  \ldots\  E^1_{-n} \label{gtau6}
\end{align} 
are in $\L_P$ for all $n\in\N$.
  \end{theorem}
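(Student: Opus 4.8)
The plan is to generalize the constructive argument from Proposition~\ref{prop:hear_3pi/2_gen} (the case $p/q = 3/2$, i.e. angle $3\pi/2$) to an arbitrary rational angle $\pi\cdot p/q$ in lowest terms. The central geometric fact is that unfolding the polygon around the vertex where $A$ and $B$ meet, repeatedly reflecting across the alternating edges $B, A, B, A, \ldots$, returns to the original orientation after exactly $2q$ reflections, producing a total cone angle of $2q \cdot (\pi p / q) = 2p\pi$. The trajectories we use to probe this cone angle are parallel grazing trajectories through the vertex, and the combinatorics of which edges ($A$ or $B$) each successive grazing trajectory crosses as we spiral around the vertex is governed precisely by a billiard trajectory of slope $p/q$ on the square; this is the role of the square-table bounce sequence and the insertion strings $\ins_i(p/q)$ defined above.

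For the forward direction, I would assume the angle is $\pi\cdot p/q$ and explicitly construct the $2p$ grazing sequences. First I would set up the unfolding: starting from the configuration with $A$ horizontal and $P$ above it (meeting $B$ at the left endpoint), unfold successively across $B_1, A_2, B_3, \ldots$ until returning to the starting orientation after $2q$ copies. I would then construct the trajectories $\tau^1, \ldots, \tau^p$ (the ideal grazing trajectories through $p$) together with nearby parallel trajectories $\tau_1, \ldots, \tau_{2p}$ alternating above and below, exactly as in the $3\pi/2$ proof, where each $\tau^k$ contributes a straight ideal trajectory and hence $\pi$ worth of angle on each side. The crossings of these trajectories with the unfolded copies of $A$ and $B$ are read off from the square-table trajectory of slope $p/q$: each maximal string of $1$s in the square bounce sequence corresponds to a block of alternating $A$s and $B$s (an insertion $\ins_i(p/q)$) appearing in the middle of the $i$-th word, and the single $0$s correspond to advancing to the next grazing trajectory. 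By Lemma~\ref{lem:collinear_limiting_points} and the grazing machinery of \S\ref{sec:adjacency}, approximating each $\tau^k$ by nonsingular trajectories shows that all the required words \eqref{gtau1}--\eqref{gtau6} lie in $\L_P$ for every $n$.

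For the converse, I would assume the $2p$ words are in $\L_P$ for all $n$ and recover the angle. Each grazing sequence, approximated by nonsingular trajectories, yields via Lemma~\ref{lem:ideal_uniqueness} a unique straight ideal trajectory subtending an angle of $\pi$ on each side of the vertex, so the $2p$ ideal trajectories contribute a total of $2p\pi$ of angle around the unfolded vertex $p$. By Lemma~\ref{lem:2q}, the insertions contain exactly $2q$ total letters $A$ and $B$, which count the number of unfolded copies of the table meeting at $p$ (equivalently, the number of reflections across $A$ or $B$ needed to close up the unfolding). Dividing the total cone angle by the number of copies gives the interior angle $2p\pi / 2q = \pi\cdot p/q$. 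The fact that $p/q$ is in lowest terms corresponds exactly to the unfolding closing up for the first time after $2q$ reflections, which matches the primitivity of the slope-$p/q$ trajectory on the square (one period has $2p$ zeros and $2q$ ones).

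The main obstacle I expect is bookkeeping the correspondence between the square-table trajectory and the spiral of grazing trajectories: specifically, verifying that the alternation convention (each insertion starting with the letter the previous one did not end with, indices taken modulo $2p$) correctly matches the geometric alternation of edges $A$ and $B$ as one winds around the vertex, and that the cyclic normalization (starting the square sequence with $0$) aligns with starting the first insertion with $B$. This is the place where the clean combinatorial statement about insertions must be reconciled with the actual geometry of which edge each parallel trajectory crosses; the analytic input (collinearity of limit points, uniqueness of ideal trajectories, the angle-convergence of Corollary~\ref{cor:realizability}) is already available from \S\ref{sec:adjacency}, so the genuinely new work is this symbolic-geometric dictionary, for which the worked examples of slopes $3/2$ and $5/12$ serve as the guiding templates.
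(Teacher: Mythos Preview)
Your proposal is correct and follows essentially the same approach as the paper: both directions proceed by unfolding the corner to a cone of total angle $2p\pi$ tiled by $2q$ reflected copies, using $2p$ parallel approximating trajectories (alternating above/below the vertex) to produce the words, and invoking the slope-$p/q$ square-billiard sequence to govern the insertion lengths; for the converse, both count $2p\pi$ of cone angle and $2q$ unfolding edges and divide. One small imprecision: you write ``the $2p$ ideal trajectories,'' but there are only $p$ ideal grazing trajectories $\tau^1,\ldots,\tau^p$ (each a line through the vertex contributing $\pi$ on each side); the $2p$ objects are the approximating trajectories $\tau_1,\ldots,\tau_{2p}$, and the paper's converse uses Corollary~\ref{cor:realizability} to show these are pairwise disjoint from one another's unfolding edges---this is the step that makes the $2q$ insertions correspond bijectively to unfolded copies, and you should not skip it by citing Lemma~\ref{lem:2q} alone.
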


The sequences $(E_i)$ are the essential tool that we use for determining sizes of angles from information in $\B(P)$. We call them \emph{matching sequences}.

\begin{definition}
The sequences $S_1, S_2, \ldots, S_{\ell} \in \overline{\B(P)}$ form a collection of \emph{matching sequences} for $\{A, B\}$ if there exists $k> 0$ such that
\begin{itemize}
\item each sequence $S_i$ has a string of alternating $A$s and $B$s of length $k-1$ or $k$, and
\item for each $n\in\N$, the finite string of $n$ symbols of $S_i$ after its string of $A$s and $B$s matches (in the reverse order) the $n$ symbols of $S_{i+1}$ before its string of $A$s and $B$s, for \mbox{$i=1, \ldots, \ell-1$}.
\end{itemize}
\end{definition}

For example, the sequences \eqref{tau1}-\eqref{tau6} are $\ell=6$ matching sequences with $k=1$.

In Theorem \ref{thm:hearangles} we use $2p$ matching sequences, with a total of $2q$ inserted $A$s and $B$s (there are $q$ of each, since they alternate), so Theorem \ref{thm:hearangles} says that the angle of the vertex is
  \[
\pi\cdot \frac{p}{q} = \pi \cdot \frac{2p}{2q} = \pi \cdot \frac{\text{$\#$ of matching sequences}}{\text{total length of inserted strings of $A$s and $B$s}}.
  \]
  For example, in the case of Proposition \ref{prop:hear_3pi/2_gen} when $p/q = 3/2$, there are a total of 6 sequences with 4 $A$s and $B$s, so the angle is $6\pi/4 = 3\pi/2$.

  Also note that it is possible for the number of inserted $A$s and $B$s to be $0$. For example, in this case where $p/q = 3/2$, we have strings of $A$, $B$, and (nothing) inserted between $E_+$ and $E_-$.

  \begin{proof}[Proof of Theorem \ref{thm:hearangles}]
Suppose first that $A$ and $B$ meet in an angle of $\pi \cdot p/q$. We will show that the $2p$ words \mbox{\eqref{gtau1}--\eqref{gtau6}} are in $\L_P$, for all $n$.

Consider a line segment $\tau_1$ making a tiny angle $\theta$ with the positive horizontal, passing just above the vertex. By construction, if $\pi \cdot p/q \leq \pi$, the segment passes through $B$. If we unfold the table around the vertex across edge $B$ and then edge $A, B,$ etc., the line segment cuts through nearly $\pi$ worth of angle. So, starting with $B$, it passes through $B, A, B, \ldots$, where the number of edges it cuts through is $\lfloor\pi/(\pi\cdot p/q)\rfloor = \lfloor q/p \rfloor$. Extending the line segment and approaching the vertex yields the bounce words \eqref{gtau1} for increasingly large values of $n$.

As in Proposition \ref{prop:hear_3pi/2_gen}, we completely unfold the table around the vertex. Since the angle at the vertex is $\pi \cdot p/q$, the complete unfolding requires $2q$ copies of the table, for a total cone angle of $2p\pi$. We construct a family of $2p$ parallel finite trajectories $\tau_1, \ldots, \tau_{2p}$, with the odd-numbered trajectories above the vertex and the even-numbered trajectories below, as in Proposition \ref{prop:hear_3pi/2_gen} and in Figures \ref{fig:3pi2-unfold} and \ref{fig:5pi12}.

The tail of the sequence corresponding to $\tau_1$ matches the head of the sequence corresponding to $\tau_2$, the tail of the sequence corresponding to $\tau_2$ matches the head of the sequence corresponding to $\tau_3$, and so on, until the tail of the sequence corresponding to $\tau_{2p}$ matches the head of the sequence corresponding to $\tau_1$. As the distance from each $\tau_i$ to the vertex goes to $0$, the tail of $\tau_i$ and the head of $\tau_{i+1}$ (considering indices modulo $2p$) agree for more and more letters. This argument yields the ``$E$'' parts (the heads and tails) of the sequences \mbox{\eqref{gtau1}--\eqref{gtau6}}.

The rest of the proof explains the number of inserted $A$s and $B$s in the ``middle'' of each sequence.

First, we show that each string starts with the letter that the previous string did \emph{not} end with. Consider the line segment $\tau_2$ that is parallel to $\tau_1$ and below the vertex. Since $\tau_1$ and $\tau_2$ are parallel and on opposite sides of the vertex, and $A$ and $B$ and all of their copies emanate from the vertex, $\tau_1$ and $\tau_2$ do not cross any of the same edges. In particular, if the last edge that $\tau_1$ crossed was (a copy of) $B$, then the first edge that $\tau_2$ crosses will be a copy of $A$. This holds for any $\tau_i$ and $\tau_{i+1}$ constructed in this manner, so in each case, the first $A$ or $B$ in the ``middle'' of the bounce sequence corresponding to such a line segment must be the letter with which the previous middle bounce sequence did \emph{not} end.

We now determine how many edges $A$ and $B$ each trajectory $\tau_i$ crosses. We are essentially seeing how many angles of $\pi\cdot p/q$ fit into an angle of $\pi$, and then seeing how many fit into the next $\pi$, and the next. This is equivalent to seeing how many copies of $p/q$ fit into 1 (and the next copy of 1, and the next). This, in turn, is equivalent to considering a line of slope $p/q$ on a square grid that starts a tiny distance $\epsilon$ to the right of the origin, and considering how many vertical line segments it intersects before intersecting the next horizontal line, and then how many vertical line segments before the next horizontal line, and the next (see Figure \ref{fig:grid}).

The number of vertical lines crossed between each horizontal line crossing corresponds exactly to the strings of 1s in the cutting sequence for a trajectory of slope $p/q$ on a square grid. We need two full periods of the cutting sequence because one period gives us an angle that is an integer multiple of $\pi$, and we need to go around a number of times that is a multiple of $2\pi$.

Conveniently, one period of the bounce sequence for the square billiard table is exactly two periods of the cutting sequence for the square grid (\cite{flatsurfaces}, Proposition 4.1). This completes the proof of the forward direction. \\

Now, suppose that the $2p$ grazing sequences \eqref{gtau1} -- \eqref{gtau6} are in $L_P$ for all $n\in\N$. We will show that:
\begin{enumerate}
\item The total angle around the unfolding of the vertex is $2p\pi$, and
\item The total number of edges that were unfolded to get the unfolding is $2q$.
\end{enumerate}
It follows from these claims that the $2p\pi$ cone angle about the vertex is tiled by $2q$ reflected copies of the table, all meeting at the same corner. Therefore the angle subtended by the corner is $2p\pi/2q = \pi \cdot p/q$, which is the angle at the vertex, as desired. We now prove each claim:

\begin{enumerate}
\item As the word \eqref{gtau1} is in $\L_P$ for each $n$, there is $\pi$ worth of angle near the vertex. Since the limit sequence of the words in \eqref{gtau1} shares its first half with the limit sequence of the words in \eqref{gtau2}, this adds an additional $\pi$ worth of angle, just as in the end of the proof of Proposition \ref{prop:3pi/2gen}.

Each additional sequence does the same, wrapping around and around the vertex. Since the last ($2p^{\text{th}}$) sequence shares its second half with the first half of the $1^{\text{st}}$ sequence, the unfolding is done after unfolding $2p$ copies of the corner, and we have a total of $2p\cdot \pi$ angle around the vertex, as desired.

\item Each time a billiard trajectory crosses an edge, we unfold across that edge so that the trajectory does not bounce, but instead is an unfolded line (segment). We call such edges {\em unfolding edges}.

  Let $\sigma_1, \ldots, \sigma_{2p}$ be the ideal trajectories realizing the bounce sequences corresponding to \eqref{gtau1}--\eqref{gtau6}. We claim that if $\tau_i$ is a trajectory approximating $\sigma_i$, then for all $i \neq j$, $\tau_i$ and $\tau_j$ will not cross any of the same unfolding edges.

  To see this, observe that $\sigma_i$ are all parallel: since the head of $\sigma_{i}$ matches the tail of $\sigma_{i+1}$, by Corollary \ref{cor:realizability} we see that they must lie at the same angle $\theta$ (up to choice of orientation for the trajectory). In particular, another application of Corollary \ref{cor:realizability} implies that for better and better approximations $\tau_i$ of $\sigma_i$ and $\tau_j$ of $\sigma_j$, all $\tau_i$ and $\tau_j$ are increasingly close to parallel. Therefore if $\tau_i$ and $\tau_j$ cross the same unfolding edge their heads/tails must match, so $j = i \pm 1$. Without loss of generality, assume $j=i+1$.

  Now by our definition of $\ins_i(p/q)$ the edges $A$ and $B$ alternate, so in particular $\tau_i$ and $\tau_{i+1}$ approximate the head of $\sigma_i$ from different sides. But then this implies that all of the unfolding edges that $\tau_i$ crosses emanate from $p$ at some angle in $(\theta, \theta + \pi)$, while all of the unfolding edges that $\tau_{i+1}$ crosses lie in the complementary sector.

  Therefore each $A$ and $B$ appearing in some $\ins_i(p/q)$ corresponds to an unfolding edge. By Lemma \ref{lem:2q}, the total number of inserted $A$s and $B$s in \eqref{gtau1} -- \eqref{gtau6}  is $2q$, so $2q$ edges were unfolded, as desired.

\end{enumerate}
This completes the proof.\end{proof}

\begin{theorem}[Detecting arbitrary angles]\label{thm:irr}
The angle between adjacent edges $A$ and $B$ is the limit, as the number of matching sequences goes to infinity, of
\begin{equation}\label{eq:irr}
\pi \cdot \frac{\text{number of matching sequences}}{\text{total length of inserted $A$s and $B$s}} \ = \ \frac{\pi}{\text{average length of insertion}}.
\end{equation}
\end{theorem}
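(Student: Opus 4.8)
The plan is to reduce the general case to the combinatorial counting already carried out for rational angles in Theorem \ref{thm:hearangles}, and then pass to a limit. Let $\theta$ denote the (possibly irrational, possibly reflex) interior angle at the vertex $p$ where $A$ and $B$ meet. As in the conventions of this section, I would unfold the table repeatedly around $p$; since each reflected copy contributes angle $\theta$, the edges emanating from $p$ in the development appear at angles $0, \theta, 2\theta, 3\theta, \ldots$, alternating between copies of $A$ and of $B$. The crucial point is that these edge angles are determined purely by $\theta$, whether or not $\theta$ is a rational multiple of $\pi$.

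First I would construct, for each $m$, a chain of $m$ matching sequences by the same winding argument as in Theorem \ref{thm:hearangles}, now \emph{without} requiring the development to close up. Take grazing ideal trajectories $\sigma_1, \ldots, \sigma_m$ that wind around $p$, each a straight line through $p$ in the development and each subtending $\pi$ worth of cone angle at $p$, with $\sigma_{i+1}$ picking up where $\sigma_i$ leaves off. Each $\sigma_i$ lies in $\overline{\B(P)}$ as a limit of nonsingular trajectories (the grazing theory of \S\ref{sec:adjacency}), and the matching/collinearity condition forces all the $\sigma_i$ to lie at a common angle by Corollary \ref{cor:realizability}, so the chain genuinely covers a cone of total angle $m\pi$ around $p$. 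The inserted string of alternating $A$s and $B$s in $S_i$ records exactly the copies of $A$ and $B$ that an approximating nonsingular trajectory crosses as it sweeps past $p$, namely the edges emanating from $p$ inside the $\pi$-sector swept out by $\sigma_i$.

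The heart of the argument is then a counting statement: the total length $L_m$ of all the insertions equals the number of edges (copies of $A$ and $B$) emanating from $p$ inside a cone of total angle $m\pi$, which is the number of positive integer multiples $k\theta$ lying below $m\pi$. Hence
\[
L_m = \left\lfloor \frac{m\pi}{\theta}\right\rfloor + O(1),
\]
where the bounded error absorbs the at-most-one edge of fencepost discrepancy per trajectory, coming from whether the approximating trajectory passes just above or just below $p$. Equivalently, the individual insertion lengths are all equal to $\lfloor \pi/\theta\rfloor$ or $\lceil \pi/\theta\rceil$, and the sequence of insertion lengths is Sturmian with slope $\pi/\theta$, so its Cesàro average converges to $\pi/\theta$. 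In either formulation we obtain
\[
\pi \cdot \frac{m}{L_m} = \frac{\pi}{L_m/m} \longrightarrow \frac{\pi}{\pi/\theta} = \theta \qquad\text{as } m \to \infty,
\]
which is exactly the claimed limit. As a sanity check, for $\theta = \pi p/q$ in lowest terms one recovers $L_{2p} = 2q$ and the exact equality of Theorem \ref{thm:hearangles}; in particular the reflex case $\theta > \pi$ (where $\lfloor \pi/\theta\rfloor = 0$ and some insertions are empty) is handled uniformly.

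The step I expect to be the main obstacle is making the error term $O(1)$ genuinely uniform in $m$, that is, showing that the fencepost discrepancy does not accumulate as the trajectories wind many times around $p$. This requires verifying that at each stage the approximating nonsingular trajectory can be chosen close enough to $\sigma_i$ that it crosses precisely the edges in the open $\pi$-sector and no others, so that the only ambiguity is the single boundary edge shared between consecutive sectors. Once this uniform bound is established, division by $L_m \sim m\pi/\theta \to \infty$ annihilates the error and the limit is exactly $\theta$.
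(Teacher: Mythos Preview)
Your proposal is correct and follows the same idea as the paper: the average insertion length is the number of copies of the vertex angle $\theta$ that fit into a $\pi$-sector, namely $\pi/\theta$, so the limit in \eqref{eq:irr} equals $\theta$. The paper's own proof is a single sentence stating exactly this, so your explicit error analysis via $L_m = \lfloor m\pi/\theta\rfloor + O(1)$, the Sturmian description of the insertion lengths, and your fencepost concern all go well beyond what the paper actually supplies.
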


\begin{proof}
The average length of a string of $A$s and $B$s is the average number of times a line close to the vertex cuts through unfolded copies of edges $A$ and $B$, which is the same as the number of times the vertex angle fits into $\pi$.
\end{proof}

Note that if the angle is rational, $\pi\cdot p/q$, the list of matching sequences will be periodic with period $2p$, and the calculation \eqref{eq:irr} reduces to $\frac{2p}{2q}\pi$ as in Theorem \ref{thm:hearangles}.

\begin{example}
Suppose that we have found matching bounce sequences
\begin{align*}
(\ldots E^1_{-2},\  E^1_{-1}, B, A, B,\ E^1_1,\  E^1_2,\  \ldots,) &   \\
(\ldots E^1_{2},\  E^1_{1},\ \ \ A, B,\ \  E^2_{-1},\  E^2_{-2},\  \ldots). &   
\end{align*}
in $\overline{\B(P)}$. Then the total length of inserted $A$s and $B$s is 5, in a total of 2 sequences, so the angle at the vertex is approximately $2\pi/5 = 72^\circ$.

For an idea of how much information is given by each additional sequence, consider the following two possibilities for the string length in a third sequence.

If the third sequence has $A, B, A$ as its string, our new angle approximation is $3\pi/8\approx 67.5^{\circ}.$

If the third sequence has $A, B$ as its string, our new angle approximation is $3\pi/7 \approx 77^{\circ}.$\\

For an example of the rational case, the ten sequences in \eqref{eq:5pi12} have an average string length of $2.4$, so the angle at the vertex is $\pi/2.4 = 5\pi/12 = 75^{\circ}$.
\end{example}

\subsection{Detecting arbitrary angles}\label{subsec:oracle}
Now we will describe the process of reconstructing the angles of an arbitrary polygon $P$ from its bounce spectrum $\B(P)$. We emphasize that while the information in $\B(P)$ suffices to measure the sizes of the angles of $P$, this measurement is not carried out in an effective way (e.g. in finite time using a computer with finite storage).

\begin{theorem}\label{thm:hearing_angles}
The angles of a polygonal billiard table $P$ can be reconstructed from $\B(P)$.
\end{theorem}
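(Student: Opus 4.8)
The plan is to assemble the angle-detection machinery of this section into a single reconstruction procedure, applied pair by pair to the edges of $P$. The key preliminary observation is that every combinatorial object needed is already encoded in $\B(P)$: by Corollary \ref{coro:full_determines_forward}, $\B(P)$ determines $\B_+(P)$, and passing to closures it therefore determines $\overline{\B(P)}$ and $\overline{\B_+(P)}$, as well as the bounce language $\L_P$. Since the notions of \emph{grazing sequence} and \emph{matching sequence} were defined purely in terms of membership in these sets, whether a given collection forms a valid family of matching sequences for a pair $\{A,B\}$ is a condition that can be checked from $\B(P)$ alone.

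First I would reconstruct the adjacency graph of $P$ using Theorem \ref{thm:adj} (equivalently Theorem \ref{thm:nonconvex_adjacent}): for each pair of labels $A, B \in \A$ we can decide from $\B(P)$ whether the corresponding edges are adjacent. This identifies exactly which pairs of edges bound a vertex, and hence which pairs we must measure an angle between. Then, for each adjacent pair $\{A,B\}$, I would compute the interior angle by the limiting procedure of Theorem \ref{thm:irr}: search within $\overline{\B(P)}$ for collections of matching sequences for $\{A,B\}$ of increasing length, record for each the total length of the inserted strings of alternating $A$s and $B$s, and form the ratio
\[
\pi \cdot \frac{\text{number of matching sequences}}{\text{total length of inserted $A$s and $B$s}}.
\]
By Theorem \ref{thm:irr} this ratio converges to the angle between $A$ and $B$ as the number of matching sequences grows; when that angle is rational the family of matching sequences is eventually periodic (of period $2p$) and the ratio stabilizes at the exact value $\pi p/q$ supplied by Theorem \ref{thm:hearangles}. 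Carrying this out over all adjacent pairs reconstructs every angle of $P$, which proves the theorem. I would also remark that in the right-angled case, where by Proposition \ref{prop:rectilinear} the table can only be recovered up to edge-parallel stretching, the angles themselves are nonetheless pinned down: an angle of $\pi/2$ is flagged by Proposition \ref{prop:hear_pi/2} and one of $3\pi/2$ by Proposition \ref{prop:hear_3pi/2_gen}, both special cases of the general scheme.

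The hard part is not the assembly but the underlying verification, which is precisely the content of Theorems \ref{thm:hearangles} and \ref{thm:irr}, that the combinatorial quantities we read off faithfully reflect the local geometry at the vertex. The crucial points are that for every adjacent pair one can always locate matching sequences of arbitrary length whose approximating trajectories wind around the shared vertex; that by Corollary \ref{cor:realizability} distinct trajectories in such a family are asymptotically parallel and so cross disjoint sets of unfolding edges, ensuring that each inserted $A$ or $B$ corresponds to exactly one reflected copy of the table; and that the insertion counts therefore record the $2q$ reflected copies filling the $2p\pi$ of cone angle, so that no spurious combinatorial family competes with the correct value. Granting these facts, established above, the reconstruction procedure is complete.
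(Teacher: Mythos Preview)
Your proposal is correct and takes essentially the same approach as the paper: first recover adjacency via Theorem \ref{thm:adj}, then for each adjacent pair invoke the matching-sequence machinery of Theorems \ref{thm:hearangles} and \ref{thm:irr} to read off the angle. The paper's proof is written a bit more procedurally (describing how one would actually search through $\L_P$ for matching heads and tails), while you frame the same argument more abstractly and add the helpful explicit remark, via Corollary \ref{coro:full_determines_forward}, that all the auxiliary objects ($\B_+(P)$, the closures, grazing sequences) are already determined by $\B(P)$; but the underlying logic is identical.
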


\begin{proof}
We will show that we can reconstruct the angles of a polygonal table $P$, given $\B(P)$. We first apply Theorem \ref{thm:adj} to determine the cyclic order of the edges of $P$.

We choose two adjacent edges $A$ and $B$ in order to determine the angle between them. We could immediately appeal to Theorem \ref{thm:hearangles} to determine the angle in the case that it is rational, but we will sketch a somewhat constructive proof to give an idea of how we would actually determine the angle.

We will attempt to find a set of matching sequences for $\{A,B\}$. We find the longest string of the form $ABAB\ldots$ or $BABA\ldots$ of alternating $A$s and $B$s that occurs in any bounce sequence, and set aside for future use all of the sequences that have such a string of this length (call it length $k$), plus the ones that have a string of alternating $A$s and $B$s of length $k-1$. Call this set of potential matching sequences $S$.

We choose one of the sequences in $S$ with a string of alternating $A$s and $B$s of length $k$, call it $\tau_1$, and a desired depth $n$ to search the head and tail of the sequences. We look at the $n$-head $E_+$ and $n$-tail $E_-$ of the sequence before and after the string of $A$s and $B$s. We wish to find a sequence ($\tau_{2p}$, the last sequence) matching its $n$-tail, as well as another sequence ($\tau_{2}$) matching its $n$-head. We search through the other sequences in $S$, and see if any of them have $n$-tail $E_+$ or $n$-head $E_-$, starting with the correct letter ($A$ for the first if the string in $\tau_1$ ended with $B$, and vice versa; $A$ for the second if the string in $\tau_1$ began with $B$, and vice versa). If so, they are (tentatively) $\tau_2$ and $\tau_{2p}$, respectively.

We then take the $n$-head of $\tau_2$ and see if we can find any sequence in $S$ with this $n$-tail, again starting its string of $A$s and $B$s with the letter that is different from what the string in $\tau_2$ ended with. If so, we call it $\tau_3$. Similarly, we take the $n$-tail of $\tau_{2p}$ and see if we can find any sequence in $S$ with this as its $n$-head, ending its string of $A$s and $B$s with the letter that is different from what the string in $\tau_{2p}$ began with. We continue in this way.

If eventually the two ends meet, with $\tau_{m} = \tau_{2p-m}$ for some $p$,
we stop. Then we count up the total number of $A$s and $B$s that arose in the middle of the sequences, and check that this is an even number, which we call $2q$. Then our best approximation of the the angle at the vertex is currently $\pi \cdot p/q$. (If not, then we must keep going, until we have an even number $2p$ of sequences and an even number $2q$ of $A$s and $B$s.)

We then repeat this process of matching $n$-heads and $n$-tails for larger and larger values of $n$, and also starting with other sequences that have strings of $k$ $A$s and $B$s. If there is some $N$ such that, for all $n>N$, the number of sequences and the total number of $A$s and $B$s are fixed at some $2p$ and $2q$, respectively, then we conclude that the angle is rational, with angle $\pi\cdot 2p/2q = \pi \cdot p/q$.

On the other hand, if increasing the value of $n$ increases the number of sequences $2p$ and the total number of $A$s and $B$s $2q$ without bound, then the angle is irrational.

By Theorem \ref{thm:irr}, we can get an approximate value for the angle $\theta$ by computing
$$\theta \approx \pi \cdot \frac{\text{number of matching sequences}}{\text{total length of insertions}}.$$
In this way we may compute the value of $\theta$ to an arbitrary degree of precision by carrying out the matching process sufficiently far. This is of course the best that we can ever do, to specify a generic irrational number. Therefore we have shown that the angles of $P$ can be recovered from $\B(P)$.
\end{proof}

\begin{corollary}\label{cor:triangle}
You can reconstruct the shape of a triangle $P$ from its bounce spectrum $\B(P)$.
\end{corollary}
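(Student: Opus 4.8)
The plan is to obtain this as an immediate consequence of Theorem~\ref{thm:hearing_angles} together with the classical fact that a triangle is determined up to similarity by its angles. First I would apply Theorem~\ref{thm:adj} to $\B(P)$ to recover the adjacency structure of $P$; in the case of a triangle this merely confirms the combinatorial type, namely that $P$ has three edges, every pair of which is adjacent, meeting in three vertices. With the three adjacent pairs identified, I would then apply Theorem~\ref{thm:hearing_angles} to each pair to recover the three interior angles $\alpha$, $\beta$, $\gamma$. In fact, since $\alpha + \beta + \gamma = \pi$, recovering any two of the angles already determines the third, so this step is even cheaper than in the general polygonal case.

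Having the angles in hand, I would invoke the AAA similarity criterion: any two triangles with the same interior angles are related by a similarity, that is, they have the same shape. Thus the angle data extracted from $\B(P)$ pins down $P$ up to similarity. Here the word \emph{shape} must of course be read as \emph{up to similarity}, since scaling $P$ leaves $\B(P)$ unchanged and so the overall scale can never be recovered from the bounce spectrum alone; with this understood, the reconstruction is complete.

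The only point requiring a brief remark is that the right-angled ambiguity of Proposition~\ref{prop:rectilinear} and its Corollary cannot intervene. A table is right-angled in the relevant sense only when \emph{all} of its angles are $\pi/2$ or $3\pi/2$, whereas the three angles of a triangle sum to $\pi$, so at most one of them can be $\pi/2$ and none can be $3\pi/2$. Hence no triangle is right-angled in the sense that produces the edge-parallel stretching obstruction, and there is no residual affine ambiguity to resolve. I do not anticipate any genuine obstacle here: once Theorems~\ref{thm:adj} and~\ref{thm:hearing_angles} are available, the corollary follows directly from elementary Euclidean geometry.
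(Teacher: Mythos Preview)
Your proposal is correct and follows essentially the same approach as the paper's proof, which simply notes that a triangle is determined up to similarity by its angles and invokes Theorem~\ref{thm:hearing_angles}. Your additional remarks about Theorem~\ref{thm:adj}, the angle-sum relation, and the non-applicability of the right-angled ambiguity are all sound elaborations, but the paper's own argument is the one-line version of exactly this reasoning.
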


\begin{proof}
A triangle is determined (up to similarity) by its angles, and by Theorem \ref{thm:hearing_angles}, you can reconstruct the angles of a billiard table from $\B(P)$.
\end{proof}

\section{The impossibility of reconstruction from finite information }\label{sec:no_finite}
\newcommand{\Pn}{\mathcal{P}_n}

In \S \ref{sec:angles}, we used an infinite amount of information from $\B(P)$ to measure the angles of a billiard table. We needed to check, for example, that for all $n\in\N$, a word of the form
\[E^1_{-n}\  \ldots E^1_{-2}\  E^1_{-1}\ B A \ E^1_1\  E^1_2\  \ldots\  E^1_n\]
existed in the bounce language.

One ambitious goal would be to try to reconstruct a table from just a finite number of finite words in $\B(P)$. This section shows that any such attempt is doomed to failure. The main idea is that in trying to reconstruct the shape of a billiard table, we start with the whole moduli space of polygons as possible candidates. We then use information from $\B(P)$ to narrow down the options of which polygon these pieces of bounce spectrum information came from, until a unique polygon (or some limited collection) remains.

One should think of each finite piece of information as restricting our set of candidate tables, and we will show that a finite total amount of information specifies only full-dimensional subset in the moduli space. Every polygon in that region has the same finite amount of information in common, and so this amount of information is insufficient to distinguish them.

In \S \ref{subsec:moduli}, we give some preliminary setup about the finite reconstruction problem and define the moduli space of polygons. In \S\ref{subsec:finitely_finite}, we prove as Theorem \ref{thm:no_finite} that it is impossible to recover the shape of a billiard table from any finite collection of finite words. The idea is that any finite bounce word persists under sufficiently small perturbations of a polygon.

\subsection{Preliminaries and the moduli space of polygons}\label{subsec:moduli}

We begin by offering two different versions of the task of determining the shape of a table from a finite number of finite words.

In the first version, we are handed some finite collection $W$ of finite words that belong to some $\L_P$ and are asked whether we are able to reconstruct the shape of $P$. We are given no further information about where $W$ came from or how it was chosen.

In this first version of the problem, we are definitely out of luck. No matter what words we receive in $W$, there are some very basic properties of $P$ that we are unable to determine. For instance, we do not even know how many sides $P$ has; the pieces of the trajectories in $P$ that contributed words to $W$ may simply have missed some number of edges. Furthermore, there will be many tables that can realize all of the words in $W$. A polygon resulting from any modifications of $P$ that do not interact with any of the pieces of trajectories that contributed to $W$ --- like adding tiny indentations or outcroppings --- will also realize all of the words in $W$. So we see right away that this version of the problem is intractable.

In the second version of the problem, we do something of a role reversal. We have all of $\B(P)$ in front of us and we wish to carefully choose a rich finite collection of bounce words for $P$, so that we could hand it over to someone else, who could then reconstruct $P$. We will show that this task, too, is hopeless, even though it might seem more promising. For instance, we would definitely be sure to include every edge label of $P$ in some word in $W$, so our partner could be confident about how many edges $P$ has. We will show that, no matter which words we choose, no finite collection $W$ is sufficiently rich to precisely specify $P$.

In order to prove this stronger impossibility result, it is helpful to think about the moduli space $\mathcal{P}_n$ of polygons with $n$ edges. We may introduce convenient local coordinates on $\mathcal{P}_n$ as follows. Let $P(z_1, \ldots, z_n)$ denote the polygon with vertices $z_i$ in $\C$ and ordered cyclically, so that edges of the polygon connect $z_i$ and $z_{i+1}$ with indices taken mod $n$. Since we are considering polygons only up to scaling and rotation, we may specify that all of our polygons have $z_1=0$ and $z_2=1$. Moreover, by reflecting over the real axis if necessary, we may also specify that $\text{Im}(z_3) \ge 0$.

For a pair of points $a$ and $b$ in $\C$, the Euclidean distance between them is $|a-b|$. More generally, let $|A-B|$ equal the infimum of $|a-b|$, where $a\in A$ and $b\in B$. For any fixed polygon $P$, we may consider the family of polygons that are ``near" to $P$.

\begin{definition}
\label{def:local_chart}
For a polygon $P=P(0,1, \ldots, z_n)$ and a fixed $\varepsilon>0$, the set of polygons
\[P_\varepsilon=\{P(0,1, \ldots, t_n): |t_i-z_i|<\varepsilon, \forall i\}\]
is an $\varepsilon$--neighborhood of $P$ in $\mathcal{P}_n$.
\end{definition}

Since edges do not come into the definition of an $\varepsilon$--neighborhood, we can extend this definition to a notion of a $\varepsilon$--neighborhood for any point set in $\C$, regardless of whether these points are the vertices of a polygon.

Observe that the $\varepsilon$--neighborhoods defined above are full-dimensional subsets of $\mathcal{P}_n$.

\subsection{Finitely many finite words cannot uniquely determine a polygon}\label{subsec:finitely_finite}

We are now ready to show that no finite collection of finite bounce words determines the shape of a polygon. We first prove a lemma showing that any single finite word in insufficient. The extension of this to finite collections is proven in Theorem \ref{thm:no_finite}.

\begin{lemma}
\label{lem:words_persist}
Let $P$ be a polygon and $w$ a finite word in its bounce language $\L_P$. Then there exists $\varepsilon>0$ such that every polygon in $P_\varepsilon$ also has $w$ in its bounce language.
\end{lemma}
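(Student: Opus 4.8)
The plan is to realize $w$ by a single nonsingular trajectory, pass to the development $\D_P(w)$ where this trajectory becomes a straight segment crossing the developed edges in their interiors, and then argue that this same straight segment continues to cross the correspondingly developed edges for every sufficiently small perturbation of $P$.

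First I would use Corollary \ref{coro:language_clarification} to obtain a trajectory $\tau$ realizing $w = E_1 \ldots E_n$; by Lemma \ref{lem:neighborhood_in_corr}, together with the observation that the corridor of a finite word has positive width, I may take $\tau$ nonsingular. Developing $P$ along $w$, the trajectory $\tau$ becomes a straight segment $L \subset \D_P(w)$ that meets the developed edges $E_1, \ldots, E_n$ transversally, in their interiors, and in this order. Because $\tau$ is nonsingular inside the development, there is a positive margin $\delta > 0$: every crossing point of $L$ with a developed edge lies at distance at least $\delta$ from every vertex of $\D_P(w)$, and $L$ itself stays at distance at least $\delta$ from every developed vertex.

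Next I would control how the development deforms under a perturbation. For $P' = P(0,1,t_3,\ldots,t_n) \in P_\varepsilon$, the development $\D_{P'}(w)$ has the same combinatorial type as $\D_P(w)$: it is assembled from the same finite sequence of reflections $r_1, \ldots, r_n$, glued along the edges named by $w$. Each reflection is an isometry that depends continuously on the edge it fixes, and hence continuously on the data $(t_3,\ldots,t_n)$; since $w$ has finite length, only finitely many such reflections are composed. Therefore the positions of all $n+1$ developed copies --- and in particular of every developed vertex and edge --- depend continuously on $(t_3,\ldots,t_n)$, with the first copy anchored by $z_1 = 0$, $z_2 = 1$. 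Consequently there is a constant $C = C(n)$ such that moving each $t_i$ by less than $\varepsilon$ displaces every developed vertex and edge of $\D_{P'}(w)$ by less than $C\varepsilon$ from its position in $\D_P(w)$. I expect this uniform continuity estimate to be the main obstacle, since it requires tracking how a perturbation propagates (and is amplified) through the chain of reflections; the point that tames it is the finiteness of $w$, which bounds the number of reflections and so keeps $C(n)$ finite.

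Finally, I would choose $\varepsilon$ so small that $C(n)\varepsilon < \delta/2$. Then for every $P' \in P_\varepsilon$ the developed edges of $\D_{P'}(w)$ lie within $\delta/2$ of those of $\D_P(w)$, so the same straight segment $L$ still crosses each of them transversally and in its interior, avoiding all perturbed vertices and in the same order --- transversality and interior-crossing are open conditions preserved by a displacement smaller than the margin $\delta$. Folding the development back up, $L$ projects to a billiard trajectory on $P'$ that realizes $w$, and hence $w \in \L_{P'}$ by Corollary \ref{coro:language_clarification}. Since $P' \in P_\varepsilon$ was arbitrary, this proves the lemma.
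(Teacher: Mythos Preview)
Your proposal is correct and follows essentially the same approach as the paper: realize $w$ by a trajectory in the development $\D_P(w)$ with a positive margin $\delta$ from every developed vertex, use continuity of the developed vertices as functions of the perturbation parameters to find $\varepsilon$, and observe that the same straight segment still realizes $w$ in $\D_{P'}(w)$ for all $P' \in P_\varepsilon$. The only cosmetic differences are that the paper phrases the margin via a sub-corridor $B' \subset B$ and argues via ``$B'$ partitions the vertices of $P_i'$ the same way as those of $P_i$,'' whereas you phrase it directly as transversality and interior-crossing being open conditions; and the paper asserts the developed vertex coordinates are \emph{affine} in the $z_i$, while you (more cautiously, and adequately for the argument) invoke only continuity of the composed reflections.
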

\begin{proof}
Let the given word $w$ have length $n$ and let it equal $(E_i)_{i=1}^n$. Put $P$ in the coordinate plane so that $P=P(z_1, \ldots, z_m)$. Take the development $\D_P(w)$ and denote the reflected copies of $P$ as $P_0$=$P, P_1, \ldots, P_n$.
Let $\tau=\tau(p,\theta)$ be a trajectory that realizes $w$ in $\D_P(w)$,
and consider the corridor $B$ in $\D_P(w)$
about the trajectory $\tau$. We may take $B'\subset B$ so that $B'$ contains $\tau$ and so that
\[\left| (B'\cap P_i) -
v \left( P_i \right) \right| >\delta\]
for some $\delta >0$ and for all $i\in\{ 0,\ldots,n \}$, where $v(P)$ denotes the vertex set of a polygon $P$. That is, we may create a buffer $B'$ around $\tau$ so that within any development copy of $P$, $\tau$ is definitely at least $\delta$ away from every vertex of that particular copy. See the top of Figure \ref{fig:perturbation}.

\begin{figure}[!ht]
\centering
\includegraphics[width=4.5in]{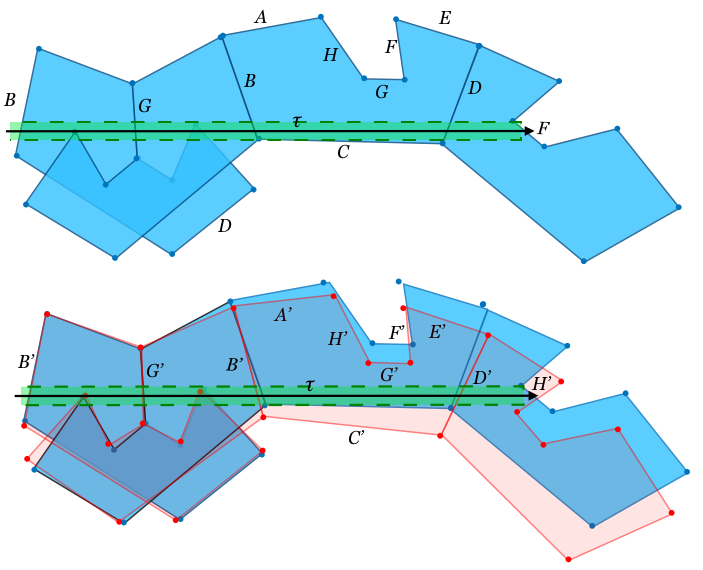}
\caption{Top: A trajectory (thick black) on a polygon $P$ (blue) realizing the finite bounce word \mbox{$w=BGBD$}. The four copies of $P$ shown comprise $\D_P(w)$. Bottom: A trajectory on a nearby polygon $P'$ (pink) that also realizes $w$. The four copies of $P'$ comprise $\D_{P'}(w)$. Note that the trajectory does not realize the longer word $BGBDF$ in $P'$; to realize this long word with this trajectory in a perturbation of $P$, a smaller value of $\varepsilon$ would have to be taken.}
\label{fig:perturbation}
\end{figure}

The coordinates of the vertices of each $P_i$ are functions of the coordinates of the vertices of $P$. More specifically, since the vertices of the $P_i$ are obtained by iterated reflections of the vertices of $P$, each coordinate of each vertex of $P_i$ is some affine function of the coordinates of the vertices of $P$, thought of as points $z_i = (x_i, y_i) \in \R^2$. In particular, all of these functions are continuous. This implies that there exists a value $\varepsilon$ so that varying the vertices of $P$ by less than $\varepsilon$ guarantees that the corresponding vertices of these perturbed $P'_i$ will vary from those of $P_i$ by less than $\delta$. In other words, there exists an $\varepsilon>0$ such that $P'_i \in (P_i)_\delta$ for all $P' \in P_\varepsilon$ (see the bottom of Figure \ref{fig:perturbation}).

Fix $B'$ in place and consider the families $P_\varepsilon$ and $P'_{\delta}$. Since each vertex of $P'_i$ is at most $\delta$ from its corresponding vertex in $P_i$, the vertices of $P'_i$ are disjoint from $B' \cap P'_i$ for all $i$. Further, when we restrict to any $P'_i$, $B'$ partitions the vertices of $P'_i$ into two sets --- those on one side of $\tau$ and those on the other --- in just the same way that $B'$ partitions the corresponding vertices of $P_i$.
This means that the trajectory $\tau$ within $B'$ hits the same sequence of edges in $\D_{P'}(w)$ as it does in $\D_P(w)$, namely $w$. Therefore $\tau$ also realizes $w$ in $P'$. Since this is true for every polygon in $P_\varepsilon$, the result follows.\end{proof}

We now apply Lemma \ref{lem:words_persist} to prove the main theorem of this section.

\begin{theorem}
\label{thm:no_finite}
Let $P$ be a polygon and $W$ a finite collection of finite words in its bounce language $\L_P$. Then there exists an $\varepsilon>0$ such that for all $Q\in P_\varepsilon$, $W\subset \L_Q$.
\end{theorem}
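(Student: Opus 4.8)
The plan is to reduce the finite-collection statement directly to the single-word case already established in Lemma \ref{lem:words_persist}. The key point is that the $\varepsilon$--neighborhoods are nested: by Definition \ref{def:local_chart}, if $\varepsilon \le \varepsilon'$ then $P_\varepsilon \subseteq P_{\varepsilon'}$, since the condition $|t_i - z_i| < \varepsilon$ is stronger than $|t_i - z_i| < \varepsilon'$. Because $W$ is a \emph{finite} collection, I can handle each word separately and then pass to the smallest of the resulting thresholds without losing positivity.

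Concretely, I would argue as follows. Since each $w \in W$ lies in $\L_P$, Lemma \ref{lem:words_persist} furnishes a value $\varepsilon_w > 0$ such that $w \in \L_Q$ for every $Q \in P_{\varepsilon_w}$. Set
\[
\varepsilon = \min_{w \in W} \varepsilon_w.
\]
As $W$ is finite and each $\varepsilon_w > 0$, this minimum is attained and strictly positive. Now fix any $Q \in P_\varepsilon$. For each $w \in W$ we have $\varepsilon \le \varepsilon_w$, so by the nesting of neighborhoods $Q \in P_\varepsilon \subseteq P_{\varepsilon_w}$, and hence $w \in \L_Q$. As this holds for every $w \in W$, we conclude $W \subseteq \L_Q$, which is exactly the claim.

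I do not expect a genuine obstacle here: the analytic content of the result---the continuity of the vertex-placement functions under reflection and the existence of a buffered sub-corridor $B'$ keeping the trajectory uniformly away from every vertex---has already been absorbed into Lemma \ref{lem:words_persist}. The only thing that must be checked at this stage is that taking a minimum over a finite index set preserves strict positivity, which is precisely where the finiteness of $W$ is used; an infinite collection could drive $\inf_w \varepsilon_w$ to $0$, and indeed the angle-reconstruction results of \S\ref{sec:angles} show that infinitely much information \emph{can} pin down the polygon. Thus the proof is a short termwise application of the lemma followed by this finite-minimum argument.
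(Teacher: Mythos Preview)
Your proposal is correct and matches the paper's own proof essentially verbatim: apply Lemma~\ref{lem:words_persist} to each $w_i \in W$ to obtain $\varepsilon_i > 0$, set $\varepsilon = \min_i \varepsilon_i$, and use the nesting $P_\varepsilon \subseteq P_{\varepsilon_i}$ to conclude. The only addition you make is the explicit remark that finiteness of $W$ is what guarantees $\varepsilon > 0$, which is implicit in the paper's version.
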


\begin{proof}
By Lemma \ref{lem:words_persist}, for each word $w_i \in W$ there is an $\varepsilon_i$ so that every $Q$ in $P_{\varepsilon_i}$ has $w_i$ in its bounce language. Let $\varepsilon$ equal the minimum of these $\varepsilon_i$. Then every $Q$ in $P_{\varepsilon}$ has $W$ as a subset of its bounce language.
\end{proof}

Rephrasing this in the language of the introduction, we have:

\begin{theorem}
A polygon $P$ cannot be reconstructed from any finite subset of its bounce language $\L_P$
\end{theorem}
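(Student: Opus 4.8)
The plan is to obtain this as an immediate consequence of Theorem \ref{thm:no_finite} together with the observation that $\varepsilon$--neighborhoods in the moduli space $\mathcal{P}_n$ are full-dimensional. The entire geometric content---that a finite collection of bounce words survives a small perturbation of the table---has already been packaged into Theorem \ref{thm:no_finite} (via the persistence Lemma \ref{lem:words_persist}), so what remains is only to translate its conclusion into a statement about non-reconstructibility.

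First I would take an arbitrary finite subset $W \subset \L_P$. By Corollary \ref{coro:language_clarification}, every word in $W$ is realized by a genuine trajectory, so $W$ is a finite collection of finite words in the bounce language to which Theorem \ref{thm:no_finite} applies. Applying that theorem yields a single $\varepsilon > 0$ such that $W \subset \L_Q$ for every $Q \in P_\varepsilon$. Thus one fixed $\varepsilon$ works simultaneously for all of $W$, and every polygon in the $\varepsilon$--neighborhood of $P$ shares the entire collection $W$ in its bounce language.

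Next I would invoke the remark following Definition \ref{def:local_chart}, that $P_\varepsilon$ is a full-dimensional subset of $\mathcal{P}_n$. Since the local coordinates on $\mathcal{P}_n$ already quotient out the similarity action---we have normalized $z_1 = 0$, $z_2 = 1$, and $\operatorname{Im}(z_3) \ge 0$---distinct points of $P_\varepsilon$ represent genuinely non-similar polygons, not merely rotated or rescaled copies of $P$. Hence $P_\varepsilon$ contains (uncountably many) polygons $Q$ that differ in shape from $P$, each satisfying $W \subset \L_Q$. No such $W$ can therefore distinguish $P$ from all of its neighbors in $P_\varepsilon$, which is exactly the assertion that $P$ cannot be reconstructed from any finite subset of $\L_P$.

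The substantive work has all been done upstream in Theorem \ref{thm:no_finite}, so there is no serious analytic obstacle left; the only point demanding care is the moduli-space bookkeeping of the last paragraph, namely verifying that the chosen normalization genuinely removes the similarity ambiguity, so that the full-dimensional neighborhood truly contains pairwise non-similar shapes rather than redundant representatives of $P$ itself. Once this is granted, the impossibility is immediate.
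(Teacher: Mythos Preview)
Your proposal is correct and matches the paper's approach exactly: the paper presents this theorem simply as a rephrasing of Theorem~\ref{thm:no_finite}, with no further argument, and you have merely spelled out the translation step (that $P_\varepsilon$ is full-dimensional and consists of pairwise non-similar shapes) which the paper leaves implicit from the setup in \S\ref{subsec:moduli}.
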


\section{Future work: reconstructing lengths}\label{sec:lengths}

In \cite{DELS}, the authors show that $\B(P)$ is a complete invariant of  a polygonal billiard table, up to some elementary equivalence relations. Our Theorems \ref{thm:adj} and \ref{thm:hearing_angles} show how to reconstruct the adjacency of edges and sizes of angles of $P$ from $\B(P)$. To fully reconstruct $P$ from $\B(P)$, we would in addition need a method for reconstructing lengths of edges of $P$.

A na\"ive attempt to reconstruct lengths of a polygon from its bounce spectrum would be to simply measure the percentage of each letter (edge label) in the bounce spectrum, and then call this the length of the edge, where the polygon is normalized to have total perimeter 1.

\begin{question}\label{q:proportions}
Is it true that the length of each edge of a non-right-angled polygon, as a proportion of the total perimeter, is the same as the proportion of the corresponding edge label in the bounce spectrum?
\end{question}

As shown in Lemma \ref{lem:uncountable}, the bounce spectrum is a very large object, so it would be more desirable to be able to recover the lengths of edges from a \emph{single} bounce sequence. However, in general a single bounce sequence does not encode all edge length information. For example, if the table has rational angles, a trajectory travels in only finitely many directions, and this will bias the density of points hit on the edges. In a square billiard table, a trajectory of slope $\pi$ is aperiodic, equidistributes on the table, and hits each edge in a dense set of points. However, the trajectory hits the top and bottom edges $\pi$ times as often as the left and right edges, even though they have the same length.

It is possible that for irrational-angled billiard tables, computing these proportions for a single aperiodic trajectory would yield the length percentages.

\begin{question}
Consider a polygon with at least one irrational angle and an aperiodic trajectory that equidistributes on the billiard table. Let the coding of this trajectory be $(E_i)_{i \in \Z}$. Is the length of each edge of the polygon, as a proportion of the total perimeter, the same as the proportion of the corresponding edge label in $(E_i)$?
\end{question}

An explicit example or counterexample of such a table and trajectory would be interesting.

\newpage

\pagenumbering{roman}

\newpage

\appendix

\section{Another worked example}\label{app:effective_angles}
In Proposition \ref{prop:hear_3pi/2_gen}, we explicitly gave the bounce sequences for a vertex with angle $3\pi/2$. We will now do the same for angle $5\pi/12$. Figure \ref{fig:5pi12} shows the unfolding of this angle, which requires $2q=24$ copies of the corner. It also shows the $2p=10$ trajectories needed to characterize the angle, shown in blue for one direction and in red for the opposite direction.

\begin{figure}[!h]
\centering
\includegraphics[width=0.8\textwidth]{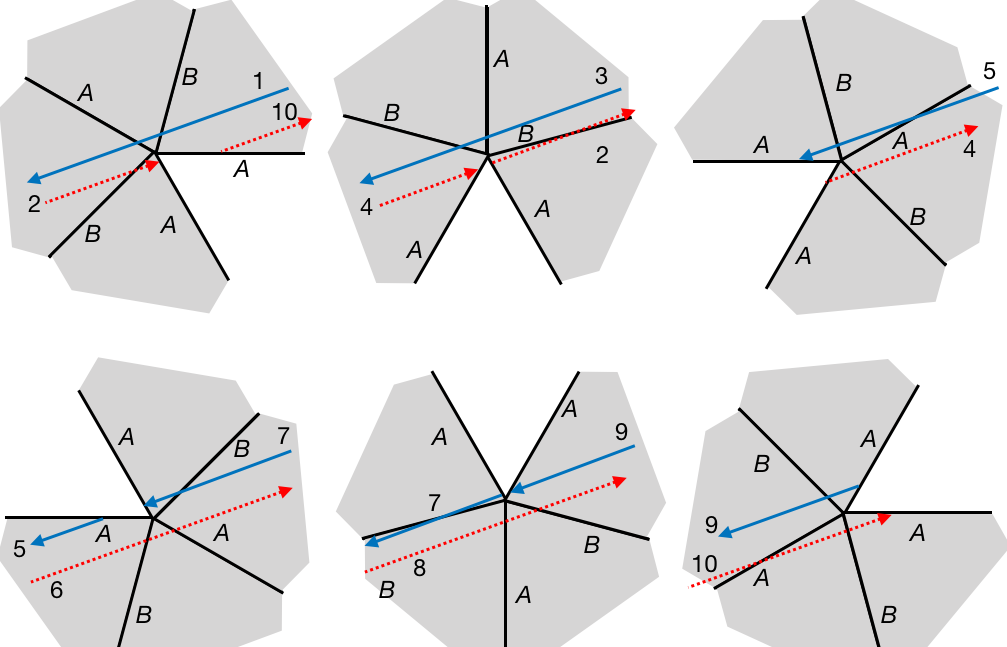}
\caption{An unfolded angle of $5\pi/12$ around the vertex, yielding $4\cdot 6 = 24$ copies for a total angle of $5\cdot 2\pi$. We use the ten nearby parallel trajectories $\tau_1, \ldots, \tau_{10}$ (blue for odd, red for even) and their associated bounce sequences to uniquely characterize the vertex angle as $5\pi/12$.}
\label{fig:5pi12}
\end{figure}

The bounce words corresponding to the $10$ limiting trajectories are given in \eqref{eq:5pi12}. Each of these words is in the bounce language $\L$, for all values of $n$.

\begin{equation}
\begin{split}\label{eq:5pi12}
  E^1_{-n}\  \ldots\ E^1_{-2}\  E^1_{-1}\  & \ \ B A\ \  \ E^1_1\  E^1_2\  \ldots\  E^1_n  \\
  E^1_n\  \ldots\ E^1_{2}\  E^1_{1}\  &BA B\  E^2_{-1}\  E^2_{-2}\  \ldots\  E^2_{-n} \\
  E^2_{-n}\  \ldots\ E^2_{-2}\  E^2_{-1}\  &\ \ AB\ \ \  E^2_1\  E^2_2\  \ldots\  E^2_n \\
  E^2_n\  \ldots\ E^2_{2}\  E^2_{1}\  &\ \ A B \ \ \ E^3_{-1}\  E^3_{-2}\  \ldots\  E^3_{-n}\\
  E^3_{-n}\  \ldots\ E^3_{-2}\  E^3_{-1}\  &A B A\  E^3_1\  E^3_2\  \ldots\  E^3_n \\
  E^3_n\  \ldots\ E^3_{2}\  E^3_{1}\  & \ \ B A\ \ \  E^4_{-1}\  E^4_{-2}\  \ldots\  E^4_{-n} \\
  E^4_{-n}\  \ldots\ E^4_{-2}\  E^4_{-1}\  &B A B \ E^4_1\  E^4_2\  \ldots\  E^4_n  \\
  E^4_n\  \ldots\ E^4_{2}\  E^4_{1}\  &\ \  A B\ \ \  E^5_{-1}\  E^5_{-2}\  \ldots\  E^5_{-n} \\
  E^5_{-n}\  \ldots\ E^5_{-2}\  E^5_{-1}\  &  \ \ AB\ \ \   E^5_1\  E^5_2\  \ldots\  E^5_n \\
  E^5_n\  \ldots\ E^5_{2}\  E^5_{1}\  &A B A\ E^1_{-1}\  E^1_{-2}\  \ldots\  E^1_{-n}\\
\end{split}
\end{equation}

\newpage


\begin{thebibliography}{9}

\bibitem{BL}Anja Bankovic, Chris Leininger: Marked-length-spectral rigidity for flat metrics. Trans. of the Amer. Math. Soc. {\bf 370} (3), 1867-1884 (2018).

\bibitem{CodeOrder}Jozef Bobok, Serge Troubetzkoy: Code and order in polygonal billiards. Topology Appl. {\bf 159}, 236-247 (2012).

\bibitem{BobTroub}Jozef Bobok, Serge Troubetzkoy: Homotopical rigidity of polygonal billiards. Topology Appl. {\bf 173}, 308-324 (2014).

\bibitem{BonahonCounter}Francis Bonahon: Surfaces with the same marked length spectrum. Topology Appl. {\bf 50}, 55-62 (1993).

\bibitem{Bonahon}Francis Bonahon: The geometry of Teichm{\"u}ller space via geodesic currents. Invent. Math. {\bf 92} (1), 139-162 (1988).

\bibitem{Calderon}Aaron Calderon: Nonregularity of bounce languages, \emph{in preparation}.

\bibitem{Croke}Christopher Croke: Rigidity for surfaces of nonpositive curvature. Comment. Math. Help. {\bf65} (1), 150--169 (1990).

\bibitem{CFF}Christopher Croke, Albert Fathi, Joel Feldman: The marked length--spectrum of a surface of nonpositive curvature. Topology. {\bf 31} (4), 847–855 (1992).

\bibitem{Davis1} Diana Davis: Cutting sequences, regular polygons, and the Veech group. Geometriae Dedicata, {\bf 162}, 231--261 (2013).

\bibitem{Davis2} Diana Davis: Cutting sequences on translation surfaces. New York Journal of Mathematics, {\bf 20}, 399--429
(2014).

\bibitem{flatsurfaces}Diana Davis: {Lines in positive genus: An introduction to flat surfaces}, in EMS Series of Lectures in Mathematics, {Dynamics done with your bare hands}, 1--55 (2016).

\bibitem{DPU}Diana Davis, Irene Pasquinelli, Corinna Ulcigrai: Cutting sequences on Bouw--M\"oller surfaces: an $\mathcal{S}$-adic characterization. Ann. Sci. de l'ENS. \emph{To appear} (2018).

\bibitem{DELS} Moon Duchin, Viveka Erlandsson, Chris Leininger, Chandrika Sadanand: You can hear the shape of a billiard table. {\it In preparation} (2018).

\bibitem{DLR}Moon Duchin, Chris Leininger, Kasra Rafi: Length spectra and degeneration of flat metrics. Invent. Math. {\bf 182}, 231--277 (2010).

\bibitem{Fathi}Albert Fathi: Le spectre marqu{\'e} des longueurs des surfaces sans points conjugu{\'e}es. C. R. Acad. Sci. Paris S{\'e}r. I. Math. {\bf 309} (9), 621--624 (1989).

\bibitem{GKT}G. Galperin, T. Kruger, S. Troubetzkoy, Local instability of orbits in polygonal and polyhedral billiards. {\em Communications in Mathematical Physics}, {\bf 169}, 463--473 (1995).

\bibitem{GWWdrum}Carolyn Gordon, David Webb, Scott Wolpert: One cannot hear the shape of a drum. Bull. Amer. Math. Soc. {\bf 27}, 134--138 (1992).

\bibitem{6g-5}Ursula Hamenst{\"a}dt: Length functions and parameterizations of Teichm{\"u}ller space for surfaces with cusps. Ann. Acad. Sci. Fenn. Math. {\bf{28}}(1), 75–88 (2003)

\bibitem{drum}Mark Kac: Can one hear the shape of a drum? American Mathematical Monthly {\bf 73}, 1--23 (1966).

\bibitem{KMS}Steven Kerckhoff, Howard Masur, John Smillie: Ergodicity of billiard flows and quadratic differentials. Ann. of Math. {\bf 124}, 293-311 (1986).


\bibitem{Masur_periodicdense}Howard Masur: Closed trajectories for quadratic differentials with an application to billiards. Duke Math. J. {\bf 53} (2), 307--314 (1986)

\bibitem{MT} Howard Masur and Sergei Tabachnikov:
Rational billiards and flat structures.
In Handbook of Dynamical Systems, North-Holland, {\bf 1A}, 1015--1089 (2002).

\bibitem{MH} Marston Morse and Gustav A. Hedlund: Symbolic Dynamics. American Journal of Mathematics, {\bf 60}(4):815--
866 (1938).

\bibitem{Otal} J.P. Otal: Le spectre marqu{\'e} des longueurs des surfaces {\`a} courbure n{\'e}gative. Ann. Math. (2). {\bf 131}(1), 151--162 (1990).

\bibitem{100deg} Richard Evan Schwartz, Obtuse Triangular Billiards II: 100 Degrees Worth of Periodic Trajectories. Journal of Experimental Math {\bf 18} (2), 137--171 (2008).

\bibitem{SU1} John Smillie, Corinna Ulcigrai: Geodesic flow on the Teichm\"uller disk of the regular octagon, cutting sequences and octagon continued fractions maps. Dynamical Numbers: Interplay between Dynamical Systems and Number Theory,
Contemporary Mathematics, {\bf 532}, 29--65 (2010).

\bibitem{SU2}John Smillie, Corinna Ulcigrai: Beyond Sturmian sequences: coding linear trajectories in the regular octagon. Proc. London Math. Soc. {\bf 102} (3), 291--340 (2011).

\bibitem{Sunada}Toshikazu Sunada: Riemannian coverings and isospectral manifolds. Ann. Math. {\bf 121} (1), 169--186 (1985).

\bibitem{Troubetzkoy}Serge Troubetzkoy, {\em Personal communication}.

\bibitem{Vorobets}Yaroslav Vorobets: Ergodicity of billiards in polygons. Sb. Math. {\bf 188} (3), 389-434 (1997).

\bibitem{Zelditch}Steve Zelditch: Inverse spectral problem for analytic domains, II: $\Z_2$-symmetric domains. Ann. of Math. (2). {\bf 170} (1), 205-269 (2009).


\end{thebibliography}
\end{document}